\newtheorem{theorem}{Theorem}[section]
\newtheorem{proposition}{Proposition}[section]
\newtheorem{lemma}{Lemma}[section]
\newtheorem*{remark}{Remark}
\newtheorem{corollary}{Corollary}[section]
\newtheorem{definition}{Definition}[section]
\newcommand{\R}{\mathbb R}
\newcommand{\1}{\mathds{1}}
\newcommand{\osc}{\mbox{osc}}
\numberwithin{equation}{section}
\title{Global wellposedness for the 3D Muskat problem with medium size slope.}
\author{Stephen Cameron}
\address{Courant Institute, New York University, New York, NY 10012}
\email{spc6@cims.nyu.edu}
\begin{document}

\maketitle

\begin{abstract}
We prove the existence and uniqueness of global, classical solutions to the 3D Muskat problem in the stable regime whenever the initial interface has sublinear growth and slope $||\nabla_x f_0||_{L^\infty}< 5^{-1/2}$.  We show under these assumptions that the equation is fundamentally parabolic, satisfying a comparison principle.  Applying the modulus of continuity technique, we show that rough initial data instantly becomes $C^{1,1}$ with the curvature decaying like $O(t^{-1})$.
\end{abstract}

\section{Introduction}

We consider the evolution of the interface between two immiscible, incompressible fluids in a three dimensional porous medium, i.e. the 3D Muskat problem.  When the fluids are of equal viscosity and the physical constants of the system (viscosity, gravity, permeability of the medium) are normalized, the fluid density, velocity, and pressure $(\varrho, u,P)$ satisfy the system of equations
\begin{equation}\label{e:pme}
\left\{\begin{array}{l} \partial_t \varrho + \nabla_X \cdot(u\varrho) = 0, 
\\ u +\nabla_X P + (0,0,\varrho)=0,
\\ \nabla_X \cdot u = 0, 
\\ \varrho(t,X) = \varrho_1 \1_{D(t)}(X) + \varrho_2 \1_{\R^3\setminus D(t)}(X), \end{array}\right.
 \qquad D(t)\subseteq \R^3, \quad (t,X)\in (0,\infty)\times \R^3.
\end{equation}

The stable regime occurs when the fluid domain $D(t)$ is of the form $$D(t) = \{X = (x,x_3)\in \R^3| x_3> f(t,x)\},$$ and the heavier fluid is below the lighter fluid (i.e., $\varrho_1 < \varrho_2)$.  In this case, the dynamics of \eqref{e:pme} can fully be described by the evolution of the interface $f(t,x)$.  Normalizing the mass of the fluids so that $\varrho_2- \varrho_1=4\pi$, it can be shown that the interface $f$ solves 
\begin{equation}
\partial_t f(t,x) = \int\limits_{\R^{2}} \frac{ (\nabla_x f(t,y)- \nabla_x f(t,x))\cdot (y-x)}{((f(t,y)-f(t,x))^2 + |y-x|^2)^{3/2}} dy.
\end{equation}
See \cite{Localwellpose} for a detailed derivation of this equation.  Note that the integral on the right hand side of \eqref{e:fequation} is taken in the principle value sense around $y=x$ and $\infty$.  With a little integration by parts, it can be shown that this equation is equivalent to 
\begin{equation}\label{e:fequation}
\partial_t f(t,x) = \int\limits_{\R^{2}} \frac{f(t,y)-f(t,x) - \nabla_x f(t,x)\cdot (y-x)}{((f(t,y)-f(t,x))^2 + |y-x|^2)^{3/2}} dy,
\end{equation}
which will be more useful for our purposes.  Linearizing \eqref{e:fequation} around a flat interface gives the fractional heat equation, 
\begin{equation}
\partial_t f(t,x) = -(-\Delta_x)^{1/2}f(t,x),
\end{equation}
showing the parabolic nature of the problem for small data, though for large data the nonlinearity becomes highly nontrivial.  

The problem was first proposed by Muskat \cite{Muskat} in a study of the encroachment of water into oil in a tar sand, and in 2D is equivalent to 
a Hele-Shaw cell \cite{Hele}.  The problem was first shown to be locally wellposed in $H^k$ for $k\geq 4$ in \cite{Localwellpose}.  Classical solutions were shown to satisfy maximum principles in both $L^\infty$ \cite{SlopeMax} and $L^2$ \cite{3DSlope}, though neither of these results imply a direct gain in regularity.  

While \eqref{e:fequation} is locally wellposed, global wellposedness is false in general for large initial data.  It was shown in \cite{Breakdown} that wave turning can occur, causing the Rayleigh-Taylor condition to break down in finite time.  That is, there is a smooth solution $f$ to \eqref{e:fequation} and time $T<\infty$ such that 
\begin{equation}
\lim\limits_{t\to T-} ||\nabla_x f||_{L^\infty_x}(t) = \infty, 
\end{equation}
after which the interface between the two fluids is no longer parameterized by the graph of a function.  Once the free boundary leaves the stable regime, it remains smooth for small times due to its initial regularity at time $T$.  However it was shown that this regularity can breakdown in \cite{Blowup} with an example of blowup in $C^4$.  The behavior of the interface once wave turning occurs is complicated in general, as the interface can shift between the stable and unstable regimes multiple times before any regularity breakdown \cite{StabilityShift, StabilityShift2}.  

There has been a large amount of study of what conditions on initial data can guarantee global existence and regularity for solutions to the Muskat problem, particularly in 2D (one dimensional interface).  As solutions to the Muskat problem are preserved under the geometric rescaling $\lambda^{-1}f(\lambda t, \lambda x)$, this has typically taken the form of ``medium-size" upper bounds in scaling invariant norms.  

In 2D, \cite{Globalold} shows a global classical solution exists when the initial data $f_0\in H^3(\R)$ with $||f_0||_1 = ||\ |\xi| \hat{f}_0(\xi)||_{L_\xi^1(\R)} $ less than some explicit constant, which was improved to $\approx 1/3$ in \cite{3DSlope}.  In this case \cite{Decay} proves optimal decay estimates on the norms $||f(t,\cdot)||_s = ||\ |\xi|^s \hat{f}(t,\xi)||_{L_\xi^1}$, matching the estimates for the fractional heat equation.  Under the weaker assumption that $||f'_0||_{L^\infty}<1$, \cite{Globalold} also showed that a maximum principle holds for the slope and global Lipschitz weak solutions exist.  The authors of \cite{Monotonic} were also able show a maximum principle for the slope and the existence of global weak solutions as well, but under the assumption that the initial data $f_0$ was monotonic rather than slope less than 1.  Using a reformulation of \eqref{e:fequation} and a number of Besov space estimates, \cite{H3/2} develops a $\dot{H}^{3/2}$ critical theory for the Muskat problem under a bounded slope assumption.  The authors of \cite{ConstantinMain} made great progress towards proving global regularity by improving the existing continuation criteria from the $C^{2,\delta}$ established in \cite{Localwellpose} to $C^1$.  That is, they proved that if the initial data $f_0\in H^k(\R)$, then the solution $f$ will exist and remain in $H^k$ so long as the slope $f'(t,\cdot)$ remains bounded and uniformly continuous.  

In 3D, most of the medium sized data results in critical spaces can be found in \cite{3DSlope}.  They show that global classical solutions exist when the initial data $f_0$ satisfies $|| \ |\xi| \hat{f}||_{L^1_{\xi}(\R^2)}$ is less than some constant $k_0 \approx 1/5$.  Notably, \cite{ViscosityJump} was able to replicate this result even when the viscosity of the two fluids are distinct, which is the first medium sized data result in the viscosity jump case.  The authors of \cite{3DSlope} also prove a maximum principle for the slope and global weak solutions  whenever the initial slope $||\nabla_x f_0 ||_{L^\infty(\R^2)}$ is suitably bounded.  In the paper they state the theorem for $||\nabla_x f_0||_{L^\infty}< \displaystyle\frac{1}{3}$, but a careful reading of their proof of Theorem 4.1 shows that this holds in fact for the improved bound $||\nabla_x f_0||_{L^\infty} < \displaystyle\frac{1}{\sqrt{5}}$ .  

In our previous work on the 2D case \cite{MuskatMe}, we were able to prove global wellposedness whenever the initial data $f_0\in W^{1,\infty}(\R)$ satisfies 
\begin{equation}
\left(\sup\limits_{x\in \R} f_0'(x)\right)\left(\sup\limits_{y\in \R}-f_0'(y)\right) < 1,
\end{equation}
which is an angular condition that interpolates between the slope less than 1 assumption of \cite{Globalold} and the monotonicity assumption of \cite{Monotonic}.  Under that bound, we showed that the slope satisfied a parabolic equation with maximum principle.  Inspired by the proof of global wellposedness for the critical surface quasi-geostraphic (SQG) equation in \cite{Kiselev}, we then show that this equation generates a Lipschitz modulus of continuity for the slope.  Using these a priori estimates and the continuation criteria established in \cite{ConstantinMain}, we were thus able to get global classical solutions.  

In this work, we extend our results from 2D to 3D proving

\begin{theorem}\label{t:main}
Let $f_0\in \dot{W}^{1,\infty}(\R^{2})$ with $||\nabla_x f_0||_{L^\infty} < \displaystyle\frac{1}{\sqrt{5}}$, and assume that $f_0$ has uniform, integrable sublinear growth.  I.e., there exists some nonnegative function $\Omega: [0,\infty)\to [0,\infty)$ such that 
\begin{equation}
\sup\limits_x \max\limits_{|h|\leq R} |f_0(x+h)-f_0(x)|\leq \Omega(R), \qquad \int\limits_{1}^\infty \frac{\Omega(R)}{R^2}dR <\infty.
\end{equation}
Then there exists a unique, classical solution $f$ to \eqref{e:fequation} with 
\begin{equation}\displaystyle f\in C^{1,\alpha}_{loc}((0,\infty)\times \R^{2})\cap L^\infty_{loc}((0,\infty); C^{1,1}), \qquad \lim\limits_{t\to 0+} ||f(t,\cdot)-f_0(\cdot)||_{L^\infty(\R^{2})} = 0.
\end{equation}
For all times $t$, the solution $f$ satisfies the uniform growth bounds
\begin{equation}
\sup\limits_x \max\limits_{|h|\leq R}|f(t,x+h)-f(t,x)| \leq \Omega(R),
\end{equation}
All directional derivatives $\nabla_x f(t,x)\cdot e$ obey the maximum principle, and 
\begin{equation}\label{e:mainresult}
|\nabla_x f(t,x)-\nabla_x f(t,y)| \leq C(||\nabla_x f_0||_{L^\infty})\frac{|x-y|}{t}, \quad t>0, x,y \in \R^{2}.
\end{equation}
\end{theorem}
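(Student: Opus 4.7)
The plan is to follow the scheme the author developed in the 2D setting \cite{MuskatMe}: first establish a comparison principle for \eqref{e:fequation} together with a maximum principle for directional derivatives of $f$, then propagate a Lipschitz modulus of continuity for $\nabla_x f$ via a Kiselev-Nazarov-Volberg type touching argument, and finally combine these a priori bounds with the $C^1$ continuation criterion of \cite{ConstantinMain} to obtain global classical solutions. The sublinear growth assumption on $f_0$ plays the role that boundedness or decay would in a simpler setting, ensuring that the nonlocal integrals converge uniformly and that approximating sequences pass to the limit.

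\textbf{Parabolic structure.} Under the hypothesis $\|\nabla_x f\|_{L^\infty} < 1/\sqrt{5}$, I would rewrite the right hand side of \eqref{e:fequation} as a nonlocal linear operator against $f$ with positive, solution-dependent kernel comparable to $|y-x|^{-3}$, that is, a variable-coefficient fractional Laplacian of order one. This yields simultaneously a comparison principle between two solutions whose difference vanishes at infinity and a maximum principle for each directional derivative $\nabla_x f \cdot e$, obtained by differentiating \eqref{e:fequation} in $e$ and testing at an extremum. The uniform sublinear growth modulus $\Omega$ is preserved for the same reason: it encodes a family of $L^\infty$ comparisons between translates $f(\cdot+h)$ and $f$. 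This step is essentially the sharpened reading of Theorem~4.1 of \cite{3DSlope} alluded to in the introduction; the threshold $1/\sqrt{5}$ is precisely what is needed to keep the drift contribution from the nonlinearity dominated by the available dissipation.

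\textbf{Modulus of continuity for $\nabla_x f$.} This is the analytic heart of the proof and where the new 3D work is concentrated. Introduce the family of Lipschitz moduli $\omega_t(r) = C r/t$, capped at $2\|\nabla_x f_0\|_{L^\infty}$, and argue by contradiction: if the modulus of continuity of $\nabla_x f(t,\cdot)\cdot e$ is controlled by $\omega_t$ for all $t < t_0$ and every unit $e$, with equality first achieved at some $t_0$ at points $x\neq y$ and a direction $e$, then one computes $\partial_t[(\nabla_x f\cdot e)(\cdot,x) - (\nabla_x f\cdot e)(\cdot,y)]$ at $t_0$ using the linearized form of \eqref{e:fequation} and shows that it is strictly less than $\partial_t\omega_t(|x-y|)|_{t=t_0}$. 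I would split the resulting nonlocal integral into a near-field piece (radius $\lesssim|x-y|$) producing quantitative dissipation and a far-field piece controlled by $\omega_t$ itself, with the slope bound $1/\sqrt{5}$ used throughout to keep the variable-coefficient kernel comparable to the Euclidean fractional kernel. The main obstacle, compared with the 2D argument of \cite{MuskatMe}, is that the breaking configuration is now parameterized by two points in $\R^2$ and a direction, so the symmetrization that extracts dissipation must be genuinely two-dimensional and must handle the interaction between $e$ and the separation $y-x$; quantitatively beating the drift at slopes close to the threshold $1/\sqrt{5}$ is where most of the real work lies.

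\textbf{Construction and global classical solution.} For existence I would regularize $f_0$ by mollification together with a compactly supported truncation, solve the regularized problem using the local $H^k$ theory of \cite{Localwellpose}, and propagate the uniform slope, growth, and $C^{1,1}$ bounds from the previous two steps through the approximation; the sublinear growth assumption gives tightness and convergence in $L^\infty_{loc}$, and uniqueness comes from the comparison principle of the first step. Once the limit solution lies in $L^\infty_{loc}((0,\infty); C^{1,1})$ its slope is bounded and uniformly continuous, so the $C^1$ continuation criterion of \cite{ConstantinMain} upgrades it to a classical solution that persists for all time, yielding the $C^{1,\alpha}_{loc}$ regularity and the decay bound \eqref{e:mainresult} asserted in the statement.
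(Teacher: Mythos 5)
Your overall architecture (comparison/maximum principle, a Kiselev-type breakthrough argument for a modulus of continuity of $\nabla_x f$, then approximation plus a continuation criterion) matches the paper, but two of your concrete choices would fail as written.

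First, the modulus. You propose to propagate $\omega_t(r)=Cr/t$ capped at $2\|\nabla_x f_0\|_{L^\infty}$, i.e.\ a piecewise linear, concave modulus. This cannot close the breakthrough argument. On the linear part the local dissipation vanishes identically, since $\delta_r\omega(|\xi|)+\delta_{-r}\omega(|\xi|)=0$ for a linear $\omega$, and the far-field dissipation only contributes $\sim -\lambda\,\omega(|\xi|)/r_*$ where $r_*$ is the cap radius; meanwhile the asymmetry/continuity error terms at a crossing point of separation $|\xi|$ are of size $\omega'(|\xi|)\bigl(\int_0^{|\xi|}\omega(r)r^{-1}dr+|\xi|\int_{|\xi|}^\infty\omega(r)r^{-2}dr\bigr)\sim t^{-2}|\xi|\log(r_*/|\xi|)$, which dominates the available dissipation as $|\xi|\to 0$. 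Moreover the very first step of the breakthrough argument needs $\omega''(0^+)=-\infty$ to exclude a first touching ``at the diagonal'' via the strict bound $\|D_x^2 f\|_{L^\infty}<\omega'(0)/t$; a linear modulus gives you no room there. The paper instead propagates the genuinely concave Kiselev modulus $\omega(\xi)=\xi-\xi^{3/2}$ for $\xi\le\delta$, with $\omega'(\xi)=\gamma/(\xi(4+\log(\xi/\delta)))$ beyond, whose local dissipation $\sim-\lambda|\xi|^{1/2}$ beats the logarithmic error, and it must also prove the strengthened inequality with right-hand side $-\omega'(|\xi|)\omega(|\xi|)$ (not just $<0$) so that the rescaled family $\overline{\omega}(\,\cdot\,/t)$ is actually \emph{generated} from rough data rather than merely propagated. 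The Lipschitz bound \eqref{e:mainresult} then follows because $\overline{\omega}$ is Lipschitz, not because the modulus you run the argument with is linear.

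Second, the continuation step. The $C^1$ continuation criterion of \cite{ConstantinMain} that you invoke is a result for the 2D Muskat problem ($f_0\in H^k(\R)$); it is not available in the 3D setting of this theorem, and your a priori bounds only reach $L^\infty_{loc}((0,\infty);C^{1,1})$, which is strictly weaker than the $C^{2,\delta}$ continuation criterion that \cite{Localwellpose} does provide in 3D. The paper bridges this gap by adding an artificial viscosity $\epsilon\Delta_x$: the regularized equation inherits all the $\epsilon$-independent a priori estimates, the heat part upgrades the resulting $C^\alpha$ right-hand side to a $C^{2,\alpha}$ bound (depending on $\epsilon$) so the $C^{2,\delta}$ criterion applies and the regularized solutions are global, and one then passes to the limit $\epsilon\to 0$ in $C^1_{loc}$ before approximating the rough initial data by smooth compactly supported truncations. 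Without either a 3D analogue of \cite{ConstantinMain} or this vanishing-viscosity detour, your construction of a global solution does not close.
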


Unlike in our previous work \cite{MuskatMe}, we are know able to handle solutions $f$ which are unbounded, so long as they only grow sublinearly.  These new growth bounds, along with the uniqueness and maximum principle for directional derivatives in Theorem \ref{t:main} are a direct consequence of 

\begin{theorem}\label{t:comparison} (Comparison Principle) 

Let $f,g: [0,\infty)\times \R^{d-1}\to \R$ be classical solutions of $d$-dimensional Muskat equation \eqref{e:dequation} with $|| \nabla_x f||_{L^\infty} , ||\nabla_x g ||_{L^\infty} < \displaystyle\frac{1}{\sqrt{2d-1}}$ and $f(0,x)\leq g(0,x)$.  Then $f(t,x)\leq g(t,x)$ for all $t\geq 0$.  
\end{theorem}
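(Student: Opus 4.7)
The plan is a maximum principle / contradiction argument applied to the difference $h := f - g$. Suppose for contradiction that $h$ is strictly positive somewhere at some positive time; I will locate a first contact point $(t_0, x_0)$ at which $h(t_0, x_0) > 0$ equals the spatial maximum and $\partial_t h(t_0, x_0) \geq 0$, $\nabla f(t_0, x_0) = \nabla g(t_0, x_0)$, and $f(t_0, y) - f(t_0, x_0) \leq g(t_0, y) - g(t_0, x_0)$ for every $y$. Since $f, g$ need not be bounded, attainment of the supremum requires a penalization: I would work with $h_{\delta, \eps}(t, x) := h(t, x) - \delta t - \eps \phi(x)$ for a smooth Lipschitz $\phi \geq 0$ growing to $+\infty$ at spatial infinity slowly enough not to spoil the nonlocal estimates. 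By continuity and the initial ordering, there is a first contact point at which the above maximum conditions hold up to $O(\eps)$ corrections, with $\partial_t f - \partial_t g \geq \delta$.

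At such a contact point, set $r = y - x_0$, $p = \nabla f(t_0, x_0)$, $\alpha(y) = f(t_0, y) - f(t_0, x_0)$, $\beta(y) = g(t_0, y) - g(t_0, x_0)$, and introduce the kernel
\[
\Phi_p(a, r) := \frac{a - p \cdot r}{(a^2 + |r|^2)^{d/2}}.
\]
The $d$-dimensional analog of \eqref{e:fequation} gives (up to a dimension-dependent normalization and $O(\eps)$ gradient corrections)
\[
\partial_t f(t_0, x_0) - \partial_t g(t_0, x_0) = \int_{\R^{d-1}} \bigl[\Phi_p(\alpha(y), r) - \Phi_p(\beta(y), r)\bigr]\, dy.
\]
A direct computation yields
\[
\partial_a \Phi_p(a, r) = \frac{|r|^2 - (d - 1) a^2 + d\, a\, p \cdot r}{(a^2 + |r|^2)^{d/2 + 1}},
\]
and, setting $m := \max(||\nabla f||_{L^\infty}, ||\nabla g||_{L^\infty}) < 1/\sqrt{2d - 1}$, whenever $|a| \leq m|r|$ and $|p| \leq m$ the numerator is bounded below by $|r|^2(1 - (2d - 1)m^2) > 0$. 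This positive lower bound is precisely where the threshold $1/\sqrt{2d - 1}$ is forced. Since every value in $[\alpha(y), \beta(y)]$ satisfies $|a| \leq m|r|$, $\Phi_p(\cdot, r)$ is strictly increasing on that interval with a uniform positive margin, so $\alpha \leq \beta$ pointwise (modulo the $O(\eps)$ penalty) yields $\partial_t f - \partial_t g < 0$ at $(t_0, x_0)$. This contradicts $\partial_t f - \partial_t g \geq \delta$; sending $\eps \to 0$ and then $\delta \to 0$ produces $h \leq 0$ identically.

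The principal obstacle is making the penalization rigorous in the unbounded setting. The max condition on $h_{\delta, \eps}$ yields $\alpha(y) - \beta(y) \leq \eps\,[\phi(y) - \phi(x_0)]$, which does \emph{not} imply $\alpha \leq \beta$ pointwise: on the region where $\phi(y) > \phi(x_0)$ one gets a positive correction to the integrand of order $\eps$ times a weighted integral of $(\phi(y) - \phi(x_0))$ against $\partial_a \Phi_p$. Ensuring this correction vanishes as $\eps \to 0$ while $\phi$ still grows fast enough at infinity to force $h_{\delta, \eps}$ to attain its supremum is delicate, because $\partial_a \Phi_p$ decays only like $|r|^{-d}$ at infinity; this likely forces $\phi$ to be of logarithmic or slower growth, possibly with a truncation, and to be balanced against the strict margin $1 - (2d-1)m^2$. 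By contrast, the $O(\eps)$ discrepancy between $\nabla f(x_0)$ and $\nabla g(x_0)$ introduced by the penalty is straightforward to absorb into the same margin.
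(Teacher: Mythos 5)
Your core mechanism is exactly the one the paper uses: the monotonicity of $a \mapsto (a - p\cdot r)/(a^2+|r|^2)^{d/2}$ on the range $|a|\leq m|r|$, $|p|\leq m$, with the threshold $m < 1/\sqrt{2d-1}$ forced by the sign of the numerator of $\partial_a\Phi_p$, is precisely Lemma \ref{l:comparison} (stated there after dividing by $|r|$), and the overall scheme (strict initial separation, $\mu t$ shift, penalization at infinity, first crossing point) is the same as in the proof of Theorem \ref{t:comparison2}. So this is essentially the paper's argument, and the computation you give is correct.

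The one step you flag as ``delicate'' and leave open --- that the max condition on $h_{\delta,\eps}=f-g-\delta t-\eps\phi$ only gives $\alpha(y)-\beta(y)\leq \eps[\phi(y)-\phi(x_0)]$ rather than $\alpha\leq\beta$ pointwise --- is resolved in the paper by a small reorganization that you may as well adopt: instead of penalizing the difference, compare $f$ against $g_\eps := g+\eta+\mu t+\eps\phi$ directly. At the first crossing point of $f$ and $g_\eps$ one has the \emph{exact} increment inequality $\delta_h f\leq \delta_h g_\eps$ together with $\nabla f=\nabla g_\eps$, so the monotonicity of $\Phi_p$ applies cleanly with no $O(\eps)$ correction in the comparison of the two integrands. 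The entire $\eps$-error is then isolated in a separate claim: that $g_\eps$ is still a strict supersolution with slope below $1/\sqrt{2d-1}$. That claim is checked at a \emph{general} point $(t,x)$ (not just the crossing point), where the extra terms are $\eps\int(\delta_h\phi-\nabla\phi\cdot h)\,(\delta_hg_\eps^2+|h|^2)^{-d/2}dh$ plus the change in the denominator from $g$ to $g_\eps$; both are bounded by $C(\phi,g)\,\eps$ and absorbed into the margin $\mu$, provided $\phi$ has bounded gradient and Hessian and satisfies $\sup_x\int_{|h|>1}|\delta_h\phi(x)|\,|h|^{-d}\,dh<\infty$. Your concern that $\partial_a\Phi_p$ decays only like $|r|^{-d}$ is exactly why this tail condition is imposed on $\phi$, and it is compatible with $\phi\to\infty$ at spatial infinity because Definition \ref{d:classical} builds in integrably sublinear growth of $f-g$, so one can take $\phi$ integrably sublinear (logarithmic growth is more than enough) and still have $(f-g)/\phi\to 0$ at infinity. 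With that substitution your argument closes.
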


%The central piece to the main theorem though is the instantaneous generation of the modulus of continuity $\rho$ for $\nabla f$.  
%
%We prove Theorem \ref{t:main} by deriving a priori estimates for smooth solutions to \eqref{e:fequation} with initial data $f_0\in C^\infty_c(\R^{d-1})$ depending primarily on $||\nabla f_0||_{L^\infty}$.  We prove enough estimates that by approximating in $W^{1,\infty}_{loc}$ with smooth compactly supported initial data, we get solutions $f^\epsilon$ which will converge along subsequences in $C^1_{loc}$ to a solution $f$ solving \eqref{e:fequation} for arbitrary initial data $f_0\in W^{1,\infty}(\R^{d-1})$ with $||\nabla f||_{L^\infty}<\frac{1}{\sqrt{2d-1}}$.

\subsection{Proof outline}

The main strategy is to prove a priori estimates for sufficiently smooth solutions to the Muskat equation \eqref{e:fequation} on the time interval $[0,T]$ depending only on $||\nabla_x f_0||_{L^\infty}$ and $\Omega(R)$.  In particular, the estimates will be independent of the time of existence $T$.  We then use the existing $C^{2,\delta}$ continuation criteria of \cite{Localwellpose} and the vanishing viscosity method from the theory of elliptic equations in order to get global classical solutions for smooth initial data.  Approximating rough, unbounded initial data by smooth compactly supported functions, our a priori estimates will guarantee that we have enough compactness to pass to the limit and get a classical solution to \eqref{e:fequation} which exists for all time.

The most important part of Theorem \ref{t:main} is the $C^{1,1}$ bounds \eqref{e:mainresult}, as the rest of the a priori bounds are a consequence of that.  Following the proof scheme laid out in \cite{Kiselev} proof of global wellposedness for the critical SQG, we do this by showing that the equation \eqref{e:feequation} for the directional derivative $\nabla_x f\cdot e$ generates a Lipschitz modulus of continuity.  That is, we show that there exists a Lipschitz function $\overline{\omega}: [0,\infty)\to [0,\infty)$ depending only on $||\nabla_x f_0||_{L^\infty}$ such that every solution $f$ to the Muskat equation \eqref{e:fequation} satisfies 
\begin{equation}\label{e:omegabarequation}
|\nabla_x f(t,x)-\nabla_x f(t,y)| \leq \overline{\omega}\left(\frac{|x-y|}{t}\right).
\end{equation}
The technique of tailor crafting a modulus of continuity to fit a specific equation was first used in \cite{Kiselev}, but it has since been used on a number of active scalar equations \cite{KiselevBurgers, KiselevSurvey, KiselevIntRearrange}, the 2D Muskat problem \cite{MuskatMe}, and even for geometric flows like fractional mean curvature \cite{FMCFme}.  

In order to explain the main idea behind the proof scheme, let us first consider a simpler example.  Let $u:[0,T]\times \R^{2}\to \R$ solve the drift-diffusion equation
\begin{equation}\label{e:simpleuequation}
\partial_t u(t,x) = b\cdot \nabla_x u(t,x) + \int\limits_{\R^{2}} \delta_h u(t,x) \tilde{K}(h)dh, 
\end{equation}
where $b\in \R^{2}$ is constant and $\tilde{K}$ satisfies 
\begin{equation}
0<\displaystyle\frac{\lambda}{|h|^3}\leq \tilde{K}(h) \leq \frac{\Lambda}{|h|^3}<\infty, \qquad \tilde{K}(h)=\tilde{K}(-h).
\end{equation}
This equation is translation invariant with a comparison principle.  Thus we have that
\begin{equation}
\begin{split}
\sup\limits_{x\in\R^{2}}u(0,x+h)-u(0,x) \leq \omega(|h|) \qquad \Rightarrow \qquad u(0,x)\leq u(0,x+h)+\omega(|h|) \ \forall x\in \R^{2}
\\ \Rightarrow u(t,x) \leq u(t,x+h) +\omega(|h|) , \ \forall (t,x)\in [0,T]\times \R^{2}.
\end{split}
\end{equation}
Thus we get propagation of an arbitrary modulus of continuity $\omega$.  In particular, this implies
\begin{equation}\label{e:simplequalupperbound}
\left\{\begin{array}{ll} u(t_0,x+h)-u(t_0,x) \leq \omega(|h|), & \forall x,h\in \R^{2}, \\ u(t_0,\xi/2)-u(t_0,-\xi/2) = \omega(|\xi|), & \xi\in \R^{2}  \end{array}\right. \Rightarrow \qquad \frac{d}{dt}\left(u(t,\xi/2)-u(t,-\xi/2)\right)\bigg|_{t=t_0} \leq 0,
\end{equation}
This implication though only relies on the nonnegativity $\tilde{K}(h)\geq 0$, not the full uniform ellipticity.  A more refined argument would in fact prove a strictly negative upper bound
\begin{equation}\label{e:simplequantupperbound}
\frac{d}{dt}\left(u(t,\xi/2)-u(t,-\xi/2)\right)\bigg|_{t=t_0} \leq C_{\lambda}[\omega](\xi)< 0.  
\end{equation}
Thus we have a strictly negative, quantitative upperbound depending on the modulus $\omega$, ellipticity constants, and the crossing point $\xi$.  This strict negativity in fact implies that our solution $u$ actually regularizes, allowing us to improve the modulus of continuity $\omega$ over time.  

The equation \eqref{e:feequation} that our directional derivative $f_e$ solves is not nearly as nice as \eqref{e:simpleuequation}.  However, because the upperbound \eqref{e:simplequantupperbound} is quantitative, there is hope that we may still be able to prove the same result for $f_e$ so long as we can quantitatively bound how far \eqref{e:feequation} is from a translation invariant, symmetric equation with a comparison principle like \eqref{e:simpleuequation}, and then choose the modulus $\omega$ correctly.

\subsection{Organization of the paper}
The rest of the paper is organized as follows:
\begin{itemize}

\item In Section \ref{s:maxprinciple} we differentiate \eqref{e:fequation} to derive the equation for the directional derivatives $f_e = \nabla_x f\cdot e$, showing that they solve a uniformly parabolic equation \eqref{e:feequation} with a maximum principle when $||\nabla_x f||_{L^\infty(\R^2)}<\frac{1}{\sqrt{5}}$.  

\item In Section \ref{s:breakthrough} we repeat the breakthrough argument of \cite{Kiselev}, reducing the proof of generation of a modulus modulus of continuity for $\nabla_x f$ down to proving an inequality.  

\item In Section \ref{s:modulusestimates} we derive asymmetry, continuity, and diffusive bounds on our drift term and elliptic kernel $K$ for \eqref{e:feequation} depending on the modulus of continuity of $\nabla_x f$. 

\item In Section \ref{s:modulusinequality} we then use these bounds to bound the time derivative of our slope, and then apply them to a specific modulus of continuity, proving propagation of regularity in the process.  

\item In Section  \ref{s:modulusrho} we combine the propagation of regularity along with scale invariance to get the generation of the modulus $\overline{\omega}$ such that \eqref{e:omegabarequation} holds.  

\item In Section \ref{s:comparison} we prove the comparison principle for the Muskat equation \eqref{e:fequation} along with a number of corollaries arising from that.  In particular, we show uniqueness for classical solutions and propagation of sublinear growth bounds when $||\nabla_x f||_{L^\infty(\R^2)} < \frac{1}{\sqrt{5}}$.  

\item In Section \ref{s:regularitytime} we use our $C^{1,1}$ estimates and growth bounds in order to prove a few estimates on regularity in time, guaranteeing compactness in $C^1$.  

\item Finally in Section \ref{s:globalexistence} we use the vanishing viscosity method, our a priori estimates, and the $C^{2,\delta}$ continuation criteria of \cite{Localwellpose} in order to prove that there exist global, classical solutions.

\end{itemize}

\subsection{Notation}

For $h\in \R^2$, we let $\delta_h$ denote the partial difference operator 
\begin{equation}
\delta_h g(x):= g(x+h)-g(x).
\end{equation}
For $e\in S^1$, we let $g_e$ denote the directional derivative
\begin{equation}
g_e(x) = \nabla_x g(x)\cdot e.
\end{equation}
We say that a quantity $A \lesssim B$ if $A$ is bounded above by $B$, up to a multiplicative constant depending only on the initial slope $||\nabla_x f||_{L^\infty}$.  That is, 
\begin{equation}
A\lesssim B  \quad \iff \quad A \leq C(||\nabla_x f_0||_{L^\infty}) B.  
\end{equation}

%%%%%%%%%%%%%%%%%%%%%%%%%%%%%%%%%%%%%%%%%%%%%%%%%%%%%%%%%%%%%%%%%%%%%%

\section{Maximum Principle for $\nabla_x f$}\label{s:maxprinciple}

Let $h\in \R^2$, and $\delta_h$ denote the finite difference operator 
\begin{equation}
\delta_h g(x):= g(x+h) - g(x).
\end{equation}
Then making a simple change of variables, the Muskat equation can be written in the form
\begin{equation}\label{e:fequation2}
\partial_t f(t,x) = \int\limits_{\R^{2}} \frac{\delta_hf(t,x) - \nabla f(t,x)\cdot h}{(\delta_h f(x)^2 + |h|^2)^{3/2}} dh,
\end{equation}
Taking $e\in S^1$ and differentiating \eqref{e:fequation2} 
with respect to $e$ gives us 
\begin{equation}\label{e:feequation}
\begin{split}
\partial_t f_e(t,x) &= \int\limits_{\R^{2}} \frac{-\nabla f_e(t,x)\cdot h}{(\delta_hf(t,x)^2 + |h|^2)^{3/2}}dh 
 + \int\limits_{\R^{2}} \delta_h f_e(t,x) K(t.x,h)dh,
\end{split}
\end{equation}
where 
\begin{equation}\label{e:Kdefn}
K(t,x,h) =  \frac{1}{(\delta_h f(t,x)^2 + |h|^2)^{3/2}} \left(1 - 3\frac{\delta_hf(t,x)(\delta_hf(t,x)-\nabla f(t,x)\cdot h)}{\delta_hf(t,x)^2+|h|^2}\right).
\end{equation}
Note that if $||\nabla_x f||_{L^\infty} \leq \displaystyle\frac{1}{\sqrt{5}}$, then 
\begin{equation}
1 - 3\frac{\delta_hf(t,x)(\delta_hf(t,x)-\nabla f(t,x)\cdot h)}{\delta_hf(t.x)^2+|h|^2} \geq 1 - 3\frac{2(5)^{-1}}{(5)^{-1}+1} = 1-3\frac{2}{6}=0.
\end{equation}
Hence, $K(t,x,h)\geq 0$ whenever $||\nabla_x f||_{L^\infty} \leq\displaystyle \frac{1}{\sqrt{5}}$ with 
\begin{equation}
\frac{\lambda }{|h|^2}\leq K(t,x,h)\leq \frac{\Lambda}{|h|^2},
\end{equation}
whenever $||\nabla_x f||_{L^\infty(\R^2)} < \displaystyle \frac{1}{\sqrt{5}}$. 

Taking advantage of this, under this initial slope bound we get a maximum principle for $\nabla_x f$ for sufficiently smooth solutions.  

\begin{proposition}\label{p:maximumprinciple}
Let $f:[0,T]\times \R^{2}\to \R$ be a sufficiently smooth solution to \eqref{e:fequation} such that all directional derivative $f_e$ for $e\in S^{1}$ are classical solutions to \eqref{e:feequation}.  Then if $||\nabla_x f_0||_{L^\infty}<\displaystyle\frac{1}{\sqrt{5}}$, then the kernel $K$ defined in \eqref{e:Kdefn} is uniformly elliptic.  That is, 
\begin{equation}\label{e:uniformellipticity}
\frac{\lambda }{|h|^2}\leq K(t,x,h)\leq \frac{\Lambda}{|h|^2},
\end{equation} 
with ellipticity constants $\lambda, \Lambda $ depending only on $||\nabla_x f_0||_{L^\infty}$.  In particular, directional derivatives obey the maximum principle with 
\begin{equation}
\sup\limits_{x\in\R^{2}} f_e(t,x)\leq \sup\limits_{x\in \R^{2}} f_e(0,x), \qquad \forall t\in [0,T],
\end{equation}
and hence 
\begin{equation}
 ||\nabla_x f||_{L^\infty}(t)\leq ||\nabla_x f_0||_{L^\infty}.
\end{equation}
\end{proposition}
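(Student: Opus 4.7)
The proposition has two parts: the uniform ellipticity of $K$, and the maximum principle for $f_e$. The plan is to verify the first by a direct algebraic estimate of the factor in \eqref{e:Kdefn}, and to deduce the second from the structure of \eqref{e:feequation} together with a standard argument handling the non-compactness of $\R^2$.

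For the ellipticity, I would rewrite the parenthesized factor as
\begin{equation*}
1 - 3\frac{\delta_h f(\delta_h f - \nabla f \cdot h)}{\delta_h f^2 + |h|^2} = \frac{|h|^2 - 2(\delta_h f)^2 + 3\, \delta_h f\,(\nabla f \cdot h)}{\delta_h f^2 + |h|^2}.
\end{equation*}
With $M = ||\nabla_x f||_{L^\infty}$, the mean value theorem gives $|\delta_h f| \leq M|h|$ and trivially $|\nabla f \cdot h| \leq M|h|$. Applying Young's inequality $3|ab| \leq \tfrac{3}{2}(a^2 + b^2)$ to the cross term bounds the numerator from below by $(1 - 5M^2)|h|^2$, which is strictly positive when $M < 1/\sqrt{5}$. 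The same calculation bounds the numerator above by $(1 + 5M^2)|h|^2$. Combining with $|h|^2 \leq \delta_h f^2 + |h|^2 \leq (1+M^2)|h|^2$ in the prefactor $(\delta_h f^2 + |h|^2)^{-3/2}$ produces the stated uniform ellipticity bounds on $K$ with constants depending only on $M$.

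For the maximum principle on $f_e$, the key observation is that at any interior maximum point $x_0$ of $f_e(t,\cdot)$ one has $\nabla f_e(t,x_0) = 0$, so the drift term in \eqref{e:feequation} vanishes, while $\delta_h f_e(t,x_0) \leq 0$ for every $h \in \R^2$. Combined with $K \geq 0$ from the first part, this forces $\partial_t f_e(t,x_0) \leq 0$. The main obstacle is that on $\R^2$ the supremum of $f_e(t,\cdot)$ need not be attained, which I would handle by the standard barrier method: for $\eps > 0$ introduce $g^\eps(t,x) = f_e(t,x) - \eps t - \eps \sqrt{1+|x|^2}$. Since $f_e$ is bounded by the sufficient-smoothness hypothesis, $g^\eps(t,\cdot)$ attains its supremum at a finite point, and if this supremum were ever to exceed its initial value, then at a first such crossing time the interior-max argument applied to $g^\eps$, combined with elementary bounds on the action of the drift and nonlocal operators on the smooth weight $\sqrt{1+|x|^2}$, would give $\partial_t g^\eps < 0$, a contradiction. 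Sending $\eps \to 0^+$ yields $\sup_x f_e(t,x) \leq \sup_x f_e(0,x)$, and taking the supremum over $e \in S^1$ finally produces $||\nabla_x f(t)||_{L^\infty} \leq ||\nabla_x f_0||_{L^\infty}$.
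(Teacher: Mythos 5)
Your ellipticity computation is correct and is a nice alternative to the paper's: rewriting the bracket as $\frac{|h|^2-2(\delta_hf)^2+3\delta_hf(\nabla f\cdot h)}{\delta_hf^2+|h|^2}$ and using Young's inequality on the cross term gives the lower bound $(1-5M^2)|h|^2$ on the numerator, recovering the same threshold $M<1/\sqrt{5}$ that the paper obtains by the sharper estimate $\big|\frac{\delta_hf(\delta_hf-\nabla f\cdot h)}{\delta_hf^2+|h|^2}\big|\leq\frac{2M^2}{M^2+1}$. However, your argument has a genuine structural gap: you take $M=||\nabla_x f||_{L^\infty}$ as if the slope bound were known at all times, whereas the hypothesis only controls $||\nabla_x f_0||_{L^\infty}$. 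Nonnegativity of $K(t,x,h)$ at time $t$ requires $||\nabla_x f(t,\cdot)||_{L^\infty}\leq 1/\sqrt{5}$, which is precisely (part of) the conclusion you are trying to prove via the maximum principle. This circularity is the real content of the proposition, and the paper breaks it with a continuation argument: fix $B$ with $||\nabla f_0||_{L^\infty}<B<1/\sqrt{5}$, let $t^*$ be the supremum of times up to which the slope stays $\leq B$, note $K$ is uniformly elliptic on $[0,t^*]$ with constants depending on $B$, apply the maximum principle there to conclude the slope is actually $\leq ||\nabla f_0||_{L^\infty}<B$ on $[0,t^*]$, and deduce $t^*=T$ by continuity of $t\mapsto||\nabla f||_{L^\infty}(t)$. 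Without this step your proof does not establish ellipticity constants depending only on the initial slope.

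There is also a technical problem in your barrier step. The weight $\phi(x)=\sqrt{1+|x|^2}$ grows linearly, while $K(t,x,h)\sim|h|^{-3}$ on $\R^2$, so $\int_{|h|>1}\delta_h\phi(x)\,K(t,x,h)\,dh$ diverges logarithmically (the integrand is positive and of size $|h|^{-2}$, with no cancellation available); the "elementary bounds on the action of the nonlocal operator on the weight" that your contradiction relies on do not exist for this weight. Even after truncating the integral at scale $R$ and optimizing, the error from the barrier is of order $\eps\log(1/\eps)$, which beats the margin $\eps$ supplied by the penalization $-\eps t$. The standard fix is to use a strictly sublinear weight such as $(1+|x|^2)^{\beta/2}$ with $\beta<1$ (for which $\int\min(|h|^2,|h|^\beta)|h|^{-3}dh<\infty$), or to exploit the decay of $f_e$ assumed in the "sufficiently smooth" hypothesis as the paper implicitly does when it invokes the maximum principle for the uniformly elliptic linear equation. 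Neither issue is fatal to the strategy, but both need to be repaired.
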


\begin{proof}
Let $f:[0,T]\times \R^{2}\to \R$ be a sufficiently smooth solution to \eqref{e:fequation} with $||\nabla f_0||_{L^\infty}< \displaystyle\frac{1}{\sqrt{5}}$.  Fix some arbitrary constant $B$ with 
\begin{equation}
||\nabla f_0||_{L^\infty}<B<\frac{1}{\sqrt{5}}.
\end{equation}
Defining the time $t^*$ by 
\begin{equation}
t^* = \sup\left\{t\in [0,T]: \  ||\nabla f||_{L^\infty}(s)\leq B \qquad \forall s\leq t \right\}, 
\end{equation}
then we necessarily have that $t^*>0$, and in fact we shall show that $t^*=T$.

For any time $t<t^*$, we have that 
\begin{equation}
||\nabla f||_{L^\infty}(t)\leq B.  
\end{equation}
Fix $e\in S^{1}$, arbitrary.  Then for any $t<t^*$ and $x,h\in \R^{2}, h\not = 0$, we have that 
\begin{equation}
0<\left(\frac{1}{(B^2+1)^{3/2}}\right)\left(1-3\frac{2B^2}{B^2+1}\right)\frac{1}{|h|^3} \leq K(t,x,h)\leq \left(1+3\frac{2B^2}{B^2+1}\right)\frac{1}{|h|^3} <\infty.
\end{equation}
Thus as $f_e$ is a classical solution to a uniformly elliptic equation on the time scale $[0,t^*]$, we have that the maximum principle holds:
\begin{equation}
\sup\limits_{x\in \R^{2}} f_e(t,x)\leq \sup\limits_{x\in \R^{2}}f_e(0,x)\leq ||\nabla f_0||_{L^\infty}<B, \qquad \forall t\in [0,t^*].
\end{equation}
Taking the supremum in $e\in S^{1}$, we thus have
\begin{equation}
||\nabla f||_{L^\infty}(t)\leq ||\nabla f_0||_{L^\infty} <B, \qquad \forall t\in [0,t^*].
\end{equation}
By the definition of the time $t^*$ and the continuity of $||\nabla f||_{L^\infty}(t)$ as a function of time, we thus have that in fact $t^*=T$.  Hence, every directional derivative $f_e$ obeys the maximum principle on the time interval $[0,T]$, and solves a uniformly elliptic equation with ellipticity constants
\begin{equation}
0<\lambda = \left(\frac{1}{(||\nabla f_0||_{L^\infty}^2+1)^{3/2}}\right)\left(1 - 3\frac{2||\nabla f_0||_{L^\infty}^2}{||\nabla f_0||_{L^\infty}^2+1} \right)\leq 1 + 3\frac{2||\nabla f_0||_{L^\infty}^2}{||\nabla f_0||_{L^\infty}^2+1} = \Lambda <\infty.
\end{equation}
\end{proof}

\begin{remark}
We note that maximum principle for the directional derivatives $f_e$ also follows directly from the comparison principle proven in Section \ref{s:comparison}.  As the uniform ellipticity of the kernel $K$ is vital for our later arguments though, we chose to prove these two results separately.  
\end{remark}

%Since this is a uniformly elliptic equation, we have that $f_e(t,x)$ has the maximum principle.  Thus if $f$ is a classical solution to \eqref{e:fequation} with $||\nabla f_0||_{L^\infty} < \displaystyle \frac{1}{\sqrt{2d-1}}$, then 
%\begin{equation}
%\max\limits_x f_e(t,x) \leq \max\limits_x f_e(0,x) \ \ \forall e\in S^{d-1} \quad \Rightarrow \quad ||\nabla f(t,\cdot)||_{L^\infty}\leq ||\nabla f_0 ||_{L^\infty}.
%\end{equation}
%
%Thus so long as our initial data satisfies $||\nabla f_0||_{L^\infty}< \frac{1}{\sqrt{2d-1}}$, we will have that $K(t,x,h)$ is uniformly elliptic for all $(t,x)\in [0,\infty)\times \R^{d-1}$.  

%%%%%%%%%%%%%%%%%%%%%%%%%%%%%%%%%%%%%%%%%%%%%%%%%%%%%%%%%%%%%%%%%%%%%%

\section{Breakthrough Argument}\label{s:breakthrough}

%Let $f:[0,T]\times \R^2\to \R$ be a smooth, decaying solution of the Muskat equation \eqref{e:fequation}.  In particular, we assume that the gradient of our solution $\nabla_x f$ is $C^1$ in time, $C^2$ in space with 
%\begin{equation}
%\lim\limits_{|x|\to \infty} \nabla_x f(t,x), D^2_x f(t,x) = 0, \qquad \text{ uniformly for } t\in [0,T].
%\end{equation}

%Assume that we have a global solution $f:[0,\infty)\times \R^{d-1}\to \R$ with $\nabla f$ $C^1$ in time and $C^{2}$ in space and that $\lim\limits_{|x|\to \infty} \nabla f(t.x) =0.$  To justify this, we take $f_0 \in C^\infty_c(\R^{d-1})$ and let $f$ be a solution to the regularized system
%\begin{equation}
%\partial_t f(t,x) = \epsilon \Delta f(t,x) + \int\limits_{\R^{d-1}} \frac{f(t,y)-f(t,x) - \nabla f(t,x)\cdot (y-x)}{((f(t,y)-f(t,x))^2 + |y-x|^2)^{d/2}} dy.
%\end{equation}
%The $\epsilon \Delta$ should guarantee the global smooth, decaying solution.  And the added viscosity doesn't hurt any of our later estimates, so we can just ignore it and derive the apriori estimate for the original equation.  

Let $f:[0,T]\times \R^2\to \R$ be a smooth, decaying solution of the Muskat equation \eqref{e:fequation}, and fix some concave function $\overline{\omega}:[0,\infty)\to [0,\infty)$ with $\overline{\omega}(0)=0$.  Then since $f$ is smooth and decays at $\infty$, we then have that 
\begin{equation}
|\nabla_x f(t,x)-\nabla_x f(t,y)| < \overline{\omega}\left(\frac{|x-y|}{t}\right), \qquad \text{ for all } x\not = y\in \R^2, \text{ and for all times } t \text{ sufficiently small}.
\end{equation}
Suppose that $\nabla f(t,\cdot)$ has modulus of continuity $\overline{\omega}(\cdot/ t)$ for all $t<t_0$ for some time $t_0<T$.  Then by continuity,
\begin{equation}
|\nabla_x f(t_0,x)-\nabla_x f(t_0,y)| \leq \overline{\omega}\left(\frac{|x-y|}{t_0}\right), \quad \forall x\not = y\in \R^{2}.
\end{equation}
We first prove that if we have the strict inequality $|\nabla_x f(t_0,x)-\nabla_x f(t_0,y)| < \overline{\omega}\left(|x-y|/t_0\right)$, then $\nabla_x f(t_0,\cdot)$ will have modulus $\overline{\omega}(\cdot/ t)$ for $t\leq t_0+\epsilon$.

\begin{lemma}
Let $f\in C([0,T]; C^3_0(\R^{2}))$, and $t_0\in (0,T)$.  Suppose that $f(t_0,\cdot)$ satisfies
\begin{equation}
|\nabla_x f(t_0,x)-\nabla_x f(t_0,y)| < \overline{\omega}\left(\frac{|x-y|}{t_0}\right), \forall x\not = y\in \R^{2},
\end{equation}
for some Lipschitz modulus of continuity $\overline{\omega}$ with $\overline{\omega}''(0) = -\infty$.  Then
\begin{equation}
|\nabla_x f(t_0+\epsilon, x) - \nabla f(t_0+\epsilon, y)| < \overline{\omega}\left(\frac{|x-y|}{t_0+\epsilon}\right), \forall x\not = y\in \R,
\end{equation}
for all $\epsilon>0$ sufficiently small.
\end{lemma}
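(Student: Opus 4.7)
The plan is to upgrade the pointwise strict inequality at $t_0$ to a uniform positive lower bound on the gap $\overline{\omega}(|x-y|/t) - |\nabla_x f(t,x) - \nabla_x f(t,y)|$, then propagate this gap over a short time interval using the continuity of $f$ in $C([0,T]; C_0^3)$. The set $\{(x,y) \in \R^2 \times \R^2 : x \neq y\}$ splits naturally into a near-diagonal regime $0 < |x-y| \leq r_1$, where the assumption $\overline{\omega}''(0) = -\infty$ plays an essential role, and a bulk regime $|x-y| \geq r_1$, which is handled by compactness and the decay of $\nabla_x f$ at infinity.

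For the near-diagonal regime, I would first promote the pointwise strict inequality to the uniform Hessian bound $M(t_0) := \|\nabla_x^2 f(t_0, \cdot)\|_{L^\infty} < \overline{\omega}'(0)/t_0$. Writing $\overline{\omega}(r) = \overline{\omega}'(0) r - \psi(r)$, where $\psi \geq 0$ is convex with $\psi(0) = \psi'(0) = 0$ and $\psi''(0) = +\infty$ (so that $\psi(r)/r^2 \to +\infty$ as $r \to 0^+$), the reverse triangle estimate
\[ |\nabla_x f(t_0, x_0 + s e_0) - \nabla_x f(t_0, x_0)| \geq s\, |\nabla_x^2 f(t_0, x_0) e_0| - C s^2, \quad C := \tfrac{1}{2}\|\nabla_x^3 f(t_0, \cdot)\|_{L^\infty}, \]
combined with the hypothesized strict inequality, rearranges to $|\nabla_x^2 f(t_0, x_0) e_0| < \overline{\omega}'(0)/t_0 - \psi(s/t_0)/s + Cs$. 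If equality $|\nabla_x^2 f(t_0, x_0) e_0| = \overline{\omega}'(0)/t_0$ held at some point, then $\psi(s/t_0) < C s^2$ for all small $s$, contradicting $\psi(r)/r^2 \to \infty$. Hence the pointwise strict bound holds at every $(x_0, e_0)$; since $\nabla_x^2 f(t_0, \cdot)$ is continuous and decays at infinity, its supremum is attained and thus $M(t_0) < \overline{\omega}'(0)/t_0$ with some gap $\gamma > 0$. Continuity of $t \mapsto M(t)$ and of $t \mapsto \overline{\omega}'(0)/t$ then preserves a gap of at least $\gamma/2$ on some interval $[t_0, t_0 + \epsilon_1]$. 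Shrinking $r_1$ so that $\psi(r/t)/r \leq \gamma/4$ for $0 < r \leq r_1$ and $t$ in this interval yields
\[ \overline{\omega}(|x-y|/t) - |\nabla_x f(t,x) - \nabla_x f(t,y)| \geq (\gamma/4)\, |x-y|, \quad t \in [t_0, t_0 + \epsilon_1],\ 0 < |x-y| \leq r_1. \]

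For the bulk regime $|x-y| \geq r_1$, the gap is a continuous function of $(t, x, y)$, strictly positive at $t_0$ by hypothesis, and by the decay of $\nabla_x f$ at infinity (inherited from $f \in C_0^3$) it tends to at least $\overline{\omega}(r_1/t_0) > 0$ whenever $|x|$ and $|y|$ both leave every compact set; the gap therefore attains a positive minimum $\eta > 0$ on $\{|x-y| \geq r_1\}$ at time $t_0$. Continuity of $f$ in $C([0,T]; C_0^1)$ preserves a gap of at least $\eta/2$ on a small interval $[t_0, t_0 + \epsilon_2]$. Setting $\epsilon = \min(\epsilon_1, \epsilon_2)$ and combining the two regimes yields the desired strict inequality for all $x \neq y$ and all $t \in [t_0, t_0 + \epsilon]$. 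The main obstacle is the near-diagonal step, where the hypothesis $\overline{\omega}''(0) = -\infty$ is essential: it is precisely what makes the subleading concave correction $\psi$ dominate the quadratic Taylor error and upgrade the pointwise strict inequality to the strict Hessian bound that the argument requires; without it, one could have $M(t_0) = \overline{\omega}'(0)/t_0$ and no uniform gap near the diagonal.
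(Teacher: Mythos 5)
Your strategy is the same three--regime decomposition the paper uses: upgrade the strict modulus inequality near the diagonal to a strict Hessian bound $||D^2_xf(t_0,\cdot)||_{L^\infty}<\overline{\omega}'(0)/t_0$ via $\overline{\omega}''(0)=-\infty$ and the decay of $D^2_xf$, handle the compact bulk by uniform continuity in time, and handle the far field by decay of $\nabla_x f$. Your near--diagonal computation with $\psi(r)=\overline{\omega}'(0)r-\overline{\omega}(r)$ is a correct, more quantitative version of the paper's one--line assertion, and the propagation of the gap $\gamma$ over a short time interval is fine.

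The one step that does not follow as written is the claim that the gap attains a positive minimum $\eta>0$ on the non--compact set $\{|x-y|\geq r_1\}$. You justify positivity of the limiting gap only along sequences where \emph{both} $|x|$ and $|y|$ escape to infinity. For sequences where $x\to x_\infty$ stays bounded while $|y|\to\infty$, one has $|x-y|\to\infty$ and the gap tends to $\lim_{r\to\infty}\overline{\omega}(r)-|\nabla_x f(t_0,x_\infty)|$; positivity of this quantity requires $\sup\overline{\omega}>||\nabla_x f(t_0,\cdot)||_{L^\infty}$, which is \emph{not} a consequence of the stated hypotheses --- the pointwise strict inequality only yields $||\nabla_x f(t_0,\cdot)||_{L^\infty}\leq \sup\overline{\omega}$ in the limit, possibly with equality, in which case the infimum of the gap can be $0$ and the continuity argument cannot close. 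The paper covers exactly this configuration by choosing $R_1$ with $\overline{\omega}(R_1/(t_0+\epsilon))>\osc_{\R^2}\nabla_x f(t_0+\epsilon,\cdot)$ together with $R_2$ beyond which $|\nabla_x f|$ is small, which likewise silently uses that $\overline{\omega}$ eventually dominates the oscillation of the slope; for the modulus \eqref{e:omegadefn} actually fed into this lemma, $\overline{\omega}$ is unbounded, so both arguments are salvageable. You should add the corresponding case (one point bounded, one escaping) to your compactness step and state explicitly where the largeness of $\overline{\omega}$ at infinity is used.
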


\begin{proof}
To begin, note that for any compact compact subset $K\subset \R^{4}\setminus \{(x,x)|x\in \R^{2}\}$,
\begin{equation}
\begin{split}
|\nabla_x f(t_0,x)&-\nabla_x f(t_0,y)| < \overline{\omega}\left(\frac{|x-y|}{t_0}\right) \quad \forall (x,y)\in K \quad
\\&  \Rightarrow \quad \nabla_x f(t_0+\epsilon, x)-\nabla_x f(t_0+\epsilon,y) < \overline{\omega}\left(\frac{|x-y|}{t_0+\epsilon}\right) \quad \forall (x,y)\in K,
\end{split}
\end{equation}
for $\epsilon >0$ sufficiently small by uniform continuity.  So, we only need to focus on pairs $(x,y)$ that are either close to the diagonal, or that are large.

To handle $(x,y)$ near the diagonal, we start by noting that $f(t_0,\cdot)\in C^3(\R^{2})$ and $\overline{\omega}''(0)=-\infty$.  Thus for every $x$ we get that
\begin{equation}
|D^2_xf(t_0,x)| := \max_{e\in S^1} \partial_{ij}^2f(t,x)e_i e_j < \frac{\overline{\omega}'(0)}{t_0}.
\end{equation}
Since $f\in C([0,T]; C^3_0(\R^2))$, $D_x^2f_(t_0,x)\to 0$ as $|x|\to \infty$.  Thus we can take the point where $\max\limits_x ||D_x^2f(t_0,x)||$ is achieved to get that
\begin{equation}
||D_x^2f(t_0,\cdot)||_{L^\infty} < \frac{\overline{\omega}'(0)}{t_0}.
\end{equation}
By continuity of $D_x^2f$, we thus have 
\begin{equation}
||D_x^2f(t_0+\epsilon,\cdot)||_{L^\infty} < \displaystyle\frac{\overline{\omega}'(0)}{t_0+\epsilon},
\end{equation}
for $\epsilon>0$ sufficiently small.  Hence,
\begin{equation}\label{e:smallxypropogation}
|\nabla_x f(t_0+\epsilon,x)-\nabla_x f(t_0+\epsilon,y)| < \overline{\omega}\left(\frac{|x-y|}{t_0+\epsilon}\right), \quad |x-y|<\delta,
\end{equation}
for $\epsilon, \delta$ sufficiently small.

Now let $R_1,R_2>0$ be such that
\begin{equation}
\overline{\omega}\left(\frac{R_1}{t_0+\epsilon}\right) > \osc_{\R^{2}} \nabla_x f(t_0+\epsilon,\cdot),
\end{equation}
and that $|x|>R_2$ implies
\begin{equation}
|\nabla_x f(t_0+\epsilon,x)| < \frac{1}{2}\overline{\omega}\left(\frac{\delta}{T+\epsilon}\right)
\end{equation}
for $\epsilon>0$ sufficiently small.
Taking $R = R_1+R_2$, it's easy to check that $|x|>R$ implies that
\begin{equation}
|\nabla_x f(t_0+\epsilon,x) - \nabla_x f(t_0+\epsilon,y)| < \overline{\omega}\left(\frac{|x-y|}{t_0+\epsilon}\right), \quad \forall y\not = x.
\end{equation}

Finally, taking $K= \{ (x,y)\in \R^{4}: |x-y|\geq \delta, |x|,|y|\leq R\}$, we're done.

\end{proof}

Thus by the lemma, if $\nabla_x f$ was to lose its modulus after time $t_0$, we must have that there exist $x\not = y\in \R^2$ with
\begin{equation}
|\nabla_x f(t_0,x) - \nabla_x f(t_0,y)| = \overline{\omega}\left(\frac{|x-y|}{t_0}\right).
\end{equation}
and hence a direction $e\in S^{1}$ such that 
\begin{equation}\label{e:modulusequality}
f_e(t_0,x) - f_e(t_0,y)| = \overline{\omega}\left(\frac{|x-y|}{t_0}\right).
\end{equation}

We will show for a smooth solution $f$ of \eqref{e:fequation} and the correct choice of $\overline{\omega}$ that in this case
\begin{equation}\label{e:finalinequality}
\frac{d}{dt} \left(f_e(t,x) - f_e(t,y)\right)\bigg|_{t=t_0} < \frac{d}{dt}\left(\overline{\omega}\left(\frac{|x-y|}{t}\right)\right)\bigg|_{t=t_0},
\end{equation}
contradicting the fact that $f_e$ had modulus $\overline{\omega}(\cdot /t)$ for time $t<t_0$.

Thus we just need to prove \eqref{e:finalinequality} to complete the proof of the generation of modulus of continuity, completing the $C^{1,1}$ estimate \eqref{e:mainresult} of Theorem \ref{t:main}.

%%%%%%%%%%%%%%%%%%%%%%%%%%%%%%%%%%%%%%%%%%%%%%%%%%%%%%%%%%%%%%%%%%%%%%

\section{Modulus Estimates}\label{s:modulusestimates}
Let $f $ be a sufficiently smooth solution to the 3-dimensional Muskat equation \eqref{e:fequation} with $||\nabla f_0||_{L^\infty(\R^2)} < \displaystyle\frac{1}{\sqrt{5}}$.  

Suppose that $\omega:[0,\infty)\to [0,\infty)$ is such that 
\begin{equation}\label{e:omegabound}
\left\{\begin{array}{ll} 
\omega(0)=0, \omega''(r)<0, & \forall r\in (0,\infty)
\\|\delta_h \nabla_x f(t_0,x)| \leq \omega(|h|), & \forall x,h\in \R^{2}, \\ f_e(t_0,\xi/2)-f_e(t_0,-\xi/2) = \omega(|\xi|), & e\in S^{1}, \xi\in \R^{2} \text{ fixed,} \end{array}\right.
\end{equation}
where $\delta_h$ denotes the partial difference operator $\delta_h g(x) = g(x+h)-g(x)$.  

Our goal is to derive an upper bound on 
\begin{equation}\label{e:timediffboundgoal}
\frac{d}{dt}\left(f_e(t,\xi/2)-f_e(t,-\xi/2)\right)\bigg|_{t=t_0},
\end{equation}
in terms of the modulus $\omega$ and the initial slope $||\nabla_x f_0||_{L^\infty}$.  In order to do this, we first need to derive estimates on the asymmetry and $x$-dependence of the drift and kernel $K$ from \eqref{e:feequation} in order to bound how far \eqref{e:feequation} is from being a simple drift-diffusion equation like \eqref{e:simpleuequation}.  We will then use these to bound the difference in diffusions
\begin{equation}
\int\limits_{\R^{2}} \delta_h f_e(\xi/2)K(\xi/2, h) -\delta_h f_e(-\xi/2)K(-\xi/2, h)dh,
\end{equation}
at the end of this section, and finally give an upper bound on \eqref{e:timediffboundgoal} in Lemma \ref{l:timederivbound}.

\subsection{Asymmetry Bounds}
\begin{lemma}\label{l:asymmetry}
Let $f$ satisfy \eqref{e:omegabound} and the kernel $K$ be as in \eqref{e:Kdefn}.  Then the drift and kernel $K$ satisfy the pointwise asymmetry bounds
\begin{equation}
\begin{split}
\bigg| \frac{1}{(\delta_h f(x)^2 + |h|^2)^{3/2}} - \frac{1}{(\delta_{-h}f(x)^2 + |h|^2)^{3/2}} dh \bigg| \lesssim \frac{\omega(|h|)}{|h|^3}, 
\\ |K(x,h)-K(x,-h)|\lesssim \frac{\omega(|h|)}{|h|^3}.
\end{split}
\end{equation}
In particular, integrating over $|h|<|\xi|$
\begin{equation}
\begin{split}
\int\limits_{|h|<|\xi|} \bigg|\frac{|h|}{(\delta_h f(\pm \xi/2)^2 + |h|^2)^{3/2}} - \frac{|h|}{(\delta_{-h}f(\pm \xi/2)^2 + |h|^2)^{3/2}} \bigg| dh \lesssim \int\limits_{0}^{|\xi|}\frac{\omega(r)}{r}dr,
\\ \int\limits_{|h|<|\xi|} |h| |K(\pm \xi/2, h)-K(\pm \xi/2, -h)|dh \lesssim \int\limits_{0}^{|\xi|}\frac{\omega(r)}{r}dr.
\end{split}
\end{equation}
\end{lemma}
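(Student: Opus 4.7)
The plan is to reduce both pointwise asymmetry bounds to elementary algebraic manipulations combined with mean value estimates, after first establishing a small set of key difference inequalities for $f$ itself. Introduce the shorthand $p = \delta_h f(x)$, $q = \delta_{-h} f(x)$, $r = \nabla_x f(x) \cdot h$, and set $A = p^2 + |h|^2$, $B = q^2 + |h|^2$, so that the first asymmetry quantity is $|A^{-3/2} - B^{-3/2}|$, while expanding \eqref{e:Kdefn} gives
\begin{equation*}
K(x,h) = A^{-3/2} - 3 A^{-5/2} p(p-r), \qquad K(x,-h) = B^{-3/2} - 3 B^{-5/2} q(q+r).
\end{equation*}

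The key analytic input is the triple of difference bounds
\begin{equation*}
|p + q| \lesssim |h|\,\omega(|h|), \qquad |p - r| \lesssim |h|\,\omega(|h|), \qquad |q + r| \lesssim |h|\,\omega(|h|),
\end{equation*}
each proved by writing the relevant quantity as a line integral of $\nabla_x f$ and invoking the hypothesis $|\delta_h \nabla_x f(t_0,x)| \leq \omega(|h|)$ from \eqref{e:omegabound}, together with the monotonicity of $\omega$. Combined with the trivial slope bounds $|p|, |q|, |r| \lesssim |h|$ (coming from $\|\nabla_x f\|_{L^\infty} < 1/\sqrt{5}$) and the lower bound $\min(A,B) \geq |h|^2$, these estimates drive everything else.

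For the first pointwise bound, factor $A - B = (p+q)(p-q)$ to obtain $|A - B| \lesssim |h|^2\,\omega(|h|)$, then apply the mean value theorem to $t \mapsto t^{-3/2}$ on $[\min(A,B), \max(A,B)]$ to conclude $|A^{-3/2} - B^{-3/2}| \lesssim |h|^{-5}|A-B| \lesssim \omega(|h|)/|h|^3$. For the kernel bound, the $A^{-3/2} - B^{-3/2}$ contribution is handled identically, and I split the remainder as
\begin{equation*}
A^{-5/2} p(p-r) - B^{-5/2} q(q+r) = A^{-5/2}\bigl[p(p-r) - q(q+r)\bigr] + \bigl[A^{-5/2} - B^{-5/2}\bigr]\,q(q+r).
\end{equation*}
The algebraic identity $p(p-r) - q(q+r) = (p+q)(p - q - r)$ together with the trivial estimate $|p - q - r| \lesssim |h|$ bounds the bracketed factor by $|h|^2\,\omega(|h|)$, giving the first summand the correct order $\omega(|h|)/|h|^3$. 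For the second summand, the mean value theorem applied to $t \mapsto t^{-5/2}$ yields $|A^{-5/2} - B^{-5/2}| \lesssim \omega(|h|)/|h|^5$, while $|q(q+r)| \lesssim |h|^2\,\omega(|h|)$, so the contribution is $\omega(|h|)^2/|h|^3 \lesssim \omega(|h|)/|h|^3$ using that $\omega$ is bounded.

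The integrated statements follow from the pointwise ones by multiplying through by $|h|$ and passing to polar coordinates on $\R^2$, which turns $\int_{|h| < |\xi|} \omega(|h|)/|h|^2\,dh$ into $2\pi \int_0^{|\xi|} \omega(r)/r\,dr$. The main obstacle is the bookkeeping in the kernel bound: one must exploit the cancellation $(p+q)(p-q-r)$ rather than a cruder triangle inequality, and keep sharp track of $|A^{-5/2} - B^{-5/2}|$, so that the final estimate is linear rather than quadratic in $\omega$.
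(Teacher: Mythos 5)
Your proof is correct and follows essentially the same route as the paper: line-integral difference bounds $|p+q|,|p-r|,|q+r|\lesssim |h|\,\omega(|h|)$ from the modulus hypothesis, followed by mean value estimates on the denominators and polar coordinates for the integrated statements. Two small remarks. First, the cancellation $p(p-r)-q(q+r)=(p+q)(p-q-r)$ is not actually needed: since $|p(p-r)|\lesssim |h|^2\omega(|h|)$ and $|q(q+r)|\lesssim |h|^2\omega(|h|)$ individually, each term $3A^{-5/2}p(p-r)$ and $3B^{-5/2}q(q+r)$ is separately $O(\omega(|h|)/|h|^3)$, and the paper simply applies the triangle inequality here; so your "main obstacle" is not an obstacle, though your sharper bookkeeping does no harm. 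Second, your final step bounds the cross term by $\omega(|h|)^2/|h|^3$ and then invokes "$\omega$ is bounded," which is not among the hypotheses \eqref{e:omegabound} (and the modulus actually used later, \eqref{e:omegadefn}, is unbounded). This is easily repaired: either use the crude bound $|q(q+r)|\leq |q|(|q|+|r|)\lesssim |h|^2$ in that summand, which already gives $\omega(|h|)/|h|^3$, or note that every gradient difference is also bounded by $2\|\nabla_x f\|_{L^\infty}$, so $\omega$ may be replaced throughout by $\min\{\omega,2\|\nabla_x f_0\|_{L^\infty}\}$ at the cost of a constant absorbed into $\lesssim$.
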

\begin{proof}
To begin, note that as 
\begin{equation}
|\delta_hf(x) + \delta_{-h}f(x) |  = \bigg|\left(\int\limits_0^1 \nabla f(x+sh)-\nabla f(x+(s-1)h)ds\right)\cdot h\bigg| \leq \omega(|h|)|h|, 
\end{equation}
it follows that 
\begin{equation}\label{e:driftasymbound}
\bigg|\frac{1}{(\delta_h f(x)^2 + |h|^2)^{3/2}} - \frac{1}{(\delta_{-h} f(x)^2 + |h|^2)^{3/2}}\bigg|\lesssim \frac{|\delta_h f(x)+\delta_{-h}f(x)|}{|h|^{4}} \lesssim \frac{\omega(|h|)}{|h|^3}.
\end{equation}

As we similarly have that $|\delta_hf(x) - \nabla f(x)\cdot h| \leq \omega(|h|)|h|$, it follows that 
\begin{equation}\label{e:Kasymbound}
\bigg|\frac{\delta_hf(x)(\delta_hf(x)-\nabla f(x)\cdot h)}{\delta_hf(x)^2+|h|^2} \bigg| \lesssim \omega(|h|).
\end{equation}
Recalling the equation for  $K$
\begin{equation}
K(x,h)  = \frac{1}{(\delta_h f(x)^2 + |h|^2)^{3/2}}\left(1- 3\frac{\delta_hf(x)(\delta_hf(x)-\nabla f(x)\cdot h)}{\delta_hf(x)^2+|h|^2} \right),
\end{equation}
we see by combining \eqref{e:driftasymbound} and \eqref{e:Kasymbound} that
\begin{equation}
|K(x,h)-K(x,-h)|\lesssim \frac{\omega(|h|)}{|h|^3},
\end{equation}
as well.  
\end{proof}

\subsection{Continuity Bounds}

\begin{lemma}\label{l:continuity}
Let $f$ satisfy \eqref{e:omegabound} and the kernel $K$ be as in \eqref{e:Kdefn}.  Then the drift and kernel $K$ satisfy the pointwise continuity bounds
\begin{equation}
\begin{split}
\bigg|\frac{1}{(\delta_hf(\xi/2)^2+|h|^2)^{3/2}} - \frac{1}{(\delta_hf(-\xi/2)^2+|h|^2)^{3/2}} \bigg|\lesssim \min\left\{ \frac{\omega(|\xi|)}{|h|^3},  \frac{\omega(|h|)|\xi|}{|h|^{4}}\right\} ,
\\ |K(\xi/2, h)-K(-\xi/2, h)|\lesssim \frac{\omega(|\xi|)}{|h|^3}.
\end{split}
\end{equation}
In particular, 
\begin{equation}
\int\limits_{|h|>|\xi|} \bigg| \frac{|h|}{(\delta_hf(\xi/2)^2+|h|^2)^{3/2}} - \frac{|h|}{(\delta_hf(-\xi/2)^2+|h|^2)^{3/2}} \bigg|dh \lesssim |\xi| \int\limits_{|\xi|}^\infty \frac{\omega(r)}{r^2}dr,
\end{equation}
\end{lemma}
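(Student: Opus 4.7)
The plan is to derive every estimate from a single two-sided control on the mixed difference $|\delta_h f(\xi/2) - \delta_h f(-\xi/2)|$. First I would observe the identity
\begin{equation*}
\delta_h f(\xi/2) - \delta_h f(-\xi/2) = \delta_\xi f(-\xi/2 + h) - \delta_\xi f(-\xi/2),
\end{equation*}
and note that $g(y) := \delta_\xi f(y)$ satisfies $\nabla g(y) = \nabla f(y+\xi) - \nabla f(y)$, hence $|\nabla g| \leq \omega(|\xi|)$ by \eqref{e:omegabound}. This yields $|\delta_h f(\xi/2) - \delta_h f(-\xi/2)| \leq |h|\,\omega(|\xi|)$. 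Symmetrically, writing
\begin{equation*}
\delta_h f(\xi/2) - \delta_h f(-\xi/2) = \int_{-1/2}^{1/2} \bigl[\nabla f(s\xi + h) - \nabla f(s\xi)\bigr] \cdot \xi \, ds
\end{equation*}
gives the complementary bound $|\delta_h f(\xi/2) - \delta_h f(-\xi/2)| \leq |\xi|\,\omega(|h|)$.

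For the first pointwise inequality I would apply the mean value theorem to the scalar function $a \mapsto (a^2 + |h|^2)^{-3/2}$. Its derivative is bounded by $3|a|/(a^2+|h|^2)^{5/2} \lesssim |h|^{-4}$ on the range $|a| \leq ||\nabla f_0||_{L^\infty}|h|$ guaranteed by Proposition \ref{p:maximumprinciple}. Substituting each of the two estimates above for $|\delta_h f(\xi/2) - \delta_h f(-\xi/2)|$ in turn yields the two sides of the desired minimum.

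The kernel estimate requires more care. Writing $K$ from \eqref{e:Kdefn} as a sum of two rational functions of $\delta_h f(x)$ and $\nabla f(x) \cdot h$, the first piece $(\delta_h f(x)^2+|h|^2)^{-3/2}$ is already handled by the previous step, contributing $\omega(|\xi|)/|h|^3$. For the correction term
\begin{equation*}
\frac{\delta_h f(x)\bigl(\delta_h f(x) - \nabla f(x)\cdot h\bigr)}{(\delta_h f(x)^2 + |h|^2)^{5/2}},
\end{equation*}
I would telescope the difference in $x$ across its three factors, using the pointwise size bounds $|\delta_h f| \lesssim |h|$ and $|\delta_h f - \nabla f\cdot h| \leq \omega(|h|)|h|$; the finite-difference control $|\delta_h f(\xi/2) - \delta_h f(-\xi/2)| \leq |h|\omega(|\xi|)$; the gradient estimate $|(\nabla f(\xi/2) - \nabla f(-\xi/2)) \cdot h| \leq \omega(|\xi|)|h|$; and an MVT bound for $(a^2+|h|^2)^{-5/2}$, whose derivative is dominated by $5|h|/(a^2+|h|^2)^{7/2} \lesssim |h|^{-6}$. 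Each resulting term collapses to a multiple of $\omega(|\xi|)/|h|^3$ after absorbing the trivial factor $\omega(|h|) \lesssim 1$.

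Finally, the integral bound follows by multiplying the $|\xi|\omega(|h|)/|h|^4$ branch of the minimum by $|h|$ and integrating in polar coordinates over $\R^2 \setminus B(0,|\xi|)$, which gives $|\xi|\int_{|\xi|}^\infty r\cdot \omega(r)/r^3\, dr = |\xi|\int_{|\xi|}^\infty \omega(r)/r^2\, dr$, as stated. The main obstacle is the kernel estimate itself: one must carefully track which of several factors carries the small modulus $\omega(|\xi|)$ versus the potentially large size $\omega(|h|)$, while ensuring that each of the telescoped pieces decays like $1/|h|^3$ rather than a slower rate.
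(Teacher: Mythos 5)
Your proof is correct, and for the drift estimate and the final integral bound it is essentially the paper's own argument: both rest on the two mean-value bounds $|\delta_h f(\xi/2)-\delta_h f(-\xi/2)|\le\min\{\omega(|\xi|)\,|h|,\ \omega(|h|)\,|\xi|\}$ followed by the mean value theorem for $a\mapsto(a^2+|h|^2)^{-3/2}$ on the range $|a|\lesssim|h|$. Where you genuinely diverge is the kernel bound. The paper does not telescope uniformly: for $|h|\le|\xi|$ it uses only $|K(x,h)-(\delta_hf(x)^2+|h|^2)^{-3/2}|\lesssim\omega(|h|)/|h|^3$ together with monotonicity $\omega(|h|)\le\omega(|\xi|)$, and for $|h|\ge|\xi|$ it telescopes the numerator of the correction term and then invokes \emph{concavity} of $\omega$ (so that $\omega(|h|)/|h|\le\omega(|\xi|)/|\xi|$) to convert the resulting $\omega(|h|)\,|\xi|/|h|$ into $\omega(|\xi|)$. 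Your single uniform telescoping avoids both the case split and the appeal to concavity, at the price of using the cruder size bound $|\delta_hf-\nabla f\cdot h|\lesssim|h|$ on the undifferenced factor; this works, and each of the three telescoped pieces does come out as $O(\omega(|\xi|)/|h|^3)$. One phrasing to fix: ``absorbing the trivial factor $\omega(|h|)\lesssim1$'' is not literally justified, since the modulus in \eqref{e:omegabound} need not be bounded (the one ultimately used in \eqref{e:omegadefn} grows logarithmically); the correct justification is that the quantity it majorizes, $|\delta_h\nabla_xf|$, is itself bounded by $2||\nabla_xf||_{L^\infty}\lesssim1$, so one may always replace $\omega(|h|)$ by $\min\{\omega(|h|),2||\nabla_x f||_{L^\infty}\}$ in these size estimates.
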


\begin{proof}
In order to bound the $x$-dependence of our kernel and drift, we first need to note that 
\begin{equation}
|\delta_hf(\xi/2) -\delta_h f(-\xi/2)| = \bigg|\left(\int\limits_0^1 \nabla_x f(\xi/2 +sh)-\nabla_x f(-\xi/2+sh)ds\right)\cdot h \bigg|\leq \omega(|\xi|)|h|,
\end{equation}
and 
\begin{equation}
\begin{split}
\bigg|\delta_hf(\xi/2) - \delta_h f(-\xi/2) \bigg|&=\bigg| (f(\xi/2+h)-f(\xi/2)) - (f(-\xi/2+h)-f(-\xi/2)) \bigg|
\\&=\bigg| \left(\int\limits_0^1 \nabla_x f(h+(s-1/2)\xi) - \nabla_x f((s-1/2)\xi)ds\right)\cdot \xi\bigg| \leq \omega(|h|)|\xi|.
\end{split}
\end{equation}

Hence it follows that 
\begin{equation}
\bigg|\frac{1}{(\delta_hf(\xi/2)^2+|h|^2)^{3/2}} - \frac{1}{(\delta_hf(-\xi/2)^2+|h|^2)^{3/2}} \bigg|\lesssim \frac{|\delta_h f(\xi/2)-\delta_hf(-\xi/2)|}{|h|^{4}} \lesssim \min\left\{ \frac{\omega(|\xi|)}{|h|^3},   \frac{\omega(|h|)|\xi|}{|h|^{4}}\right\}.
\end{equation}

As we already have that 
\begin{equation}
\bigg|K(x,h)-\frac{1}{(\delta_hf(x)^2+|h|^2)^{3/2}} \bigg| \lesssim \frac{\omega(|h|)}{|h|^{3}},
\end{equation}
we get immediately that for $|h|\leq |\xi|$ 
\begin{equation}
\bigg|K(\xi/2, h)-K(-\xi/2, h)\bigg|\lesssim \frac{\omega(|\xi|)}{|h|^3}.
\end{equation}

For $|h|\geq |\xi|$, we note that 
\begin{equation}
\begin{split}
\bigg|\frac{\delta_hf(\xi/2)(\delta_hf(\xi/2)-\nabla f(\xi/2)\cdot h)}{\delta_hf(\xi)^2+|h|^2} &- \frac{\delta_hf(-\xi/2)(\delta_hf(-\xi/2)-\nabla f(-\xi/2)\cdot h)}{\delta_hf(-\xi/2)^2+|h|^2}\bigg| 
\\ &\lesssim \frac{|\delta_h f(\xi/2)-\delta_h f(-\xi/2)|}{|h|} + \frac{|(\nabla f(\xi/2)-\nabla f(-\xi/2))\cdot h|}{|h|} 
\\ & \lesssim \frac{\omega(|h|)|\xi|}{|h|}+\omega(|\xi|).
\end{split}
\end{equation}
As by assumption $\omega$ is concave, the function $r \to \displaystyle\frac{\omega(r)}{r} = \frac{\omega(r)-\omega(0)}{r}$ is non increasing.  Hence, 
\begin{equation}
\frac{\omega(|h|)}{|h|}\leq \frac{\omega(|\xi|)}{|\xi|}, \qquad |h|\geq |\xi|.
\end{equation}
Thus for $|h|\geq |\xi|$, we have that 
\begin{equation}
|K(\xi/2, h)-K(-\xi/2, h)|\lesssim \frac{\omega(|h|)|\xi|}{|h|^{4}} + \frac{\omega(|\xi|)}{|h|^{3}}\lesssim \frac{\omega(|\xi|)}{|h|^3}.
\end{equation}
\end{proof}

\subsection{Diffusive Bounds}

\begin{lemma}\label{l:diffusion}
Let $f$ satisfy \eqref{e:omegabound} and the kernel $K$ be as in \eqref{e:Kdefn} satisfying the uniform ellipticity bounds \eqref{e:uniformellipticity}.  Then 
\begin{equation}
\begin{split}
\int\limits_{\R^{2}} \delta_h f_e(\xi/2)K(\xi/2, h) - &\delta_h f_e(-\xi/2)K(-\xi/2, h) dh - \lambda \int\limits_{\R^{2}} \frac{\delta_h f_e(\xi/2)-\delta_{h}f_e(-\xi/2)}{|h|^3} dh
\\& \lesssim \omega'(|\xi|)\int\limits_{0}^{|\xi|} \frac{\omega(r)}{r}dr + \omega(|\xi|)\int\limits_{|\xi|}^\infty \frac{\omega(|\xi|+r)-\omega(|\xi|)}{r^2}dr.
\end{split}
\end{equation}
\end{lemma}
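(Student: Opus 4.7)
The plan is to exploit the extremality of the pair $(\xi/2, -\xi/2)$ in the modulus condition \eqref{e:omegabound} in order to reduce the estimate to controlling a single correction term. From the modulus bound $|f_e(x) - f_e(y)| \leq \omega(|x-y|)$ together with the equality $f_e(\xi/2) - f_e(-\xi/2) = \omega(|\xi|)$, for every $h \in \R^2$ we have
\[
\delta_h f_e(\xi/2) - \delta_h f_e(-\xi/2) = [f_e(\xi/2 + h) - f_e(-\xi/2 + h)] - \omega(|\xi|) \leq 0,
\]
since the bracket is itself at most $\omega(|\xi|)$. Combined with the pointwise positivity $K(\xi/2, h) - \lambda/|h|^3 \geq 0$ from \eqref{e:uniformellipticity}, the left-hand side of the lemma can be rewritten as
\[
\int (\delta_h f_e(\xi/2) - \delta_h f_e(-\xi/2))(K(\xi/2,h) - \lambda/|h|^3)\,dh + \int \delta_h f_e(-\xi/2)(K(\xi/2,h) - K(-\xi/2,h))\,dh.
\]
The first integral is nonpositive, so it remains to bound the second integral $T_2$ from above. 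I would then split $T_2$ into the near field $|h| \leq |\xi|$ and the far field $|h| > |\xi|$.

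For the far field, I would decompose $\delta_h f_e(-\xi/2) = \omega(|\xi|) + [f_e(-\xi/2 + h) - f_e(\xi/2)]$, where the bracket is controlled by $\omega(|h - \xi|) \leq \omega(|\xi| + |h|)$ via the modulus bound and monotonicity of $\omega$. Applying Lemma~\ref{l:continuity}'s estimate $|K(\xi/2,h) - K(-\xi/2,h)| \lesssim \omega(|\xi|)/|h|^3$, passing to polar coordinates, and writing $\omega(|\xi| + r) = \omega(|\xi|) + (\omega(|\xi| + r) - \omega(|\xi|))$, the variable piece produces the desired far-field contribution $\omega(|\xi|) \int_{|\xi|}^\infty (\omega(|\xi| + r) - \omega(|\xi|))/r^2\,dr$. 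The constant piece $\omega(|\xi|)^2 \int_{|h|>|\xi|}(K(\xi/2,h)-K(-\xi/2,h))\,dh$ is controlled either by angular cancellation---the leading rotationally symmetric part of $K$ at infinity is $x$-independent, so its contribution to $K(\xi/2,h) - K(-\xi/2,h)$ vanishes after angular averaging---or by absorption into the near-field term using the concavity of $\omega$.

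For the near field, pointwise bounds alone produce divergent integrals at $h = 0$, so I would symmetrize in $h$ using $\delta_h f_e(-\xi/2) = \tfrac12 \delta_h^2 f_e(-\xi/2) + \tfrac12 [f_e(-\xi/2 + h) - f_e(-\xi/2 - h)]$, pairing the even and odd pieces against the corresponding parts of $K(\xi/2,h) - K(-\xi/2,h)$. Lemma~\ref{l:asymmetry} controls the asymmetric kernel $K_a$, and Lemma~\ref{l:continuity} controls the $x$-variation of the symmetric part $K_s$. Combining with the improved bound $|\delta_h^2 f_e(-\xi/2)| \lesssim \omega(|h|)$ and the concavity of $\omega$, the resulting integrals collect into the near-field contribution $\omega'(|\xi|) \int_0^{|\xi|} \omega(r)/r\,dr$. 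The main obstacle is precisely this near-field estimate: producing the sharp prefactor $\omega'(|\xi|)$ (rather than the cruder $\omega'(0)$ arising from naive $L^\infty$ bounds on the Hessian of $f$) requires a careful combination of Lemmas~\ref{l:asymmetry} and~\ref{l:continuity} with the concavity of $\omega$ to cancel intermediate singular contributions.
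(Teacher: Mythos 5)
Your overall skeleton (peel off the coercive part using $\delta_h f_e(\xi/2)-\delta_h f_e(-\xi/2)\leq 0$ and $K\geq\lambda/|h|^3$, then split the remaining kernel-difference term at $|h|=|\xi|$ and invoke Lemmas \ref{l:asymmetry} and \ref{l:continuity}) matches the paper, but both halves of your estimate have genuine gaps. In the far field, reducing everything to $T_2=\int\delta_h f_e(-\xi/2)\,(K(\xi/2,h)-K(-\xi/2,h))\,dh$ is already too lossy: your decomposition $\delta_h f_e(-\xi/2)=\omega(|\xi|)+[\cdots]$ leaves behind $\omega(|\xi|)^2\int_{|h|>|\xi|}|K(\xi/2,h)-K(-\xi/2,h)|\,dh\sim\omega(|\xi|)^2/|\xi|$, and neither of your proposed rescues works. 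There is no angular cancellation: the bound $|K(\xi/2,h)-K(-\xi/2,h)|\lesssim\omega(|\xi|)/|h|^3$ already concerns the $x$-difference of the kernels, whose leading contributions are quadratic expressions in $f$ with no sign structure after averaging in $h/|h|$. Absorption also fails: by concavity $\omega(|\xi|)^2/|\xi|\geq\omega'(|\xi|)\omega(|\xi|)$, and for the (unbounded) modulus \eqref{e:omegadefn} one checks $\omega(|\xi|)^2/|\xi|$ exceeds both terms on the right-hand side for large $|\xi|$; carrying such a term into Lemma \ref{l:omegacalc} would destroy the inequality in Case 2, since there the good term is only $-\tfrac{\lambda}{4}\omega(|\xi|)/|\xi|$. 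The paper avoids this by keeping \emph{both} rewritings of the integrand, $G=(\delta_h f_e(\xi/2)-\delta_h f_e(-\xi/2))K(\mp\xi/2,h)+\delta_h f_e(\pm\xi/2)(K(\xi/2,h)-K(-\xi/2,h))$, and choosing according to the sign of the kernel difference, so that its coefficient is always $\delta_h f_e(\xi/2)_+$ or $\delta_h f_e(-\xi/2)_-$; the crossing-point identity shows each of these is at most $\omega(|\xi|+|h|)-\omega(|\xi|)$, so the constant $\omega(|\xi|)$ never multiplies the kernel difference.

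In the near field your mechanism also does not close. Symmetrizing and using $|\delta_h^2 f_e(-\xi/2)|\lesssim\omega(|h|)$ against $|K(\xi/2,h)-K(-\xi/2,h)|\lesssim\omega(|\xi|)/|h|^3$ yields $\omega(|\xi|)\int_0^{|\xi|}\omega(r)r^{-2}\,dr$, which diverges logarithmically at $r=0$ because $\omega(r)\sim\omega'(0)r$; and even the convergent odd--odd pairing only produces $\int_0^{|\xi|}\omega(r)^2r^{-2}\,dr$, which carries the prefactor $\omega(|\xi|)/|\xi|$ rather than $\omega'(|\xi|)$. The missing ingredient is the one-sided second-order information available \emph{at the touching point}: since $\delta_h f_e(\pm\xi/2)\leq\omega(|\xi\pm h|)-\omega(|\xi|)$ (resp.\ $\geq$), concavity of $\omega$ gives
\begin{equation*}
\bigl(\delta_h f_e(\xi/2)-\omega'(|\xi|)\hat{\xi}\cdot h\bigr)_+ \;\leq\; \omega'(|\xi|)\bigl(|\xi+h|-|\xi|-\hat{\xi}\cdot h\bigr) \;\lesssim\; \frac{\omega'(|\xi|)}{|\xi|}|h|^2,
\end{equation*}
and similarly for the negative part at $-\xi/2$. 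Subtracting the common tangent line $\omega'(|\xi|)\hat{\xi}\cdot h$ from both increments, the quadratic remainder is integrable against $|h|^{-3}$ and produces $\omega'(|\xi|)\omega(|\xi|)$, while the linear term pairs with the $h$-antisymmetric part of the kernel (Lemma \ref{l:asymmetry}) to give $\omega'(|\xi|)\int_0^{|\xi|}\omega(r)r^{-1}\,dr$. You correctly identify that the $\omega'(|\xi|)$ prefactor is the crux, but the argument you sketch does not supply it.
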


%\section{Time Derivative Bounds}
%
%We remark that everything in sections 4, 5, and 6 relies only on the estimates proven in Lemmas \ref{l:asymmetry} and \ref{l:continuity}, as well as the uniform ellipticity of the kernel $K$.  
%
%\begin{lemma}
%Let $f$ be a solution of \eqref{e:fequation} and satisfy the crossing point assumption \eqref{e:omegabound}.  Then 
%\begin{equation}
%\begin{split}
%\frac{d}{dt}\left(f_e(\xi/2)-f_e(-\xi/2)\right) \leq& A\omega'(|\xi|)\left(\int\limits_0^{|\xi|} \frac{\omega(r)}{r}dr + |\xi| \int\limits_{|\xi|}^\infty \frac{\omega(r)}{r^2}dr\right) + A\omega(|\xi|)\int\limits_{|\xi|}^\infty \frac{\omega(|\xi|+r)-\omega(|\xi|)}{r^2}dr 
%\\&+\lambda \int\limits_0^{|\xi|} \frac{\delta_r \omega(|\xi|)+\delta_{-r} \omega(|\xi|)}{r^2}dr + \lambda\int\limits_{|\xi|}^\infty \frac{\omega(r+|\xi|)-\omega(r-|\xi|)-2\omega(|\xi|)}{r^2}dr
%\end{split}
%\end{equation}
%\end{lemma}

\begin{proof}

Let $G(\xi,h) = \delta_h f_e(\xi/2)K(\xi/2, h)-\delta_h f_e(-\xi/2)K(-\xi/2, h)$.  Then our goal is to bound 
\begin{equation}\label{e:trivialdiffusionsplit}
\int\limits_{\R^{d-1}} G(\xi, h) dh  = \int\limits_{|h|<|\xi|} G(\xi, h)dh + \int\limits_{|h|>|\xi|}G(\xi, h)dh,
\end{equation}
from above. We shall do so by bounded each of the two pieces on the right hand side of \eqref{e:trivialdiffusionsplit}.  

We shall start by bounding over the integral where $|h|>|\xi|$.  We first note that we can rewrite the sum defining $G$ in two different ways.  Namely, 
\begin{equation}
\begin{split}
G(\xi, h) &= (\delta_h f_e(\xi/2) - \delta_h f_e(-\xi/2))K(\xi/2, h) + \delta_{h}f_e(-\xi/2)(K(\xi/2, h)-K(-\xi/2, h))
\\&= (\delta_h f_e(\xi/2) - \delta_h f_e(-\xi/2))K(-\xi/2, h) + \delta_{h}f_e(\xi/2)(K(\xi/2, h)-K(-\xi/2, h))
\end{split}
\end{equation}
Recall by \eqref{e:omegabound} that  
\begin{equation}
\delta_h f_e(\xi/2) - \delta_h f_e(-\xi/2) = f_e(\xi/2+h)-f_e(-\xi/2+h)-\omega(|\xi|)\leq 0,
\end{equation}
for all $h\in \R^{2}$.  Hence as $K$ is uniformly elliptic, we get that
\begin{equation}\label{e:Gfirstbound}
\begin{split}
G(\xi, h) \leq& \lambda \frac{\delta_h f_e(\xi/2) - \delta_h f_e(-\xi/2)}{|h|^{3}} 
\\&+\delta_{h}f_e(\xi/2)_+(K(\xi/2, h)-K(-\xi/2, h))_+ + \delta_{h}f_e(-\xi/2)_{-}(K(\xi/2, h)-K(-\xi/2, h))_- 
\end{split}
\end{equation}
Again by \eqref{e:omegabound} we have that 
\begin{equation}\label{e:simpletouchbounds}
 \begin{split}
 \delta_hf(\xi/2)\leq \omega(|\xi+h|)-\omega(|\xi|)\leq \omega(|\xi|+|h|)-\omega(|\xi|),
 \\ \delta_hf(-\xi/2)\geq -\omega(|\xi-h|)+\omega(|\xi|)\geq -(\omega(|\xi|+|h|)-\omega(|\xi|)).
 \end{split}
 \end{equation}
Plugging \eqref{e:simpletouchbounds} into \eqref{e:Gfirstbound}, applying Lemma \ref{l:continuity}, and integrating over $|h|>|\xi|$ thus gives us that 
\begin{equation}
\begin{split}
\int\limits_{|h|>|\xi|}G(\xi, h)dh &\leq \int\limits_{|h|>|\xi|} \lambda \frac{\delta_h f_e(\xi/2) - \delta_h f_e(-\xi/2)}{|h|^{3}} + (\omega(|\xi|+|h|)-\omega(|\xi|))\ |K(\xi/2,h)-K(-\xi/2, h)| dh 
\\&\leq \int\limits_{|h|>|\xi|}\lambda \frac{\delta_h f_e(\xi/2) - \delta_h f_e(-\xi/2)}{|h|^{3}} + (\omega(|\xi|+|h|)-\omega(|\xi|))\frac{A\omega(|\xi|)}{|h|^3}dh, 
\\&\leq \lambda\int\limits_{|h|>|\xi|} \frac{\delta_h f_e(\xi/2) - \delta_h f_e(-\xi/2)}{|h|^{3}}dh +A\omega(|\xi|)\int\limits_{|\xi|}^\infty  \frac{\omega(|\xi|+r)-\omega(|\xi|)}{r^2}dr, 
\end{split}
\end{equation}
for some universal constant $A$ depending only on $||\nabla_x f_0||_{L^\infty}$.

Now we are left to bound the integral of $G$ for $|h|<|\xi|$.  We begin by adding and subtracting a linear term from $G$ to get
\begin{equation}
\begin{split}
G(\xi, h) &= (\delta_h f_e(\xi/2)-\omega'(|\xi|)\hat{\xi}\cdot h)K(\xi/2, h)-(\delta_h f_e(-\xi/2)-\omega'(\xi)\hat{\xi}\cdot h)K(-\xi/2, h) 
\\& \ + \omega'(\xi)\hat{\xi}\cdot h (K(\xi/2, h)-K(-\xi/2, h)).
\end{split}
\end{equation}

Similarly to \eqref{e:Gfirstbound} we can then bound 
\begin{equation}
\begin{split}
(\delta_h f_e(\xi/2)-\omega'(|\xi|)\hat{\xi}\cdot h)K(\xi/2, h)&-(\delta_h f_e(-\xi/2)-\omega'(\xi)\hat{\xi}\cdot h)K(-\xi/2, h)  \leq \lambda \frac{\delta_h f_e(\xi/2) - \delta_h f_e(-\xi/2)}{|h|^{3}} 
\\ &+(\delta_{h}f_e(\xi/2) - \omega'(|\xi|)\hat{\xi}\cdot h)_+(K(\xi/2, h)-K(-\xi/2, h))_+ 
\\&+ (\delta_{h}f_e(-\xi/2)-\omega'(|\xi|)\hat{\xi}\cdot h)_{-}(K(\xi/2, h)-K(-\xi/2, h))_- .
\end{split}
\end{equation}
In order to bound the error terms, we use that $\omega$ is a concave function of one variable to thus get
\begin{equation}
\omega(|h\pm \xi|)-\omega(|\xi|)\leq \omega'(|\xi|) \left( |\xi\pm h| - |\xi|\right) \leq \omega'(|\xi|)\hat{\xi}\cdot h +  \frac{\omega'(|\xi|)}{|\xi|} |h|^2.
\end{equation}
Hence, we have that 
\begin{equation}
\begin{split}
G(\xi,h) & \leq \lambda \frac{\delta_h f_e(\xi/2) - \delta_h f_e(-\xi/2)}{|h|^{3}} 
\\ &\  + \omega'(\xi)\hat{\xi}\cdot h (K(\xi/2, h)-K(-\xi/2, h))+\frac{\omega'(|\xi|)}{|\xi|} |h|^2 |K(\xi/2, h)-K(-\xi/2, h)|.
\end{split}
\end{equation}
Applying Lemmas \ref{l:asymmetry} and \ref{l:continuity} and integrating in space, we get 
\begin{equation}
\begin{split}
\int\limits_{|h|\leq |\xi|}G(\xi, h)dh - \lambda \int\limits_{|h|\leq |\xi|} \frac{\delta_h f_e(\xi/2) - \delta_h f_e(-\xi/2)}{|h|^{3}} 
&\lesssim  \omega'(|\xi|)\left(\int\limits_{|h|<|\xi|} \frac{\omega(|h|)}{|h|^{2}} + \frac{\omega(|\xi|)}{|\xi|} \frac{1}{|h|}dh\right)
\\&\lesssim \omega'(|\xi|)\left(\int\limits_0^{|\xi|} \frac{\omega(r)}{r}dr + \omega(|\xi|)\right)
\\& \lesssim \omega'(|\xi|)\int\limits_0^{|\xi|} \frac{\omega(r)}{r}dr,
\end{split}
\end{equation}
where the last inequality $\omega(|\xi|)\leq \displaystyle\int\limits_0^{|\xi|} \frac{\omega(r)}{r}dr$ is again due to the concavity of $\omega$.

\end{proof}

%%%%%%%%%%%%%%%%%%%%%%%%%%%%%%%%%%%%%%%%%%%%%%%%%%%%%%%%%%%%%%%%%%%%%

\section{Modulus Inequality}\label{s:modulusinequality}

\begin{lemma}\label{l:timederivbound}
Let $f:[0,T]\times \R^{2}\to \R$ be a smooth solution of the Muskat equation satisfy the crossing point assumption \eqref{e:omegabound} at some fixed time $t_0\in (0,T)$.  Assume the kernel $K$ defined in \eqref{e:Kdefn} satisfies the uniform ellipticity bounds \eqref{e:uniformellipticity}.  Then at the crossing point, 
\begin{equation}\label{e:timederivbound}
\begin{split}
\frac{d}{dt}\left(f_e(t,\xi/2)-f_e(t,-\xi/2)\right)\bigg|_{t=t_0} &\leq  A\omega'(|\xi|)\left(\int\limits_0^{|\xi|} \frac{\omega(r)}{r}dr + |\xi| \int\limits_{|\xi|}^\infty \frac{\omega(r)}{r^2}dr\right) + A\omega(|\xi|)\int\limits_{|\xi|}^\infty \frac{\omega(|\xi|+r)-\omega(|\xi|)}{r^2}dr 
\\&+c\lambda \int\limits_0^{|\xi|} \frac{\delta_r \omega(|\xi|)+\delta_{-r} \omega(|\xi|)}{r^2}dr + c\lambda\int\limits_{|\xi|}^\infty \frac{\omega(r+|\xi|)-\omega(r-|\xi|)-2\omega(|\xi|)}{r^2}dr,
\end{split}
\end{equation}
for some dimensional constant $c$ and constant $A$ depending only on $||\nabla_x f||_{L^\infty}$.  
\end{lemma}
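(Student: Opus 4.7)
The plan is to evaluate equation \eqref{e:feequation} at both $\xi/2$ and $-\xi/2$, subtract, and then combine the first-order conditions at the crossing point with the asymmetry, continuity, and diffusion estimates already proved. Since the function $(x,y)\mapsto f_e(t_0,x)-f_e(t_0,y)-\omega(|x-y|)$ attains its global maximum (value $0$) at $(\xi/2,-\xi/2)$, standard optimality in $x$ and $y$ gives $\nabla f_e(t_0,\xi/2)=\nabla f_e(t_0,-\xi/2)=\omega'(|\xi|)\hat\xi$. The drift contribution to the subtracted equation therefore collapses to $-\omega'(|\xi|)\hat\xi\cdot\bigl[\int h(\delta_hf(\xi/2)^2+|h|^2)^{-3/2}dh-\int h(\delta_hf(-\xi/2)^2+|h|^2)^{-3/2}dh\bigr]$, which I estimate by splitting into $|h|<|\xi|$ and $|h|>|\xi|$. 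In the near-field I symmetrize each of the two integrals in $h\to -h$ (the odd part alone would vanish) and invoke Lemma \ref{l:asymmetry}, producing the term $\omega'(|\xi|)\int_0^{|\xi|}\omega(r)/r\,dr$. In the far-field I do not symmetrize, instead using the stronger alternative $\omega(|h|)|\xi|/|h|^4$ from Lemma \ref{l:continuity}, producing $\omega'(|\xi|)|\xi|\int_{|\xi|}^\infty\omega(r)/r^2\,dr$.

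For the diffusion difference I apply Lemma \ref{l:diffusion} directly; this reduces the full diffusion to the clean symmetric integral $\lambda\int(\delta_hf_e(\xi/2)-\delta_hf_e(-\xi/2))/|h|^3\,dh$ plus the two declared error terms $\omega'(|\xi|)\int_0^{|\xi|}\omega(r)/r\,dr$ and $\omega(|\xi|)\int_{|\xi|}^\infty(\omega(|\xi|+r)-\omega(|\xi|))/r^2\,dr$. The remaining task — the principal obstacle of the lemma — is to upper bound the clean integral by the negative dissipation terms in \eqref{e:timederivbound}. Two touching inequalities from the modulus assumption \eqref{e:omegabound} are essential: applying $\omega$ to the cross pairs $(\xi/2+h,-\xi/2)$ and $(-\xi/2+h,\xi/2)$ gives $\delta_hf_e(\xi/2)-\delta_hf_e(-\xi/2)\leq \omega(|\xi+h|)+\omega(|\xi-h|)-2\omega(|\xi|)$, whereas applying it directly to $(\xi/2+h,-\xi/2+h)$ gives the trivial sign bound $\leq 0$.

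For $|h|<|\xi|$ I use the first touching inequality, switch to polar coordinates $h=r\theta$, and reduce the spherical average to the one-dimensional symmetric second difference $\omega(|\xi|+r)+\omega(|\xi|-r)-2\omega(|\xi|)$ by exploiting the concavity of $\omega$ along the radial direction $\hat\xi$; combined with the $r^{-3}dh=r^{-2}\,dr\,d\theta$ weight of the 2D kernel, this yields the first dissipation term. For $|h|>|\xi|$ I instead use the sign bound together with the subadditivity $\omega(r+|\xi|)\leq\omega(r-|\xi|)+\omega(2|\xi|)\leq\omega(r-|\xi|)+2\omega(|\xi|)$ to extract the signed asymmetric difference $\omega(r+|\xi|)-\omega(r-|\xi|)-2\omega(|\xi|)\leq 0$, again picking up a $1/r^2$ factor from the 2D kernel; this produces the second dissipation term. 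The hardest part of this reduction is verifying that the angular integration really collapses to a 1D second-difference of $\omega$ rather than producing positive-sign defects in the orthogonal directions — this is where the concavity of $\omega$ and the precise $1/|h|^3$ decay of the Muskat kernel are both used crucially. Summing the four contributions gives \eqref{e:timederivbound}.
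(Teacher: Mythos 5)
Your argument matches the paper's proof step for step: the first-order conditions $\nabla f_e(\pm\xi/2)=\omega'(|\xi|)\hat{\xi}$ at the crossing point, the near/far splitting of the drift term handled by Lemmas \ref{l:asymmetry} and \ref{l:continuity}, and the reduction of the diffusion difference to $\lambda\int(\delta_hf_e(\xi/2)-\delta_hf_e(-\xi/2))|h|^{-3}dh$ plus the two error terms via Lemma \ref{l:diffusion}. The only divergence is at the final step, where the paper simply invokes Lemma \ref{l:Kiselevrearrangement} (cited from \cite{Kiselev, KiselevIntRearrange}) to convert that clean symmetric integral into the two one-dimensional dissipation terms, whereas you sketch the rearrangement from scratch; your two touching inequalities are correct, but the ``angular collapse'' you flag as the hardest point is precisely the content of the cited lemma --- for $h$ nearly orthogonal to $\xi$ both $|\xi\pm h|$ exceed $|\xi|$ and the cross-pair bound $\omega(|\xi+h|)+\omega(|\xi-h|)-2\omega(|\xi|)$ can be positive, so one must discard those directions using the sign bound $\leq 0$ and retain the second difference only on a sector about $\pm\hat{\xi}$, which is where the dimensional constant $c_d$ arises.
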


To prove Lemma \ref{l:timederivbound}, we simply use the equation for $f_e$ \eqref{e:feequation}, our crossing point assumption \eqref{e:omegabound}, the estimates in lemmas \ref{l:asymmetry}, \ref{l:continuity}, \ref{l:diffusion} along side one final estimate due to Kiselev et al 
\begin{lemma}(See \cite{Kiselev} or \cite{KiselevIntRearrange} for more general case) \label{l:Kiselevrearrangement}
Let $f$ satisfy the crossing point assumptions \eqref{e:omegabound}.  Then 
\begin{equation}
\int\limits_{\R^{d-1}} \frac{\delta_h f_e(\xi/2)-\delta_h f_e(-\xi/2)}{|h|^d} dh \leq c_d \int\limits_0^{|\xi|} \frac{\delta_r \omega(|\xi|)+\delta_{-r} \omega(|\xi|)}{r^2}dr + c_d\int\limits_{|\xi|}^\infty \frac{\omega(r+|\xi|)-\omega(r-|\xi|)-2\omega(|\xi|)}{r^2}dr.
\end{equation}
\end{lemma}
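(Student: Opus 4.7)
My plan is to adapt the Kiselev--Nazarov--Volberg rearrangement argument from \cite{Kiselev}. The starting point is to exploit the symmetry $h \mapsto -h$ of the kernel $|h|^{-d}$ together with the crossing-point identity $f_e(\xi/2) - f_e(-\xi/2) = \omega(|\xi|)$ from \eqref{e:omegabound} to rewrite the integrand of the left-hand side in the manifestly symmetric form
\[\frac{\delta_h f_e(\xi/2) - \delta_h f_e(-\xi/2)}{|h|^d} = \frac{f_e(\xi/2 + h) + f_e(\xi/2 - h) - f_e(-\xi/2 + h) - f_e(-\xi/2 - h) - 2\omega(|\xi|)}{2|h|^d},\]
valid after pairing $h$ with $-h$. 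I would then split the integration into the inner region $|h| < |\xi|$ and outer region $|h| \geq |\xi|$, matching the two integrals on the right-hand side.

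For the inner region, I would chain the modulus bound $|\delta_k f_e(x)| \leq \omega(|k|)$ through the extremal pair $\pm\xi/2$. The estimates
\[f_e(\xi/2 + h) \leq f_e(-\xi/2) + \omega(|\xi + h|), \qquad f_e(-\xi/2 + h) \geq f_e(\xi/2) - \omega(|\xi - h|),\]
together with the crossing-point identity, yield the pointwise bound
\[\delta_h f_e(\xi/2) - \delta_h f_e(-\xi/2) \leq \omega(|\xi + h|) + \omega(|\xi - h|) - 2\omega(|\xi|).\]
Passing to polar coordinates $h = r\sigma$ with $\sigma \in S^{d-2}$ and using concavity of $\omega$ to control the angular average of $\omega(|\xi + r\sigma|) + \omega(|\xi - r\sigma|)$ by the one-dimensional quantity $\omega(|\xi| + r) + \omega(|\xi| - r)$ will then produce the first integral on the right, with the dimensional constant $c_d$ absorbing the surface area of $S^{d-2}$.

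For the outer region, I would instead pair the four points $\pm\xi/2 \pm h$ across the diagonals of the parallelogram they form. The modulus gives
\[f_e(\xi/2 + h) - f_e(-\xi/2 - h) \leq \omega(|\xi + 2h|), \qquad f_e(\xi/2 - h) - f_e(-\xi/2 + h) \leq \omega(|\xi - 2h|),\]
so the symmetrized numerator is bounded by $\omega(|\xi + 2h|) + \omega(|\xi - 2h|) - 2\omega(|\xi|)$. After the change of variable $r = |h|$ and a spherical averaging, concavity of $\omega$ on the segment joining $|\xi - 2h|$ and $|\xi + 2h|$ extracts the asymmetric structure $\omega(r + |\xi|) - \omega(r - |\xi|)$, giving the second integral.

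The main obstacle is that in dimension $d - 1 \geq 2$ the inner-region pointwise bound $\omega(|\xi + h|) + \omega(|\xi - h|) - 2\omega(|\xi|)$ need not be nonpositive: for $h \perp \xi$ one has $|\xi \pm h| = \sqrt{|\xi|^2 + |h|^2} > |\xi|$, so the pointwise bound can be positive even though the original integrand $\delta_h f_e(\xi/2) - \delta_h f_e(-\xi/2)$ is always nonpositive. The rearrangement argument must therefore combine the pointwise bound with the trivial bound $\delta_h f_e(\xi/2) - \delta_h f_e(-\xi/2) \leq 0$ and integrate the minimum of the two, thereby reducing the $(d-1)$-dimensional integral to its one-dimensional analogue along the $\xi$-axis. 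Correctly executing this angular comparison—so that the net spherical average on $|h| = r$ is dominated by $c_d'[\omega(|\xi| + r) + \omega(|\xi| - r) - 2\omega(|\xi|)]$ inside and $c_d'[\omega(r + |\xi|) - \omega(r - |\xi|) - 2\omega(|\xi|)]$ outside—is the technical core of the lemma, and is precisely what is carried out in \cite{Kiselev, KiselevIntRearrange}.
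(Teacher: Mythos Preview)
The paper does not prove this lemma; it is quoted directly from \cite{Kiselev, KiselevIntRearrange}. Your sketch captures the right overall architecture---symmetrize in $h$, split at $|h|=|\xi|$, route the differences through the extremal pair $\pm\xi/2$---and your treatment of the inner region is correct: the chain $\delta_h f_e(\xi/2)\le \omega(|\xi+h|)-\omega(|\xi|)$ together with its mirror gives the second--difference bound $\omega(|\xi+h|)+\omega(|\xi-h|)-2\omega(|\xi|)$, which along the $\hat\xi$ axis is exactly $\delta_r\omega(|\xi|)+\delta_{-r}\omega(|\xi|)$.

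The genuine gap is in the outer region. Your diagonal pairing yields
\[
\omega(|\xi+2h|)+\omega(|\xi-2h|)-2\omega(|\xi|),
\]
and for $|h|>|\xi|$ along $\hat\xi$ this is $\omega(2|h|+|\xi|)+\omega(2|h|-|\xi|)-2\omega(|\xi|)$, which is eventually \emph{positive} (for bounded $\omega$ it tends to $2\|\omega\|_\infty-2\omega(|\xi|)>0$). No concavity manipulation turns $+\omega(r-|\xi|)$ into $-\omega(r-|\xi|)$; the minus sign in the target integrand $\omega(r+|\xi|)-\omega(r-|\xi|)-2\omega(|\xi|)$ cannot be reached by any pointwise bound on $\phi(h)=\delta_h f_e(\xi/2)-\delta_h f_e(-\xi/2)$. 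Indeed, one checks that the sharpest pointwise bounds available give only $\phi(h)\le\min\{0,\ \omega(|h|+|\xi|)+\omega(|h|-|\xi|)-2\omega(|\xi|)\}$, and the paper's outer integrand is strictly below this. The correct argument is a genuine integral rearrangement: one observes that the right-hand side is, up to the constant $c_d$, the one-dimensional fractional Laplacian of the \emph{odd} extension of $\omega$ evaluated at $|\xi|$, and proves that among all $(d{-}1)$-dimensional configurations with modulus $\omega$ touching at $\pm\xi/2$, the one-dimensional profile $x\mapsto \omega_{\mathrm{odd}}(x\cdot\hat\xi)$ maximizes $\int\phi(h)|h|^{-d}\,dh$. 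This is the content of the rearrangement lemma in \cite{KiselevIntRearrange}, and it is exactly the step you defer; but the mechanism you describe for the outer piece (diagonal pairing plus concavity) is not the one that produces the minus sign.
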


\begin{proof} (Lemma \ref{l:timederivbound})

To begin, note by our crossing point assumption that 
\begin{equation}
\delta_h f_e(x) \leq \omega(|h|) \ \quad \forall x,h\in \R^{2}, \qquad f_e(\xi/2)-f_e(-\xi/2) = \omega(|\xi|).  
\end{equation}
Thus for any $h\in \R^{2}$, we get the bound 
\begin{equation}
f_e(\xi/2+h)-f_e(\xi/2) = f_e(\xi/2+h) - f_e(-\xi/2) - \omega(|\xi|) \leq \omega(|\xi+h|)-\omega(|\xi|),
\end{equation}
with equality at $h=0$.  Hence 
\begin{equation}
\nabla f_e(\xi/2) = \nabla \omega(|\xi|) = \omega'(|\xi|)\hat{\xi}.
\end{equation}
The same argument also tells us 
\begin{equation}
f_e(-\xi/2+h)-f_e(-\xi/2)\geq \omega(|\xi|)-\omega(|\xi-h|), \qquad \Rightarrow \qquad \nabla f_e(-\xi/2) = \omega'(|\xi|)\hat{\xi}.
\end{equation}
Using that $f_e$ solves the equation \eqref{e:feequation} and $\nabla_x f_e(\pm \xi/2) = \omega'(|\xi|)\hat{\xi}$, we thus get that 
\begin{equation}\label{e:timederivboundproof}
\begin{split}
\frac{d}{dt}\left(f_e(t,\xi/2)-f_e(t,-\xi/2)\right)\bigg|_{t=t_0} =&  \ \omega'(|\xi) \int\limits_{\R^{2}} \frac{h\cdot \hat{\xi}}{(\delta_hf(\xi/2)^2+|h|^2)^{3/2}} -\frac{h\cdot \hat{\xi}}{(\delta_hf(-\xi/2)^2+|h|^2)^{3/2}} dh
\\&+ \int\limits_{R^{2}} \delta_h f_e(\xi/2)K(\xi/2, h)-\delta_h f_e(-\xi/2)K(-\xi/2, h)dh.
\end{split}
\end{equation}
Rewriting the integral of the drift term as 
\begin{equation}
\begin{split}
\int\limits_{\R^{2}} \frac{h\cdot \hat{\xi}}{(\delta_hf(\xi/2)^2+|h|^2)^{3/2}} &-\frac{h\cdot \hat{\xi}}{(\delta_hf(-\xi/2)^2+|h|^2)^{3/2}} dh  = \int\limits_{|h|>|\xi|} \frac{h\cdot \hat{\xi}}{(\delta_hf(\xi/2)^2+|h|^2)^{3/2}} -\frac{h\cdot \hat{\xi}}{(\delta_hf(-\xi/2)^2+|h|^2)^{3/2}} dh 
\\&+\frac{1}{2}\int\limits_{|h|<|\xi|}  \frac{h\cdot \hat{\xi}}{(\delta_hf(\xi/2)^2+|h|^2)^{3/2}} - \frac{h\cdot \hat{\xi}}{(\delta_{-h}f(\xi/2)^2+|h|^2)^{3/2}} dh
\\&+\frac{1}{2}\int\limits_{|h|<|\xi|}\frac{h\cdot \hat{\xi}}{(\delta_{-h}f(-\xi/2)^2+|h|^2)^{3/2}}  -\frac{h\cdot \hat{\xi}}{(\delta_hf(-\xi/2)^2+|h|^2)^{3/2}} dh,
\end{split}
\end{equation}
and applying lemmas \ref{l:asymmetry} and \ref{l:continuity}, we see that 
\begin{equation}\label{e:driftbound}
 \omega'(|\xi) \int\limits_{\R^{2}} \frac{h\cdot \hat{\xi}}{(\delta_hf(\xi/2)^2+|h|^2)^{3/2}} -\frac{h\cdot \hat{\xi}}{(\delta_hf(-\xi/2)^2+|h|^2)^{3/2}} dh\lesssim \omega'(|\xi|)\left(\int\limits_0^{|\xi|}\frac{\omega(r)}{r}dr + |\xi|\int\limits_{|\xi|}^\infty \frac{\omega(r)}{r^2}dr\right).
\end{equation}
By Lemma \ref{l:diffusion}, we can bound the difference in diffusions as 
\begin{equation}\label{e:diffusionbound}
\begin{split}
\int\limits_{R^{2}} \delta_h f_e(\xi/2)K(\xi/2, h)-\delta_h f_e(-\xi/2)K(-\xi/2, h)dh &- \lambda \int\limits_{\R^{2}}\frac{\delta_h f_e(\xi/2)-\delta_h f_e(-\xi/2)}{|h|^3} dh 
\\&\lesssim  \omega'(|\xi|)\int\limits_{0}^{|\xi|} \frac{\omega(r)}{r}dr + \omega(|\xi|)\int\limits_{|\xi|}^\infty \frac{\omega(|\xi|+r)-\omega(|\xi|)}{r^2}dr.
\end{split}
\end{equation}
Finally, applying Lemma \ref{l:Kiselevrearrangement} and plugging \eqref{e:driftbound}, \eqref{e:diffusionbound} into \eqref{e:timederivboundproof} gives us \eqref{e:timederivbound}
\end{proof}

%From Kiselev paper, we have that 
%\begin{equation}\label{e:modulusbound}
%\int\limits_{\R^{d-1}} \frac{\delta_h f_e(\xi/2)-\delta_h f_e(-\xi/2)}{|h|^d} dh \leq c\int\limits_0^{|\xi|} \frac{\delta_r \omega(|\xi|)+\delta_{-r} \omega(|\xi|)}{r^2}dr + c\int\limits_{|\xi|}^\infty \frac{\omega(r+|\xi|)-\omega(r-|\xi|)-2\omega(|\xi|)}{r^2}dr.
%\end{equation}
%for some dimensional constant $c$.  

%Abusing notation by replacing $\lambda$ with $c\lambda$, we thus have that 
%\begin{equation}
%\begin{split}
%\frac{d}{dt}\left(f_e(\xi/2)-f_e(-\xi/2)\right) \leq& A\omega'(|\xi|)\left(\int\limits_0^{|\xi|} \frac{\omega(r)}{r}dr + |\xi| \int\limits_{|\xi|}^\infty \frac{\omega(r)}{r^2}dr\right) + A\omega(|\xi|)\int\limits_{|\xi|}^\infty \frac{\omega(|\xi|+r)-\omega(|\xi|)}{r^2}dr 
%\\&+\lambda \int\limits_0^{|\xi|} \frac{\delta_r \omega(|\xi|)+\delta_{-r} \omega(|\xi|)}{r^2}dr + \lambda\int\limits_{|\xi|}^\infty \frac{\omega(r+|\xi|)-\omega(r-|\xi|)-2\omega(|\xi|)}{r^2}dr
%\end{split}
%\end{equation}

We are now nearly ready to complete the breakthrough argument of section \ref{s:breakthrough}.  If our goal was to prove propagation of a modulus of continuity $\omega$ rather than the generation of one $\rho$, it would suffice to construct some function $\omega$ such that 
\begin{equation}
\begin{split}
A\omega'(|\xi|)\left(\int\limits_0^{|\xi|} \frac{\omega(r)}{r}dr + |\xi| \int\limits_{|\xi|}^\infty \frac{\omega(r)}{r^2}dr\right) + A\omega(|\xi|)\int\limits_{|\xi|}^\infty \frac{\omega(|\xi|+r)-\omega(|\xi|)}{r^2}dr 
\\+c\lambda \int\limits_0^{|\xi|} \frac{\delta_r \omega(|\xi|)+\delta_{-r} \omega(|\xi|)}{r^2}dr + c\lambda\int\limits_{|\xi|}^\infty \frac{\omega(r+|\xi|)-\omega(r-|\xi|)-2\omega(|\xi|)}{r^2}dr<0,
\end{split}
\end{equation}
which would give the equivalent contradiction to \eqref{e:finalinequality}.  

As our goal is generation of a modulus of continuity though, we will need to prove the (marginally) stronger inequality 
\begin{equation}
\begin{split}
A\omega'(|\xi|)\left(\int\limits_0^{|\xi|} \frac{\omega(r)}{r}dr + |\xi| \int\limits_{|\xi|}^\infty \frac{\omega(r)}{r^2}dr\right) + A\omega(|\xi|)\int\limits_{|\xi|}^\infty \frac{\omega(|\xi|+r)-\omega(|\xi|)}{r^2}dr 
\\+c\lambda \int\limits_0^{|\xi|} \frac{\delta_r \omega(|\xi|)+\delta_{-r} \omega(|\xi|)}{r^2}dr + c\lambda\int\limits_{|\xi|}^\infty \frac{\omega(r+|\xi|)-\omega(r-|\xi|)-2\omega(|\xi|)}{r^2}dr<-\omega'(|\xi|)\omega(|\xi|).
\end{split}
\end{equation}

Luckily, in \cite{Kiselev}, the authors were able to prove that 
\begin{lemma}\label{l:Kiselev}\cite{Kiselev}

Let $\omega:[0,\infty)\to [0,\infty)$ be the modulus of continuity defined by 
\begin{equation}\label{e:omegadefn}
\left\{\begin{array}{cl} \omega(\xi) = \xi-\xi^{3/2}, & 0\leq \xi \leq \delta \\ \omega'(\xi) = \displaystyle\frac{\gamma}{\xi(4+\log(\xi/\delta))}, & \xi\geq \delta \end{array}\right. .
\end{equation}
Then this modulus satisfies
\begin{equation}\label{e:Kiselevcalc}
\begin{split}
A\omega'(|\xi|)&\left(\int\limits_0^{|\xi|} \frac{\omega(r)}{r}dr + |\xi|\int\limits_{|\xi|}^\infty \frac{\omega(r)}{r^2} dr \right) 
\\&+\frac{c\lambda}{2}\int\limits_0^{|\xi| }\frac{\delta_r\omega(|\xi|) + \delta_{-r}\omega(|\xi|)}{r^2}dr + \frac{c\lambda}{2}\int\limits_{|\xi|}^\infty \frac{\omega(r+|\xi|)-\omega(r-|\xi|)-2\omega(|\xi|)}{r^2}dr < 0,
\end{split}
\end{equation}
for all $|\xi|>0$, so long as $\delta$ is taken sufficiently small depending on $\displaystyle\frac{A}{c\lambda}$, and $\gamma $ is sufficiently small depending on $\delta, \displaystyle\frac{A}{c\lambda}$.
\end{lemma}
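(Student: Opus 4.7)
The plan is to verify \eqref{e:Kiselevcalc} separately in the two regimes carved out by the piecewise construction of $\omega$, namely $|\xi|\leq\delta$ and $|\xi|>\delta$. In each regime I would extract the leading-order behavior of the drift terms (multiplied by $A$) and of the two dissipation integrals (multiplied by $c\lambda$), and show that the strictly negative dissipation dominates. The design of $\omega$ in \eqref{e:omegadefn} is engineered for exactly this: the singularity $\omega''(r)=-\tfrac{3}{4}r^{-1/2}$ produces very strong dissipation near zero, while the slow decay $\omega'(r)\sim \gamma/(r\log(r/\delta))$ keeps the drift integrals under control at large scales.

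For $|\xi|\leq\delta$, one has $\omega(|\xi|)\approx|\xi|$, $\omega'(|\xi|)\approx 1$, and a second-order Taylor expansion around $r=0$ yields
\begin{equation*}
\int_0^{|\xi|}\frac{\delta_r\omega(|\xi|)+\delta_{-r}\omega(|\xi|)}{r^2}\,dr \;\lesssim\; \omega''(|\xi|)\cdot|\xi| \;\sim\; -|\xi|^{1/2}.
\end{equation*}
The drift pieces $\omega'(|\xi|)\int_0^{|\xi|}\omega(r)/r\,dr$ and $|\xi|\omega'(|\xi|)\int_{|\xi|}^\infty\omega(r)/r^2\,dr$, together with the interaction term $A\omega(|\xi|)\int_{|\xi|}^\infty(\omega(|\xi|+r)-\omega(|\xi|))/r^2\,dr$, are each of order $O(|\xi|)$, using concavity of $\omega$ and the logarithmic convergence of the far-field integrals. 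Choosing $\delta$ small therefore forces the dissipation of size $|\xi|^{1/2}$ to dominate the $O(|\xi|)$ drift uniformly on $(0,\delta]$.

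For $|\xi|>\delta$, concavity of $\omega$ and the explicit form of $\omega'$ make the far-field dissipation $\int_{|\xi|}^\infty(\omega(r+|\xi|)-\omega(r-|\xi|)-2\omega(|\xi|))/r^2\,dr$ strictly negative and comparable to $-\omega'(|\xi|)$ up to logarithmic factors, while the drift pieces scale as $\omega'(|\xi|)\cdot \gamma\log(|\xi|/\delta)$, which gets absorbed into the logarithm appearing in the denominator of $\omega'$. Choosing $\gamma$ small, depending on $\delta$ and on the ratio $A/(c\lambda)$, yields the required dominance.

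The main obstacle is the bookkeeping across the transition $|\xi|\sim\delta$, where neither asymptotic regime applies cleanly, and the careful tracking of constants so that the choice of $\delta$ made to beat the small-scale drift does not preclude the subsequent choice of $\gamma$ in the large-scale analysis. As the lemma statement notes, this calculation was performed by Kiselev, Nazarov, and Volberg in \cite{Kiselev} for the critical SQG equation; since \eqref{e:Kiselevcalc} has the same structural form as theirs with $A$ in place of their drift constant and $c\lambda$ in place of their diffusion constant, the same parameter choices carry over.
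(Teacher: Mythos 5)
The paper does not actually prove this lemma: it is imported verbatim from \cite{Kiselev} (the Kiselev--Nazarov--Volberg construction for critical SQG), and the paper only proves the strengthened version, Lemma \ref{l:omegacalc}, by taking \eqref{e:Kiselevcalc} as a black box and handling the one additional term. So your closing move --- deferring to \cite{Kiselev} after checking that the structure matches with $A$ and $c\lambda$ in place of their constants --- is exactly what the paper does, and your two-regime skeleton ($|\xi|\le\delta$ versus $|\xi|>\delta$) is the same one the paper uses in its proof of Lemma \ref{l:omegacalc}. Your Case 1 is essentially right: the dissipation is $\lesssim |\xi|\,\omega''(|\xi|)\sim -|\xi|^{1/2}$, and the drift is $O(|\xi|(1+\log(\delta/|\xi|)))$ (note the logarithm from $|\xi|\int_{|\xi|}^{\infty}\omega(r)r^{-2}\,dr$, which you should keep, though it is harmless since $|\xi|^{1/2}\log(\delta/|\xi|)\lesssim\delta^{1/2}$ on $(0,\delta]$). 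The interaction term $A\omega(|\xi|)\int_{|\xi|}^\infty(\omega(|\xi|+r)-\omega(|\xi|))r^{-2}dr$ you carry along is not part of \eqref{e:Kiselevcalc}; including it only proves more, so it is harmless.

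The genuine problem is your Case 2, where the stated scalings do not close. You claim the far-field dissipation is ``comparable to $-\omega'(|\xi|)$ up to logarithmic factors'' and the drift ``scales as $\omega'(|\xi|)\cdot\gamma\log(|\xi|/\delta)$.'' Taken literally, the drift is then $\approx\gamma^2/|\xi|$ while the dissipation is $\approx-\gamma/(|\xi|\log(|\xi|/\delta))$, and for any fixed $\gamma>0$ the drift wins once $\log(|\xi|/\delta)>1/\gamma$; no choice of small $\gamma$ rescues a comparison of that form uniformly in $|\xi|$. The correct mechanism is that the dissipation is far stronger than $-\omega'$: concavity gives $\omega(r+|\xi|)-\omega(r-|\xi|)\le\omega(2|\xi|)\le\tfrac32\omega(|\xi|)$ for small $\gamma$, whence
\begin{equation*}
\int\limits_{|\xi|}^{\infty}\frac{\omega(r+|\xi|)-\omega(r-|\xi|)-2\omega(|\xi|)}{r^2}\,dr\le-\frac{1}{2}\frac{\omega(|\xi|)}{|\xi|},
\end{equation*}
where $\omega(|\xi|)\ge\omega(\delta)\gtrsim\delta$ is bounded below. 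Meanwhile both drift pieces are bounded by $CA\gamma\,\omega(|\xi|)/|\xi|$, because $\omega'(|\xi|)\log(|\xi|/\delta)\le\gamma/|\xi|$ and $|\xi|\int_{|\xi|}^\infty\omega(r)r^{-2}dr\le\omega(|\xi|)+\gamma$. Both sides therefore scale as $\omega(|\xi|)/|\xi|$, with the drift carrying an extra factor $\gamma$, and choosing $\gamma<c\lambda/(4CA)$ closes the argument. This is the heart of why $\omega$ is designed to grow (doubly logarithmically) rather than flatten out, and your sketch as written misses it.
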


Following the proof of Lemma \ref{l:Kiselev}, we can similarly show that the same modulus of continuity $\omega$ satisfies the intero-differential inequality we need.  
\begin{lemma}\label{l:omegacalc}
Let $\omega$ be as in \eqref{e:omegadefn}.  Then this modulus satisfies 
\begin{equation}
\begin{split}
A\omega'(|\xi|)\left(\int\limits_0^{|\xi|} \frac{\omega(r)}{r}dr + |\xi| \int\limits_{|\xi|}^\infty \frac{\omega(r)}{r^2}dr\right) + A\omega(|\xi|)\int\limits_{|\xi|}^\infty \frac{\omega(|\xi|+r)-\omega(|\xi|)}{r^2}dr 
\\+c\lambda \int\limits_0^{|\xi|} \frac{\delta_r \omega(|\xi|)+\delta_{-r} \omega(|\xi|)}{r^2}dr + c\lambda\int\limits_{|\xi|}^\infty \frac{\omega(r+|\xi|)-\omega(r-|\xi|)-2\omega(|\xi|)}{r^2}dr<-\omega'(|\xi|)\omega(|\xi|),
\end{split}
\end{equation}
so long as $\delta$ is taken sufficiently small depending on $\displaystyle\frac{A+1}{c\lambda}$, and $\gamma $ is sufficiently small depending on $\delta, \displaystyle\frac{A+1}{c\lambda}$.
\end{lemma}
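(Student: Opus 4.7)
The plan is to follow the proof of Lemma \ref{l:Kiselev} from \cite{Kiselev}, which establishes the companion inequality \eqref{e:Kiselevcalc}, and verify that its strict margin is large enough to absorb both the new positive term $A\omega(|\xi|)\int_{|\xi|}^\infty[\omega(|\xi|+r)-\omega(|\xi|)]/r^2\,dr$ and to upgrade the right-hand side from $0$ to $-\omega'(|\xi|)\omega(|\xi|)$. Note that Lemma \ref{l:Kiselev} only uses half of the diffusive gain ($c\lambda/2$ of the full $c\lambda$), so there is already a reserve equal to $\tfrac{c\lambda}{2}\times$(diffusion) that we can exploit.

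The crucial technical step is to estimate the new integral sharply. The crude subadditivity bound $\omega(|\xi|+r)-\omega(|\xi|) \leq \omega(r)$ would give an upper bound of order $A\omega(|\xi|)^2/|\xi|$, which at large scales exceeds the diffusive reserve by a logarithmic factor and cannot be absorbed. Instead, I use the concavity-based pointwise estimate $\omega(|\xi|+r)-\omega(|\xi|) \leq \min\{\omega'(|\xi|)r,\,\omega(r)\}$, and split the integration in $r$ at the crossover point $r_\ast$ where the two bounds agree. For our modulus this gives $r_\ast \asymp |\xi| L(|\xi|) \log L(|\xi|)$ with $L(|\xi|) := 4+\log(|\xi|/\delta)$, and a direct computation then yields
\begin{equation*}
\int_{|\xi|}^\infty \frac{\omega(|\xi|+r)-\omega(|\xi|)}{r^2}\,dr \;\lesssim\; \omega'(|\xi|)\bigl(1+\log L(|\xi|)\bigr), \qquad |\xi| \geq \delta,
\end{equation*}
so that the new contribution is bounded by $A\omega(|\xi|)\omega'(|\xi|)(1+\log L(|\xi|))$, i.e.\ on the same footing as the existing Kiselev drift terms up to a bounded factor.

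With this estimate in hand, I verify the inequality in two regimes, exactly as in \cite{Kiselev}. For $|\xi| \leq \delta$, the $\xi-\xi^{3/2}$ form of $\omega$ makes the bulk diffusion $c\lambda\int_0^{|\xi|}(\delta_r\omega+\delta_{-r}\omega)/r^2\,dr$ contribute a strong negative gain of order $-c\lambda|\xi|^{1/2}$, which dominates every positive contribution --- including the new term (crudely $O(A|\xi|\log(\delta/|\xi|))$) and the target $\omega'(|\xi|)\omega(|\xi|)\approx|\xi|$ --- once $\delta$ is chosen small enough in terms of $(A+1)/(c\lambda)$. For $|\xi| \geq \delta$, the tail diffusion contributes a negative gain of order $-c\lambda\omega(|\xi|)/|\xi|$, while the new term and the target can be written as $c\lambda\omega(|\xi|)/|\xi|$ times factors of the form $A\gamma(\log L)^k/L$ with $k\in\{1,2\}$. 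Since $(\log L)^k/L$ is uniformly bounded on $L\geq 4$, choosing $\gamma$ small enough in terms of $(A+1)/(c\lambda)$ makes the total positive contribution strictly smaller than the reserve $\tfrac{c\lambda}{2}\cdot\omega(|\xi|)/|\xi|$, closing the inequality.

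The main obstacle is the large-scale regime, where naive estimates make the new positive term appear a logarithmic factor too large to be absorbed by the diffusive reserve. The refined concavity-based bound on the new integral is what resolves this: it converts a potential logarithmic blow-up into a bounded ratio $(\log L)^k/L \leq 4/e^2$, after which the rest proceeds by tracking constants through the proof of Lemma \ref{l:Kiselev} with slightly smaller choices of $\delta$ and $\gamma$ than in the companion lemma.
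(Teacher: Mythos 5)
Your proposal is correct, and its overall architecture coincides with the paper's: both reduce the claim to Lemma \ref{l:Kiselev} plus the reserved half $\tfrac{c\lambda}{2}$ of the diffusion, and both split at $|\xi|=\delta$, using the local diffusion ($\approx -c\lambda|\xi|^{1/2}$) below $\delta$ and the tail diffusion ($\approx -c\lambda\,\omega(|\xi|)/(2|\xi|)$) above. Two points of divergence are worth recording. First, the paper absorbs the target $-\omega'(|\xi|)\omega(|\xi|)$ into the drift term of Lemma \ref{l:Kiselev} via the concavity inequality $\omega(|\xi|)\le\int_0^{|\xi|}\omega(r)r^{-1}\,dr$ (hence the replacement $A\mapsto A+1$), whereas you charge it to the diffusive reserve; both work and give the same dependence of $\delta,\gamma$ on $(A+1)/(c\lambda)$. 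Second, and more substantively, the key integral $\int_{|\xi|}^\infty\bigl[\omega(|\xi|+r)-\omega(|\xi|)\bigr]r^{-2}\,dr$ for $|\xi|\ge\delta$: you correctly diagnose that the naive subadditive bound $\omega(|\xi|+r)-\omega(|\xi|)\le\omega(r)$ produces an unabsorbable factor of $\omega(|\xi|)$, and you repair this with the decomposition $\min\{\omega'(|\xi|)r,\ \omega(r)\}$ split at a crossover $r_*$. The paper instead writes $\omega(|\xi|+r)-\omega(|\xi|)=[\omega(|\xi|+r)-\omega(2|\xi|)]+[\omega(2|\xi|)-\omega(|\xi|)]$ and uses $\omega(2|\xi|)-\omega(|\xi|)\le\omega(\delta)/2$ together with $\omega'(s)\le\gamma/(4s)$, arriving at the cleaner bound $(\omega(\delta)/2+\gamma)/|\xi|\le\lambda/(4A|\xi|)$. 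Your route is slightly more delicate: since $\omega(|\xi|)\approx\omega(\delta)+\gamma\log(L/4)$ is dominated by $\omega(\delta)$ unless $|\xi|$ is astronomically large, the crossover is really $r_*\approx|\xi|L\,\omega(|\xi|)/\gamma$, so your estimate picks up an additional $\log(\delta/\gamma)$ alongside $\log L(|\xi|)$. This extra factor is still annihilated by the prefactor $\gamma$ when you choose $\gamma$ small depending on $\delta$, so the absorption into the reserve goes through and the conclusion is unaffected; it is just a constant you should track if you write the argument out in full.
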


\begin{proof}
To begin, note that $\omega$ is concave so long as $\gamma $ is taken sufficiently small depending on $\delta$. Hence, $$\omega(|\xi|)\leq \displaystyle\int\limits_{0}^{|\xi|} \frac{\omega(r)}{r}dr.$$   
Abusing notation and replacing $A+1$ by $A$ and $\displaystyle\frac{c\lambda}{2}$ by $\lambda$, in light of Lemma \ref{l:Kiselev} it suffices to prove that 
\begin{equation}\label{e:inequalitytoprove}
A\omega(|\xi|)\int\limits_{|\xi|}^\infty \frac{\omega(|\xi|+r)-\omega(|\xi|)}{r^2}dr 
+\lambda \int\limits_0^{|\xi|} \frac{\delta_r \omega(|\xi|)+\delta_{-r} \omega(|\xi|)}{r^2}dr + \lambda\int\limits_{|\xi|}^\infty \frac{\omega(r+|\xi|)-\omega(r-|\xi|)-2\omega(|\xi|)}{r^2}dr <0.
\end{equation}

Note that as $\omega$ is concave, the latter two integrals of \eqref{e:inequalitytoprove} are necessarily nonpostiive.  Depending on the size of $|\xi|$, we shall rely on one or the other to control the error term.  
That leaves us with two cases to check.  

{\bf Case 1: } $|\xi| \leq \delta$

We have that in this case, 
\begin{equation}
\int\limits_0^{|\xi|} \frac{\delta_r \omega(|\xi|)+\delta_{-r} \omega(|\xi|)}{r^2}dr  \leq |\xi|\omega''(|\xi|) = -\frac{3}{2}\xi\xi^{-1/2}.
\end{equation}

We also have the bounds
\begin{equation}
\begin{split}
\int\limits_{|\xi|}^\delta \frac{\omega(|\xi|+r)-\omega(|\xi|)}{r^2}dr &\leq \int\limits_{|\xi|}^\delta \frac{\omega(r)}{r^2}dr\leq  \int\limits_{|\xi|}^\delta \frac{1}{r}dr = \log(\delta/|\xi|),
\\ \int\limits_{\delta}^\infty  \frac{\omega(|\xi|+r)-\omega(|\xi|)}{r^2}dr &\leq \int\limits_{\delta}^\infty \frac{\omega(r)}{r^2}dr\leq \frac{\omega(\delta)}{\delta}+\gamma\int\limits_{\delta}^\infty \frac{1}{r^2 (4+\log(r/\delta))}dr \leq 1+\frac{\gamma}{4\delta}<2,
\end{split}
\end{equation}
where the last inequality follows by taking $\gamma<4\delta$.  

Putting this together, we thus have that 

\begin{equation}
\begin{split}
A\omega(|\xi|)\int\limits_{|\xi|}^\infty \frac{\omega(|\xi|+r)-\omega(|\xi|)}{r^2}dr +\lambda \int\limits_0^{|\xi|} \frac{\delta_r \omega(|\xi|)+\delta_{-r} \omega(|\xi|)}{r^2}dr &\leq |\xi| (A(2+\log(\delta/|\xi|))-\frac{3}{2}\lambda |\xi|^{-1/2}) 
\\&\leq -|\xi| <0
\end{split}
\end{equation}
so long as $\delta$ is taken sufficiently small.

{\bf Case 2: } $|\xi| \geq \delta$

To begin, note that $\omega(r+|\xi|)-\omega(r-|\xi|)\leq \omega(2|\xi|)$ since $\omega$ is concave.  Then we can also bound
\begin{equation}
\omega(2|\xi|)\leq \omega(|\xi|)+\int\limits_{|\xi|}^{2|\xi|}\displaystyle\frac{\gamma}{\xi(4+\log(\xi/\delta))} \leq \omega(|\xi|)+\frac{\log(2)\gamma}{4} \leq \omega(|\xi|)+\frac{\omega(\delta)}{2}\leq \frac{3}{2}\omega(|\xi|),
\end{equation}
so long as $\gamma$ is taken sufficiently small depending on $\delta$.  Hence, 
\begin{equation}
\int\limits_{|\xi|}^\infty \frac{\omega(r+|\xi|)-\omega(r-|\xi|)-2\omega(|\xi|)}{r^2}dr \leq\int\limits_{|\xi|}^\infty \frac{-\omega(|\xi|)}{2r^2}dr=\frac{-1}{2}\frac{\omega(|\xi|)}{|\xi|}
\end{equation}

Using the same argument, we can also bound
\begin{equation}
\int\limits_{|\xi|}^\infty \frac{\omega(|\xi|+r)-\omega(|\xi|)}{r^2}dr \leq \frac{\omega(2|\xi|)-\omega(|\xi|)}{|\xi|} + \int\limits_{|\xi|}^\infty \frac{\gamma}{r^2}dr \leq  \frac{\omega(\delta)}{2|\xi|} +\frac{\gamma}{|\xi|}\leq \frac{\lambda}{4A|\xi|}
\end{equation}
so long as $\delta, \gamma$ are taken sufficiently small.  Hence, 
\begin{equation}
\begin{split}
A\omega(|\xi|)\int\limits_{|\xi|}^\infty \frac{\omega(|\xi|+r)-\omega(|\xi|)}{r^2}dr +\lambda \int\limits_{|\xi|}^\infty \frac{\omega(r+|\xi|)-\omega(r-|\xi|)-2\omega(|\xi|)}{r^2}dr &\leq-\frac{\lambda}{4}\frac{\omega(|\xi|)}{|\xi|} <0.
\end{split}
\end{equation}

\end{proof}

%%%%%%%%%%%%%%%%%%%%%%%%%%%%%%%%%%%%%%%%%%%%%%%%%%%%%%%%%%%%%%%%%%%%%%%

\section{Our choice for the modulus $\overline{\omega}$}\label{s:modulusrho}
We've now shown that for the modulus defined in \eqref{e:omegadefn} that if the assumptions \eqref{e:omegabound} hold that
\begin{equation}\label{e:omegafinalequation}
\frac{d}{dt}\left(f_e(t,\xi/2)-f_e(t,-\xi/2)\right)\bigg|_{t=T} < -\omega'(|\xi|)\omega(|\xi|).
\end{equation}
We claim that in fact \eqref{e:omegafinalequation} will hold for any rescaling $\omega_R(|\xi|) = \omega(R|\xi|)$ as well.  

\begin{lemma}\label{l:rescaling}
Let $R>0$, and $\omega_R(|\xi|)= \omega(R|\xi|)$, where $\omega$ is such that Lemma \ref{l:omegacalc} holds.  Then Lemma \ref{l:omegacalc} holds for $\omega_R$ as well.  That is, for any $|\xi|>0$,
\begin{equation}\label{e:rescaledomegainequal}
\begin{split}
A\omega_R'(|\xi|)\left(\int\limits_0^{|\xi|} \frac{\omega_R(r)}{r}dr + |\xi| \int\limits_{|\xi|}^\infty \frac{\omega_R(r)}{r^2}dr\right) + A\omega_R(|\xi|)\int\limits_{|\xi|}^\infty \frac{\omega_R(|\xi|+r)-\omega_R(|\xi|)}{r^2}dr 
\\+c_d\lambda \int\limits_0^{|\xi|} \frac{\delta_r \omega_R(|\xi|)+\delta_{-r} \omega_R(|\xi|)}{r^2}dr + c_d\lambda\int\limits_{|\xi|}^\infty \frac{\omega_R(r+|\xi|)-\omega_R(r-|\xi|)-2\omega_R(|\xi|)}{r^2}dr<-\omega_R'(|\xi|)\omega_R(|\xi|).
\end{split}
\end{equation}

\end{lemma}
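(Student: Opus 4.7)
The plan is to verify that both sides of the inequality in Lemma \ref{l:omegacalc} are homogeneous of the same degree under the rescaling $\omega \mapsto \omega_R$, so that the inequality for $\omega_R$ at $|\xi|$ reduces exactly to the inequality for $\omega$ at the shifted point $\eta := R|\xi|$, which we already know holds by Lemma \ref{l:omegacalc}. In other words, the proof should be a bookkeeping exercise in changes of variables; no new analytic content is needed.

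First, I would rewrite each of the four integrals on the left-hand side of \eqref{e:rescaledomegainequal} using the substitution $s = Rr$ (and noting $\omega_R(r) = \omega(Rr)$, $\omega_R'(|\xi|) = R\,\omega'(\eta)$). For example,
\begin{equation*}
\int_0^{|\xi|} \frac{\omega_R(r)}{r}\,dr = \int_0^{\eta} \frac{\omega(s)}{s}\,ds,
\qquad
|\xi|\int_{|\xi|}^{\infty} \frac{\omega_R(r)}{r^2}\,dr = \eta \int_{\eta}^{\infty} \frac{\omega(s)}{s^2}\,ds,
\end{equation*}
and similarly the second-difference integrals transform to
\begin{equation*}
\int_0^{|\xi|} \frac{\delta_r\omega_R(|\xi|)+\delta_{-r}\omega_R(|\xi|)}{r^2}\,dr = R\int_0^{\eta} \frac{\delta_s\omega(\eta)+\delta_{-s}\omega(\eta)}{s^2}\,ds,
\end{equation*}
with the analogous identity for the tail integral on $(|\xi|,\infty)$, and
\begin{equation*}
\omega_R(|\xi|)\int_{|\xi|}^{\infty}\frac{\omega_R(|\xi|+r)-\omega_R(|\xi|)}{r^2}\,dr = R\,\omega(\eta)\int_{\eta}^{\infty}\frac{\omega(\eta+s)-\omega(\eta)}{s^2}\,ds.
\end{equation*}

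Next, I would tally the overall power of $R$ in each term. In the first term, the factor $\omega_R'(|\xi|) = R\,\omega'(\eta)$ contributes $R$ while the bracketed integrand is scale-invariant, giving net $R$. In the remaining three terms, the inner substitution itself contributes the single factor of $R$ displayed above, again giving net $R$. On the right-hand side, $-\omega_R'(|\xi|)\omega_R(|\xi|) = -R\,\omega'(\eta)\omega(\eta)$ also carries exactly one factor of $R$. Dividing both sides of the rescaled inequality by $R > 0$ therefore produces precisely the inequality of Lemma \ref{l:omegacalc} at the point $\eta$, which holds by hypothesis. Since $R > 0$ was arbitrary and $\eta = R|\xi|$ ranges over all of $(0,\infty)$ as $|\xi|$ does, this completes the proof.

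The main (and essentially only) thing to be careful about is verifying that each of the four structurally different integrals really does carry the same single power of $R$ after the change of variables; the nonlinear-looking second term $A\omega_R(|\xi|)\int\!\frac{\omega_R(|\xi|+r)-\omega_R(|\xi|)}{r^2}\,dr$ is the most error-prone because one must track the outer factor $\omega_R(|\xi|)=\omega(\eta)$ (scale $R^0$) together with the Jacobian factor from the substitution. Once this is checked, no smallness assumptions on $\delta$ or $\gamma$ beyond those already imposed in Lemma \ref{l:omegacalc} are needed, since we are only transplanting a known inequality to a dilated argument.
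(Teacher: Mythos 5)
Your proposal is correct and is essentially the paper's own argument: the paper packages the left-hand side minus the right-hand side into a functional $F[g]$ and states that a direct calculation gives $F[g_R](|\xi|)=RF[g](R|\xi|)$, which is exactly the degree-one homogeneity you verify term by term via the substitution $s=Rr$. Your version just makes the "direct calculation" explicit; no further comment is needed.
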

\begin{proof}
Given $g\in \dot{W}^{1,\infty}([0,\infty); [0,\infty))$, we define the function $F[g]: (0,\infty)\to \R$ by 
\begin{equation}
\begin{split}
F[g](|\xi|)= 
Ag'(|\xi|)\left(\int\limits_0^{|\xi|} \frac{g(r)}{r}dr + |\xi| \int\limits_{|\xi|}^\infty \frac{g(r)}{r^2}dr\right) + Ag(|\xi|)\int\limits_{|\xi|}^\infty \frac{g(|\xi|+r)-g(|\xi|)}{r^2}dr 
\\+c_d\lambda \int\limits_0^{|\xi|} \frac{\delta_r g(|\xi|)+\delta_{-r} g(|\xi|)}{r^2}dr + c_d\lambda\int\limits_{|\xi|}^\infty \frac{g(r+|\xi|)-g(r-|\xi|)-2g(|\xi|)}{r^2}dr+g'(|\xi|)g(|\xi|).
\end{split}
\end{equation}
Letting $g_R(|\xi|) = g(R|\xi|)$, direct calculation gives that 
\begin{equation}
F[g_R](|\xi|) = RF[g](R|\xi|).  
\end{equation}

Hence, 
\begin{equation}
F[\omega](|\xi|)<0 \qquad \forall |\xi|>0, \qquad \iff \qquad F[\omega_R](|\xi|)<0, \qquad \forall |\xi|>0.
\end{equation}

\end{proof}

\begin{remark}
We note that Lemma \ref{l:rescaling} is a natural consequence of the fact that solutions to the Muskat equation \eqref{e:fequation} are preserved under the geometric rescaling $R^{-1}f(Rt,Rx)$ and the fact that the constants $A, \lambda$ appearing in \eqref{e:rescaledomegainequal} depend only on the scale invariant quantity $||\nabla f_0||_{L^\infty}$ and dimension.  
\end{remark}

\begin{lemma}\label{l:generationrho}
Let $f_0 \in \dot{W}^{1,\infty}(\R^{2})$ with $||\nabla f_0||_{L^\infty} < \displaystyle\frac{1}{\sqrt{5}}$, and $f:[0,T]\times \R^{2}\to \R$ be a sufficiently smooth solution to the Muskat equation satisfying the assumptions of the breakthrough argument.  Letting $\omega$ be as defined in \eqref{e:omegadefn} satisfying Lemma \ref{l:omegacalc}, and taking 
\begin{equation}\label{e:Cequation}
\begin{split}
C =  \sup\limits_{0<r<\omega^{-1}(2||\nabla f_0||_{L^\infty})} \frac{r}{\omega(r)} = \frac{\omega^{-1}(2||\nabla f_0||_{L^\infty})}{2||\nabla f_0||_{L^\infty}}, 
\\ \overline{\omega}(r) = \omega(Cr).
\end{split}
\end{equation}
If $\nabla_x f_0$ has modulus $\overline{\omega}\left(\frac{\cdot}{\delta}\right)$, then $\nabla_x f(t,\cdot)$ has modulus $\overline{\omega}\left(\frac{\cdot}{t+\delta}\right)$.  In particular taking $\delta=0$,  we for all such solutions $f$ that  
\begin{equation}
|\nabla_x f(t,x)-\nabla_x f(t,y)|< \overline{\omega}\left(\frac{|x-y|}{t}\right).
\end{equation}
\end{lemma}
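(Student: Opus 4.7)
The plan is to run the breakthrough argument of Section \ref{s:breakthrough} with the time-dependent modulus $m_\tau(r)=\overline{\omega}(r/\tau)$ and $\tau=t+\delta$, and derive a contradiction by combining Lemma \ref{l:timederivbound}, the rescaled modulus inequality of Lemmas \ref{l:omegacalc} and \ref{l:rescaling}, and the maximum principle of Proposition \ref{p:maximumprinciple}. Starting from the hypothesis that $\nabla_x f_0$ has modulus $\overline{\omega}(\cdot/\delta)$, the claim holds strictly for small $t$; suppose by contradiction that $t_0\ge 0$ is the first touching time, so after translating there exist $\xi\ne 0$ and $e\in S^1$ with
\begin{equation*}
f_e(t_0,\xi/2)-f_e(t_0,-\xi/2) = \overline{\omega}(|\xi|/\tau_0), \qquad \tau_0:=t_0+\delta,
\end{equation*}
and $|\delta_h\nabla_x f(t_0,x)|\le \overline{\omega}(|h|/\tau_0)$ for every $x,h$. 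It then suffices to show
\begin{equation*}
\frac{d}{dt}\bigl(f_e(t,\xi/2)-f_e(t,-\xi/2)\bigr)\bigg|_{t=t_0} < \frac{d}{dt}\overline{\omega}\!\left(\frac{|\xi|}{t+\delta}\right)\bigg|_{t=t_0},
\end{equation*}
contradicting the maximality of $t_0$.

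At the crossing time the effective modulus is $\omega_R(r):=\omega(Rr)$ with $R=C/\tau_0$, so $f$ satisfies the crossing point hypothesis \eqref{e:omegabound} for $\omega_R$. By Lemma \ref{l:rescaling} the conclusion of Lemma \ref{l:omegacalc} transfers from $\omega$ to $\omega_R$, yielding $F[\omega_R](|\xi|)<0$. Plugging this into Lemma \ref{l:timederivbound} gives
\begin{equation*}
\frac{d}{dt}\bigl(f_e(t,\xi/2)-f_e(t,-\xi/2)\bigr)\bigg|_{t=t_0} < -\omega_R'(|\xi|)\omega_R(|\xi|) = -\frac{C}{\tau_0}\,\omega'\!\left(\tfrac{C|\xi|}{\tau_0}\right)\omega\!\left(\tfrac{C|\xi|}{\tau_0}\right),
\end{equation*}
while a direct differentiation yields
\begin{equation*}
\frac{d}{dt}\overline{\omega}\!\left(\frac{|\xi|}{t+\delta}\right)\bigg|_{t=t_0}= -C\,\omega'\!\left(\tfrac{C|\xi|}{\tau_0}\right)\frac{|\xi|}{\tau_0^{\,2}}.
\end{equation*}
Since $\omega'>0$, the required inequality therefore reduces to $\omega(C|\xi|/\tau_0)\ge |\xi|/\tau_0$.

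This last bound is exactly what the definition of $C$ is engineered to give, and is where the max principle enters. By Proposition \ref{p:maximumprinciple},
\begin{equation*}
\overline{\omega}(|\xi|/\tau_0)=f_e(t_0,\xi/2)-f_e(t_0,-\xi/2)\le 2\|\nabla_x f_0\|_{L^\infty},
\end{equation*}
and since $\overline{\omega}(2\|\nabla_x f_0\|_{L^\infty})=\omega(\omega^{-1}(2\|\nabla_x f_0\|_{L^\infty}))=2\|\nabla_x f_0\|_{L^\infty}$, monotonicity of $\overline{\omega}$ forces $|\xi|/\tau_0\le 2\|\nabla_x f_0\|_{L^\infty}$, i.e. $C|\xi|/\tau_0\le \omega^{-1}(2\|\nabla_x f_0\|_{L^\infty})$. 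By \eqref{e:Cequation}, $r/\omega(r)\le C$ on this range, and setting $r=C|\xi|/\tau_0$ gives $\omega(C|\xi|/\tau_0)\ge |\xi|/\tau_0$, completing the strict comparison and the contradiction. Propagation of the modulus $\overline{\omega}(\cdot/(t+\delta))$ is thereby proved.

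For the generation statement (``$\delta=0$''), apply the propagation with $\delta>0$ small. Since $\overline{\omega}(r)\ge r$ on $[0,2\|\nabla_x f_0\|_{L^\infty}]$ and $\overline{\omega}(r)\ge 2\|\nabla_x f_0\|_{L^\infty}$ beyond that range, the smooth bound $|\nabla_x f_0(x)-\nabla_x f_0(y)|\le \min(\|\nabla^2_x f_0\|_{L^\infty}|x-y|,\,2\|\nabla_x f_0\|_{L^\infty})$ is dominated by $\overline{\omega}(|x-y|/\delta)$ once $\delta\le 1/\|\nabla^2_x f_0\|_{L^\infty}$. Hence the propagation applies for all such $\delta$, and letting $\delta\to 0^+$ pointwise in $\overline{\omega}(|x-y|/(t+\delta))$ gives the generated modulus $\overline{\omega}(|x-y|/t)$ for every $t>0$. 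The main obstacle is that the argument only closes when the rescaling in Lemma \ref{l:rescaling}, the max-principle-based bound $|\xi|/\tau_0\le 2\|\nabla_x f_0\|_{L^\infty}$, and the choice of $C$ all align to produce $\omega(Cs)\ge s$ on exactly the range attained by $|\xi|/\tau_0$; the delicate point is that this range is dictated by the maximum principle and closes precisely because $C$ is defined as the supremum in \eqref{e:Cequation}.
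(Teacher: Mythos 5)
Your proposal is correct and follows essentially the same route as the paper: the breakthrough/first-touching-time argument, Lemma \ref{l:timederivbound} combined with Lemmas \ref{l:omegacalc} and \ref{l:rescaling} applied to $\omega_R$ with $R=C/\tau_0$, and the maximum principle plus the definition of $C$ to convert $-\omega_R'\omega_R$ into the needed bound $-|\xi|\,\overline{\omega}'(|\xi|/\tau_0)/\tau_0^2$. The only (harmless) difference is that you run the argument for general $\delta>0$ and recover $\delta=0$ by shrinking $\delta$, whereas the paper treats $\delta=0$ directly and defers the $\delta>0$ case to "a simple modification."
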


\begin{proof}
We focus on the case that $\delta = 0$.  The case where $\delta>0$ follows by a simple modification of the breakthrough argument given in Section \ref{s:breakthrough}.  

The constant $C$ defined in \eqref{e:Cequation} was chosen so that we have the inequality
\begin{equation}
\overline{\omega}(r)\geq r, \qquad \forall r \ \mbox{s.t.} \ 0\leq \overline{\omega}(r)\leq 2||\nabla_x f_0||_{L^\infty}.  
\end{equation}

Recall the breakthrough argument of section \ref{s:breakthrough}.  We have that if $\nabla_x f$ does not have the modulus $\rho\left(\displaystyle\frac{\cdot}{t}\right)$ for all times $t$, then necessarily we can find a positive time $t_0$ and points $x,y\in \R^{2}$ and direction $e\in S^{1}$ such that 
\begin{equation}
\rho\left(\frac{|x-y|}{t_0}\right) = f_e(t_0,x)-f_e(t_0,y), \quad \Rightarrow -\displaystyle\frac{|x-y|}{t_0^2}\overline{\omega}'\left(\frac{|x-y|}{t_0}\right) \leq \frac{d}{dt}\left(f_e(t,x)-f_e(t,y)\right)\bigg|_{t=t_0}.
\end{equation}

By the maximum principle for the slope (see Proposition \ref{p:maximumprinciple}), we thus have that at a point of equality
\begin{equation}
\begin{split}
\overline{\omega}\left(\frac{|x-y|}{t_0}\right) = f_e(t_0,x)-f_e(t_0,y)\leq 2||\nabla f_0||_{L^\infty},
\\ \Rightarrow \quad \overline{\omega}\left(\frac{|x-y|}{t_0}\right) \geq \frac{|x-y|}{t_0}.
\end{split}
\end{equation}
Hence, applying Lemmas \ref{l:timederivbound} and \ref{l:rescaling}, we have that 
\begin{equation}
\frac{d}{dt}\left(f_e(t,x)-f_e(t,y)\right)\bigg|_{t=t_0} < - \frac{1}{t_0}\overline{\omega}'\left(\frac{|x-y|}{t_0}\right) \overline{\omega}\left(\frac{|x-y|}{t_0}\right) \leq \frac{-|x-y|}{t_0^2}\overline{\omega}'\left(\frac{|x-y|}{t_0}\right),
\end{equation}
a contradiction.  Thus $\nabla_x f(t,\cdot)$ must have the modulus $\overline{\omega}\left(\frac{\cdot}{t}\right) $ for all times $t$.  

\end{proof}

\begin{corollary}\label{c:C11bound}
Let $f:[0,T]\times \R^2\to \R$ be as in Lemma \ref{l:generationrho}.  Then 
\begin{equation}
||D^2_x f||_{L^\infty}(t) \lesssim \min\left\{||D^2_x f_0||_{L^\infty}, \frac{1}{t}\right\}.
\end{equation}
\end{corollary}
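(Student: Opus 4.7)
The plan is to read off both halves of the estimate directly from the modulus of continuity produced in Lemma \ref{l:generationrho}. The central observation is that since $\omega(\xi) = \xi - \xi^{3/2}$ near zero, the rescaled modulus $\overline{\omega}(r) = \omega(Cr)$ is differentiable at the origin with $\overline{\omega}'(0) = C\omega'(0) = C$, and $C$ depends only on $||\nabla_x f_0||_{L^\infty}$, so $C \lesssim 1$.

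For the $1/t$ half, apply Lemma \ref{l:generationrho} with $y = x + hv$ for $v \in S^1$, divide by $|h|$, and take $h \to 0^+$:
\begin{equation}
|D^2_x f(t,x) v| \leq \lim_{h \to 0^+} \frac{\overline{\omega}(h/t)}{h} = \frac{\overline{\omega}'(0)}{t} = \frac{C}{t} \lesssim \frac{1}{t},
\end{equation}
uniformly in $x$ and $v$.

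For the $M := ||D^2_x f_0||_{L^\infty}$ half, I would invoke the propagation statement (the $\delta > 0$ case) of Lemma \ref{l:generationrho} with the choice $\delta = 1/M$. The trivial initial modulus of the slope is
\begin{equation}
|\nabla_x f_0(x + re) - \nabla_x f_0(x)| \leq \min\bigl(Mr, \, 2||\nabla_x f_0||_{L^\infty}\bigr), \qquad r \geq 0, \ e \in S^1,
\end{equation}
so it suffices to check $\overline{\omega}(r/\delta) \geq \min(Mr, 2||\nabla_x f_0||_{L^\infty})$ for all $r \geq 0$. By the definition of $C$ in \eqref{e:Cequation} one has $\overline{\omega}(2||\nabla_x f_0||_{L^\infty}) = 2||\nabla_x f_0||_{L^\infty}$, and concavity of $\overline{\omega}$ together with $\overline{\omega}(0) = 0$ gives the chord inequality $\overline{\omega}(s) \geq s$ on the interval $[0, 2||\nabla_x f_0||_{L^\infty}]$, as already recorded in the proof of Lemma \ref{l:generationrho}. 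Splitting into the case $r/\delta \leq 2||\nabla_x f_0||_{L^\infty}$ (where $\overline{\omega}(r/\delta) \geq r/\delta = Mr$) and $r/\delta \geq 2||\nabla_x f_0||_{L^\infty}$ (where monotonicity of $\overline{\omega}$ gives $\overline{\omega}(r/\delta) \geq 2||\nabla_x f_0||_{L^\infty}$) finishes the verification. Lemma \ref{l:generationrho} then yields $|\nabla_x f(t,x) - \nabla_x f(t,y)| \leq \overline{\omega}(|x-y|/(t + 1/M))$, and letting $|x-y| \to 0$ as before produces
\begin{equation}
||D^2_x f(t,\cdot)||_{L^\infty} \leq \frac{C}{t + 1/M} \leq C \min\left\{M,\, \frac{1}{t}\right\}.
\end{equation}

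No step is a serious obstacle — everything is an algebraic consequence of the modulus $\overline{\omega}$. The only item requiring any care is the case analysis showing that $\overline{\omega}(\cdot/\delta)$ dominates the piecewise-linear trivial modulus of $\nabla_x f_0$ on all of $[0,\infty)$, and this is precisely what the normalization of $C$ in \eqref{e:Cequation} was designed to guarantee.
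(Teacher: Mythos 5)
Your proof is correct and follows the route the paper clearly intends (the corollary is stated without proof as an immediate consequence of Lemma \ref{l:generationrho}): the $1/t$ bound comes from letting $|x-y|\to 0$ in the generated modulus $\overline{\omega}(\cdot/t)$ using $\overline{\omega}'(0)=C\lesssim 1$, and the $\|D^2_x f_0\|_{L^\infty}$ bound comes from the $\delta>0$ propagation statement with $\delta = 1/\|D^2_x f_0\|_{L^\infty}$, after verifying via the normalization \eqref{e:Cequation} that $\overline{\omega}(\cdot/\delta)$ dominates the trivial modulus $\min(Mr,\,2\|\nabla_x f_0\|_{L^\infty})$ of $\nabla_x f_0$. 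The case analysis you give for that domination is exactly the content of the chord inequality $\overline{\omega}(r)\geq r$ on $\{\overline{\omega}\leq 2\|\nabla_x f_0\|_{L^\infty}\}$ already recorded in the proof of Lemma \ref{l:generationrho}, so nothing is missing.
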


%%%%%%%%%%%%%%%%%%%%%%%%%%%%%%%%%%%%%%%%%%%%%%%%%%%%%%%%%%%%%%%%%%%%%%%

\section{Comparison Principle and Uniqueness}\label{s:comparison}

Our goal in this section is to show that under suitable bounds on the slope, classical solutions to the Muskat equation obey the comparison principle.  In order to make our exact assumptions and claims clear, we start off with a definition of what precisely we mean by a classical solution.  As the comparison principle has not been proven before in any dimension,  we perform all calculations here for a general dimension $d$.  

\begin{definition}\label{d:classical}(Classical solution)
We say that a function $f: [0,T]\times \R^{d-1} \to \R$ is a classical solution to the $d$-dimensional Muskat equation \eqref{e:dequation} if 
\begin{enumerate}

\item $f\in C^1((0,T)\times \R^{d-1})$, and with uniform limits at initial and final times 
\begin{equation}
\lim\limits_{\epsilon\to 0+} ||f(\epsilon,\cdot)-f(0,\cdot)||_{L^\infty(\R^{d-1})}, ||f(T, \cdot)-f(T-\epsilon,\cdot)||_{L^\infty(\R^{d-1})} = 0.
\end{equation}
\item For any compact subset $\mathcal{K}\subseteq (0,T)\times \R^{d-1}$, there exist smooth, nonnegative functions 
$\omega, \Omega: [0,\infty)\to [0,\infty)$ such that 
\begin{equation}
\begin{split}
\sup\limits_{(t,x)\in\mathcal{K}} \sup\limits_{h\in B_r} |f(t, x+h)-f(t,x)-\nabla_x f(t,x)\cdot h |\leq \omega(r)r,
\\\sup\limits_{(t,x)\in\mathcal{K}}  \sup\limits_{h\in B_R} |f(t,x+h)-f(t,x)|\leq \Omega(R),
\\ \int\limits_{0}^1 \frac{\omega(r)}{r}dr + \int\limits_{1}^\infty \frac{\Omega(R)}{R^2}dR < \infty.
\end{split}
\end{equation}
\item For all $(t,x)\in (0,T)\times \R^{d-1}$, the function $f$ satisfies the integro-differential equality 
\begin{equation}\label{e:dequation}
\partial_t f(t,x)= P.V.\int\limits_{\R^{d-1}} \frac{f(t,y)-f(t,x) - \nabla_x f(t,x)\cdot (y-x)}{((f(t,y)-f(t,x))^2 + |y-x|^2)^{d/2}} dy.
\end{equation}
\end{enumerate}
A function $f$ is a classical sub or super solution if in assumption 3. we replace the equality by the inequalities $\leq, \geq$ respectively.
\end{definition}

We note that assumptions 1. and  2. guarantees that the integral in assumption 3. is well defined in the principle value sense for any $(t,x)\in (0,T)\times \R^{d-1}$.

\begin{lemma}\label{l:comparison}
Let $|a|\leq \displaystyle\frac{1}{\sqrt{2d-1}}$.  Then the function $t \to \displaystyle\frac{t- a}{(t^2+1)^{d/2}}$ is monotonic increasing for $|t|\leq \displaystyle\frac{1}{\sqrt{2d-1}}$.  
\end{lemma}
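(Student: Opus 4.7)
The plan is to directly compute the derivative and reduce monotonicity to a one‑variable inequality which we verify by a worst‑case estimate in $a$. By the quotient rule,
\begin{equation}
\frac{d}{dt}\left(\frac{t-a}{(t^2+1)^{d/2}}\right) = \frac{(t^2+1)^{d/2} - (t-a)\cdot d\,t\,(t^2+1)^{d/2-1}}{(t^2+1)^{d}} = \frac{1 + da\,t - (d-1)t^2}{(t^2+1)^{d/2+1}}.
\end{equation}
Since the denominator is positive, monotonicity of $t\mapsto (t-a)/(t^2+1)^{d/2}$ on $[-\tau,\tau]$, with $\tau := 1/\sqrt{2d-1}$, is equivalent to the pointwise bound
\begin{equation}
P(t,a) := 1 + da\,t - (d-1)t^2 \geq 0 \qquad \text{for all } |t|,|a|\leq \tau.
\end{equation}

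The worst case in $a$ occurs at $a = -\tau\,\mathrm{sgn}(t)$, for which $da\,t = -d\tau|t|$. Thus it suffices to show $\phi(s) \geq 0$ on $[0,\tau]$, where
\begin{equation}
\phi(s) := 1 - d\tau s - (d-1)s^2.
\end{equation}
This is immediate from three observations: $\phi(0)=1>0$; the derivative $\phi'(s) = -d\tau - 2(d-1)s$ is strictly negative on $[0,\tau]$, so $\phi$ is decreasing; and at the right endpoint the choice of $\tau$ gives exactly
\begin{equation}
\phi(\tau) = 1 - d\tau^2 - (d-1)\tau^2 = 1 - (2d-1)\tau^2 = 0.
\end{equation}
Hence $\phi(s)\geq 0$ on $[0,\tau]$ with equality only at $s=\tau$, which yields $P(t,a)\geq 0$ as required.

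There is no real obstacle here: the only slightly delicate point is recognizing that the constant $1/\sqrt{2d-1}$ is precisely tuned so that $\phi(\tau)=0$, which is why the monotonicity is sharp. Note that the derivative is in fact strictly positive on the open interval $|t|<\tau$ provided $|a|<\tau$, which will be needed whenever a strict comparison is invoked later. The lemma is stated with non‑strict inequalities so that it can also be applied at the boundary of the slope regime.
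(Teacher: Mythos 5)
Your proof is correct and follows essentially the same route as the paper: differentiate, reduce to the nonnegativity of the quadratic $1 + dat - (d-1)t^2$ on $|t|,|a|\le 1/\sqrt{2d-1}$, and verify it. The paper simply asserts the final inequality, whereas you supply the (easy but worthwhile) worst-case check showing the constant $1/\sqrt{2d-1}$ is exactly where the quadratic vanishes; nothing is missing.
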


\begin{proof}
Differentiating the function, we see that 
$$\displaystyle\frac{d}{dt}  \displaystyle\frac{t- a}{(t^2+1)^{d/2}} \geq 0 \quad \iff \quad (t^2+1) -dt(t-a) = (1-d)t^2 + adt + 1\geq 0.$$
By our assumption on $a$, we have that this is true whenever $|t|\leq \displaystyle\frac{1}{\sqrt{2d-1}}$.
\end{proof}

\begin{theorem}\label{t:comparison2}(Comparison Principle) 
Let $f,g: [0,T]\times \R^{d-1}\to \R$ be classical sub/super solutions of the $d$-dimensional Muskat equation \eqref{e:dequation} for $d=2$ or 3.  Assume that $|| \nabla f||_{L^\infty} , ||\nabla g ||_{L^\infty} < \displaystyle\frac{1}{\sqrt{2d-1}}$ and $f(0,x)\leq g(0,x)$.  Then $f(t,x)\leq g(t,x)$ for all $t\geq 0$.  
\end{theorem}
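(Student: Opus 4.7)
The plan is to reduce Theorem \ref{t:comparison2} to the pointwise monotonicity in Lemma \ref{l:comparison} by running a first-crossing argument at a touching point of the two solutions. The starting observation is that the integrand of \eqref{e:dequation} admits the rescaling
\begin{equation*}
\frac{f(x+v)-f(x)-\nabla f(x)\cdot v}{\bigl((f(x+v)-f(x))^2+|v|^2\bigr)^{d/2}} \;=\; \frac{1}{|v|^{d-1}}\,\Phi\!\left(\frac{f(x+v)-f(x)}{|v|},\ \frac{\nabla f(x)\cdot v}{|v|}\right),
\end{equation*}
where $\Phi(t,a)=(t-a)/(t^2+1)^{d/2}$. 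The slope assumption keeps both $|t|$ and $|a|$ strictly below $1/\sqrt{2d-1}$, so Lemma \ref{l:comparison} says $\Phi(\cdot,a)$ is strictly increasing in $t$ on the relevant interval for every admissible $a$. This monotonicity is the only structural input from the slope bound.

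I then run the standard first-crossing step. Assuming for contradiction that $f \not\le g$, I define $w_\epsilon(t,x):=f(t,x)-g(t,x)-\epsilon t$ for small $\epsilon>0$ chosen so that $\sup_x w_\epsilon$ first reaches zero at some time $t_0>0$ and, provisionally, is attained at some $x_0\in \R^{d-1}$. Then $\nabla f(t_0,x_0)=\nabla g(t_0,x_0)=:p$ and $f(t_0,y)-f(t_0,x_0)\le g(t_0,y)-g(t_0,x_0)$ for every $y$. Applying $\Phi(\cdot,\,p\cdot(y-x_0)/|y-x_0|)$ pointwise in $y$, multiplying by $|y-x_0|^{-(d-1)}$, and integrating, together with the sub/supersolution inequalities from Definition \ref{d:classical}, yields $\partial_t f(t_0,x_0)\le \partial_t g(t_0,x_0)$, and hence $\partial_t w_\epsilon(t_0,x_0) \le -\epsilon$. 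This contradicts the defining property of $t_0$, namely that $w_\epsilon(t,\cdot)\le 0$ for $t<t_0$ forces $\partial_t w_\epsilon(t_0,x_0)\ge 0$. The whole contradiction collapses once attainment is justified.

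The main obstacle is precisely this attainment: $\R^{d-1}$ is unbounded and $f,g$ need not decay, so $\sup_x w_\epsilon(t_0,\cdot)$ may only be approached along a sequence. The sublinear growth built into Definition \ref{d:classical} makes a penalty feasible: replace $w_\epsilon$ by $w_{\epsilon,\delta}(t,x):=w_\epsilon(t,x)-\delta\sqrt{1+|x|^2}$, whose spatial supremum is attained at some $x_\delta$ by sublinearity of $f-g$. At this penalized touching point, $\nabla f(t_0,x_\delta)-\nabla g(t_0,x_\delta)$ is $O(\delta)$ and the touching inequality becomes $f(t_0,\cdot)-f(t_0,x_\delta)\le g(t_0,\cdot)-g(t_0,x_\delta)+\delta|\cdot-x_\delta|$. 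The pointwise integrand discrepancy is then $O(\delta/|v|^{d-1})$, which is only borderline integrable, so the delicate step is to exploit odd-in-$v$ cancellation in the principal-value sense (together with local Lipschitz continuity of $\Phi$ in $a$ and of the difference quotient of $g$) to show that the total integrated error is $o(\epsilon)$ as $\delta\to 0$.

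An alternative that sidesteps this error analysis is a compactness argument: pick a maximizing sequence $x_n$ for $w_\epsilon(t_0,\cdot)$, translate $(f,g)$ by $x_n$, and use the uniform Lipschitz bound plus sublinear growth to extract a subsequential limit in $C_{loc}$ whose difference attains its supremum at the origin, at which point the clean first-crossing argument of the second paragraph applies verbatim. Either way, sending the regularization parameters to zero and then $\epsilon\to 0$ gives $f\le g$.
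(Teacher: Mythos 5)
Your core mechanism is exactly the paper's: perturb $g$ by $\mu t$ to make it a strict supersolution, locate a first crossing point, note that there $\nabla f=\nabla g$ and $\delta_v f\le \delta_v g$, and invoke Lemma \ref{l:comparison} (your $\Phi(t,a)$ is precisely the function in that lemma) to compare the two nonlocal integrals pointwise in $v$. That part is fine. The gap is in the step you yourself flag as ``delicate'': forcing the supremum of $f-g$ to be attained. Your linear penalty $\delta\sqrt{1+|x|^2}$ is the wrong barrier for this nonlocal operator. After penalization the touching inequality only gives $\delta_v f\le \delta_v g+\delta|v|$ and $\nabla f-\nabla g=O(\delta)$, and the resulting integrand discrepancy $O(\delta)/|v|^{d-1}$ is non-integrable at \emph{both} ends ($\int_{|v|<1}|v|^{1-d}\,dv$ and $\int_{|v|>1}|v|^{1-d}\,dv$ each diverge logarithmically or worse over $\R^{d-1}$). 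You propose to rescue this by odd-in-$v$ cancellation, but you never carry that out, and it is not clear it closes: the error enters through the asymmetric quantity $\delta_v\phi-\nabla\phi\cdot v$ and through the change in the denominator, not through an exactly odd term. Your alternative compactness route also does not go through under the stated hypotheses: Definition \ref{d:classical} only provides moduli $\omega,\Omega$ \emph{per compact set}, so translates $f(\cdot,\cdot+x_n)$ with $x_n\to\infty$ need not be precompact, and the limit need not be a sub/supersolution.

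The paper's resolution is worth internalizing because it is quantitative rather than a pure penalization-in-the-limit. It adds $\epsilon\phi$ to $g$ where $\phi$ is chosen with \emph{integrably sublinear} growth, i.e. $\sup_x\int_{|h|>1}|\delta_h\phi(x)|\,|h|^{-d}dh<\infty$ and $\nabla\phi\in W^{1,\infty}$; such a $\phi$ dominating $f-g$ at infinity exists precisely because Definition \ref{d:classical} imposes $\int_1^\infty\Omega(R)R^{-2}dR<\infty$ on the growth of classical solutions. With that choice the extra contribution of $\epsilon\phi$ to the nonlocal operator is bounded by $\epsilon\,C(\phi,g,\delta)$ --- a genuinely finite constant, no cancellation needed --- and is then absorbed by the strict margin $\mu$ coming from the $\mu t$ perturbation (see \eqref{e:gepsilonsuper1}--\eqref{e:gepsilonsuper3}). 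So the fix for your write-up is not more cancellation analysis but a different barrier: replace $\sqrt{1+|x|^2}$ by a sublinear $\phi$ adapted to $\Omega$, verify the finiteness of its tail integral, and let $\mu$ beat the $O(\epsilon)$ error.
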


\begin{proof}

By replacing $g(t,x)$ with $g(t,x) + \eta +\mu t$ for some $\eta,\mu>0$, we can assume without loss of generality that 

\begin{equation}
\begin{split}
f(0,x)+\eta \leq g(0,x), 
\\ \delta_t g(t,x) \geq \mu + \int\limits_{\R^{d-1}} \frac{\delta_h g(t,x) - \nabla g(t,x)\cdot h}{(\delta_h g(t,x)^2+|h|^2)^{d/2}}dh.
\end{split}
\end{equation}

Now let $\phi: \R^{d-1} \to [0,\infty)$ be such that $\nabla \phi\in W^{1,\infty}(\R^{d-1}; \R^{d-1})$,  with 
\begin{equation}
\sup\limits_{x} \int\limits_{|h|>1} \frac{|\delta_h \phi(x)|}{|h|^{d}}dh \leq C.
\end{equation}
and 
\begin{equation}\label{e:phifinequal}
\limsup\limits_{x\to \infty} \sup\limits_{t\in [0,T]}\frac{f(t,x)-g(t,x)}{ \phi(x)} \leq 0.
\end{equation}
Such a function $\phi$ necessarily exists by the definition of a classical sub/ super solution.  

Let $\epsilon>0$, and $g_\epsilon (t,x)= g(t,x)+\epsilon \phi(x)$.  It suffices to show that 
\begin{equation}
f(t,x)\leq g_\epsilon(t,x),\qquad (t,x)\in [0,T]\times \R^{d-1}.
\end{equation}

By \eqref{e:phifinequal} it follows that there is an $R_\epsilon > 0$ such that 
\begin{equation}
f(t,x)< g_\epsilon (t,x), \qquad t\in [0,T], |x|> R_\epsilon.
\end{equation}
Similarly as $f(0,x)+\eta \leq g_\epsilon (0,x)$, it follows by continuity of $f,g$ that there is a $\delta>0$ such that
\begin{equation}
f(t,x)<g_\epsilon (t,x), \qquad t<\delta, x\in \overline{B_{R_\epsilon}}.
\end{equation}

Thus if $f(t,x)>g_\epsilon(t,x)$ at some point $(t,x)$, then there must exist a first crossing point point.  That is, there must be a point $(t_0,x_0)\in (0,T]\times\overline{B_{R_\epsilon}}$ such that 

\begin{equation}
\begin{split}
f(t,x)\leq g_\epsilon (t,x)&, \quad \forall (t,x)\in [0,t_0]\times \R^{d-1}, 
\\ f(t_0,x_0)=g_\epsilon(t_0,x_0).
\end{split}
\end{equation}
At this crossing point, we have that 
\begin{equation}\label{e:crossingpointbounds}
\begin{split}
\partial_t f(t_0,x_0)\geq \partial_t g_\epsilon(t_0,x_0),
\\ \nabla f(t_0,x_0)=\nabla g_\epsilon (t_0,x_0),
\\ \delta_hf(t_0,x_0) \leq \delta_h g_\epsilon(t_0,x_0).
\end{split}
\end{equation}

Let $\epsilon <<1$.  Then we claim that $g_\epsilon $ is a strict superoslution of Muskat on $[\delta, T]\times \R^{d-1}$ with $||\nabla g_\epsilon ||_{L^\infty} < \displaystyle\frac{1}{\sqrt{2d-1}}$.  It then follows by the Muskat equation and Lemma \ref{l:comparison} that 
\begin{equation}
\begin{split}
\partial_t f(t_0,x_0) &\leq \int\limits_{\R^{d-1}} \frac{\delta_h f(t_0,x_0) - \nabla f(t_0,x_0)\cdot h}{(\delta_h f(t_0,x_0)^2+|h|^2)^{d/2}} dh 
\\&\leq \int\limits_{\R^{d-1}} \frac{\delta_h g_\epsilon(t_0,x_0) - \nabla g_\epsilon(t_0,x_0)\cdot h}{(\delta_h g_\epsilon(t_0,x_0)^2+|h|^2)^{d/2}} dh
\\&<\partial_t g_\epsilon (t_0,x_0),
\end{split}
\end{equation}
contradicting \eqref{e:crossingpointbounds}.  

Since $||\nabla g||_{L^\infty} < \displaystyle\frac{1}{\sqrt{2d-1}}$ and $\phi \in \dot{W}^{1,\infty}$ is smooth, its clear that for $\epsilon$ sufficiently small that $||\nabla g_\epsilon ||_{L^\infty} < \displaystyle\frac{1}{\sqrt{2d-1}}$.  So now we just need to show that $g_\epsilon$ is a strict super solution.  Direct calculation gives
\begin{equation}
\begin{split}\label{e:gepsilonsuper}
\int\limits_{\R^{d-1}} \frac{\delta_h g_\epsilon(t,x) - \nabla g_\epsilon(t,x)\cdot h}{(\delta_h g_\epsilon(t,x)^2+|h|^2)^{d/2}} dh&= \int\limits_{\R^{d-1}} \frac{\delta_h g(t,x) - \nabla g(t,x)\cdot h}{(\delta_h g(t,x)^2+|h|^2)^{d/2}} dh+\epsilon \int\limits_{\R^{d-1}} \frac{\delta_h \phi(x) - \nabla \phi(x)\cdot h}{(\delta_h g_\epsilon(t,x)^2+|h|^2)^{d/2}} dh
\\&\qquad +\int\limits_{\R^{d-1}}\left(\frac{\delta_h g(t,x)-\nabla g(t,x)\cdot h}{(\delta_h g_\epsilon(t,x)^2+|h|^2)^{d/2}} -\frac{\delta_h g(t,x)-\nabla g(t,x)\cdot h}{(\delta_h g(t,x)^2+|h|^2)^{d/2}}\right)dh
\\&\leq \partial_t g_\epsilon (t,x)-\mu +\epsilon \int\limits_{\R^{d-1}} \frac{\delta_h \phi(x) - \nabla \phi(x)\cdot h}{(\delta_h g_\epsilon(t,x)^2+|h|^2)^{d/2}} dh
\\&\qquad +\int\limits_{\R^{d-1}}\left(\frac{\delta_h g(t,x)-\nabla g(t,x)\cdot h}{(\delta_h g_\epsilon(t,x)^2+|h|^2)^{d/2}} -\frac{\delta_h g(t,x)-\nabla g(t,x)\cdot h}{(\delta_h g(t,x)^2+|h|^2)^{d/2}}\right)dh
\end{split}
\end{equation}

As $\phi$ is smooth and $g$ is a classical supersolution, for any $(t,x)\in [\delta, T]\times \R^{d-1}$ we can bound
\begin{equation}
\begin{split}
 \bigg|\int\limits_{\R^{d-1}} \frac{\delta_h \phi(x_0) - \nabla \phi(x_0)\cdot h}{(\delta_h g_\epsilon(t,x)^2+|h|^2)^{d/2}} dh\bigg| 
 &\leq||D^2\phi||_{L^\infty}\int\limits_{|h|\leq 1} \frac{1}{|h|^{d-2}}dh + \int\limits_{|h|>1} \frac{|\delta_h \phi(x)|}{|h|^d}dh 
 \\& \quad + \frac{||\nabla \phi||_{L^\infty}}{2}\int\limits_{|h|>1} \bigg| \frac{|h|}{(\delta_{h} g_\epsilon(t,x)^2+|h|^2)^{d/2}} - \frac{|h|}{(\delta_{-h} g_\epsilon(t,x)^2+|h|^2)^{d/2}} dh\bigg|
 \\&\leq C(\phi,g,\delta)
 \end{split}
\end{equation}
Hence taking $\epsilon < \frac{\mu}{3C(\phi,g,\delta)}$ we have that 
\begin{equation}\label{e:gepsilonsuper1}
\epsilon \int\limits_{\R^{d-1}} \frac{\delta_h \phi(x) - \nabla \phi(x)\cdot h}{(\delta_h g_\epsilon(t,x)^2+|h|^2)^{d/2}} dh <\frac{\mu}{3}.
\end{equation}

Similarly, since $g$ is a classical supersolution there is a $r = r(\mu,\delta)>0$ such that for any $(t,x)\in [\delta, T]\times \R^d$  
\begin{equation}\label{e:gepsilonsuper2}
\int\limits_{|h|<r} \frac{|\delta_h g(t,x)-\nabla g(t,x)\cdot h|}{|h|^{d}}dh < \frac{\mu}{3}.
\end{equation}
As we have that 
\begin{equation}
\bigg|\frac{1}{(\delta_h g_\epsilon(t,x)^2+|h|^2)^{d/2}} -\frac{1}{(\delta_h g(t,x)^2+|h|^2)^{d/2}}\bigg| \leq\epsilon \frac{ C(d)|\delta_h \phi(x)|}{|h|^{d+1}}
\end{equation}
we can similarly bound
\begin{equation}\label{e:gepsilonsuper3}
\begin{split}
\int\limits_{|h|>r}\left(\frac{\delta_h g(t,x)-\nabla g(t,x)\cdot h}{(\delta_h g_\epsilon(t,x)^2+|h|^2)^{d/2}} -\frac{\delta_h g(t,x)-\nabla g(t,x)\cdot h}{(\delta_h g(t,x)^2+|h|^2)^{d/2}}\right)dh 
&\leq \epsilon \int\limits_{|h|>r} |\delta_h g(t,x)-\nabla g(t,x)\cdot h| \frac{ C(d)|\delta_h \phi(x)|}{|h|^{d+1}} dh 
\\& \leq 2\epsilon C(d) ||\nabla g||_{L^\infty} \int\limits_{|h|>r}\frac{|\delta_h \phi(x)|}{|h|^d}dh 
\\& \leq \frac{\mu}{3},
\end{split}
\end{equation}
for $\epsilon>0$ sufficiently small.  Plugging \eqref{e:gepsilonsuper1}, \eqref{e:gepsilonsuper2}, \eqref{e:gepsilonsuper3} into \eqref{e:gepsilonsuper}, we get that 
\begin{equation}
\partial_t g_\epsilon (t,x) < \int\limits_{\R^{d-1}} \frac{\delta_h g_\epsilon(t,x) - \nabla g_\epsilon(t,x)\cdot h}{(\delta_h g_\epsilon(t,x)^2+|h|^2)^{d/2}} dh,
\end{equation}
for $(t,x)\in [\delta, T]\times \R^{d-1}$, completing the proof.  
\end{proof}

\begin{corollary}\label{c:growthbound}(Growth Bounds)

Let $f:[0,T]\times \R^{d-1}\to \R$ be a classical solution to the Muskat equation with $||\nabla_x f_0||_{L^\infty} < \displaystyle\frac{1}{\sqrt{2d-1}}$.  Then for any $h\in \R^{d-1}$, the function 
\begin{equation}\label{e:increments}
t\to\sup\limits_{x\in \R^{d-1}} f(t,x+h)-f(t,x),
\end{equation}
is nonincreasing.  In particular, 
\begin{equation}\label{e:growthbound}
\sup\limits_{x}\max\limits_{|h|\leq R} |f_0(x+h)-f_0(x)| \leq \Omega(R) \qquad \Rightarrow \qquad \sup\limits_{x}\max\limits_{|h|\leq R} |f(t,x+h)-f(t,x)| \leq \Omega(R).
\end{equation}
\end{corollary}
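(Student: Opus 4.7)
The plan is to exploit the translation invariance of the Muskat equation together with the comparison principle (Theorem \ref{t:comparison2}). The key observation is that if $f$ is a classical solution on $[0,T] \times \R^{d-1}$ with $||\nabla_x f||_{L^\infty} < 1/\sqrt{2d-1}$, then for any fixed $h \in \R^{d-1}$ and any constant $M \in \R$, the shifted function $(t,x) \mapsto f(t,x+h) + M$ is also a classical solution with the same slope bound. This is immediate because Definition \ref{d:classical} is stable under spatial and vertical translation: the integro-differential equation, the $C^1$ regularity, the moduli $\omega, \Omega$, and the $L^\infty$ gradient bound are all preserved.

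To prove monotonicity of $t \mapsto \sup_x[f(t,x+h)-f(t,x)]$, I would fix $t_0 \in [0,T)$, set
\[
M := \sup_{x} \bigl(f(t_0,x+h) - f(t_0,x)\bigr),
\]
which is finite because the classical-solution assumption forces $|f(t_0,x+h)-f(t_0,x)| \leq \Omega(|h|)$. Writing $g(t,x) := f(t,x+h)$ and $\tilde f(t,x) := f(t,x) + M$, both are classical solutions on $[t_0,T]$ with slope bounded by $1/\sqrt{2d-1}$, and $g(t_0,\cdot) \leq \tilde f(t_0,\cdot)$ pointwise by construction. Theorem \ref{t:comparison2}, whose proof is unchanged if we replace the initial time $0$ by $t_0$, then yields $f(t,x+h) \leq f(t,x) + M$ for all $t \in [t_0,T]$, and taking the supremum in $x$ gives $\sup_x[f(t,x+h)-f(t,x)] \leq M$ for all $t \geq t_0$, which is the desired monotonicity of \eqref{e:increments}.

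For the growth bound \eqref{e:growthbound}, I would apply the monotonicity from $t_0 = 0$ to both $h$ and $-h$. For any $|h| \leq R$, the direct inequality reads
\[
\sup_x \bigl[f(t,x+h)-f(t,x)\bigr] \leq \sup_x \bigl[f_0(x+h)-f_0(x)\bigr] \leq \Omega(R).
\]
For the reverse direction, the change of variables $y = x+h$ gives
\[
\sup_x \bigl[f(t,x)-f(t,x+h)\bigr] = \sup_y \bigl[f(t,y+(-h))-f(t,y)\bigr] \leq \sup_y \bigl[f_0(y+(-h))-f_0(y)\bigr] \leq \Omega(|h|) \leq \Omega(R),
\]
where the middle inequality is the monotonicity statement applied to the vector $-h$. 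Combining the two pointwise bounds yields $|f(t,x+h)-f(t,x)| \leq \Omega(R)$ for every $|h| \leq R$.

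I expect no substantive obstacle in carrying out this argument: the main work is just the bookkeeping of verifying that spatially translated and vertically shifted classical solutions remain classical solutions in the precise sense of Definition \ref{d:classical}, and of noting that Theorem \ref{t:comparison2} applies with any intermediate time $t_0$ in place of $0$. Both facts follow immediately from the structure of \eqref{e:dequation} and from inspection of the proof of the comparison principle, so the corollary reduces essentially to a clean application of Theorem \ref{t:comparison2} to the translated pair $\bigl(f(t,x+h),\, f(t,x)+M\bigr)$.
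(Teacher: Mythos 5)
Your proposal is correct and follows essentially the same route as the paper: both arguments observe that $g(t,x)=f(t,x+h)+C$ is again a classical solution by translation invariance, invoke Proposition \ref{p:maximumprinciple} to keep the slope strictly below $1/\sqrt{2d-1}$ so that Theorem \ref{t:comparison2} applies, and then compare $f(\cdot,\cdot+h)$ with $f+M$. You spell out two details the paper leaves implicit (restarting the comparison at an arbitrary time $t_0$ to get genuine monotonicity, and applying the bound to both $h$ and $-h$ to control the absolute value), but these are exactly the intended bookkeeping steps.
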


\begin{proof}
By Proposition \ref{p:maximumprinciple} $||\nabla_x f_0||_{L^\infty}< \displaystyle\frac{1}{\sqrt{2d-1}} \Rightarrow ||\nabla_x f||_{L^\infty}(t)\leq \displaystyle\frac{1}{\sqrt{2d-1}}$.  Thus Theorem \ref{t:comparison2} applies.  

Note that solutions to the Muskat equation are closed under translations in $\R^d$.  Hence, if $(t,x)\to f(t,x)$ is a solution, then so is $(t,x)\to f(t,x+h)+C$ for any fixed $h\in \R^{d-1}$ and $C\in \R$.  Applying Theorem \ref{t:comparison} to f and $g(t,x) = f(t,x+h)+C$ for appropriately chosen $C$ thus implies \eqref{e:increments}.  

Taking the supremum of \eqref{e:increments} over $|h|\leq R$ then gives \eqref{e:growthbound}.  
\end{proof}

\begin{corollary}(Uniqueness)\label{c:uniqueness}
Let $f_0,g_0\in \dot{W}^{1,\infty}$ with $||\nabla_x f_0||_{L^\infty},||\nabla_x g_0||_{L^\infty} < \displaystyle\frac{1}{\sqrt{2d-1}}$.  Then if $f,g:[0,T]\times \R^{d-1}\to \R$ are two classical solutions to the Muskat equation \eqref{e:fequation}, then 
\begin{equation}
\sup\limits_{x\in\R^{d-1}} f(t,x)-g(t,x)\leq \sup\limits_{x\in\R^{d-1}}f_0(x)-g_0(x).
\end{equation}
In particular, $f_0 \equiv g_0 \ \Rightarrow \ f\equiv g$.  
\end{corollary}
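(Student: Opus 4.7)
\medskip

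The plan is to deduce this corollary directly from the comparison principle (Theorem \ref{t:comparison2}) by exploiting the invariance of the Muskat equation \eqref{e:dequation} under the addition of constants in the vertical direction.

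Set $M := \sup_{x\in\R^{d-1}} \bigl(f_0(x)-g_0(x)\bigr)$. If $M = +\infty$ the claim is vacuous, so assume $M<\infty$. Define the shifted function
\[
\tilde g(t,x) := g(t,x) + M.
\]
Because the right-hand side of \eqref{e:dequation} depends on $g$ only through the increments $\delta_h g$ and the gradient $\nabla_x g$, adding a constant leaves the equation invariant; hence $\tilde g$ is again a classical solution in the sense of Definition \ref{d:classical}, with the same slope bound $\|\nabla_x\tilde g\|_{L^\infty}=\|\nabla_x g\|_{L^\infty}<\frac{1}{\sqrt{2d-1}}$, and with the same moduli $\omega,\Omega$ on compact subsets.

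By the definition of $M$ we have $f(0,x)\leq g(0,x)+M=\tilde g(0,x)$ for every $x\in\R^{d-1}$. The pair $(f,\tilde g)$ therefore satisfies all hypotheses of Theorem \ref{t:comparison2}, which yields $f(t,x)\leq \tilde g(t,x)=g(t,x)+M$ for every $(t,x)\in [0,T]\times\R^{d-1}$. Taking the supremum in $x$ gives
\[
\sup_{x\in\R^{d-1}} \bigl(f(t,x)-g(t,x)\bigr)\leq M = \sup_{x\in\R^{d-1}} \bigl(f_0(x)-g_0(x)\bigr),
\]
which is the stated inequality. For the uniqueness claim, if $f_0\equiv g_0$ then $M=0$, so $f\leq g$ everywhere; swapping the roles of $f$ and $g$ in the above argument gives $g\leq f$, and therefore $f\equiv g$.

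There is no real obstacle here beyond checking that $\tilde g$ still qualifies as a classical solution, which is immediate since the Muskat operator only sees increments and derivatives of the interface; the whole work was already done in establishing Theorem \ref{t:comparison2}.
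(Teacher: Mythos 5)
Your argument is essentially the paper's own proof: both deduce the corollary from Theorem \ref{t:comparison2} by using the invariance of the Muskat equation under addition of a constant, comparing $f$ with $g + \sup_y(f_0(y)-g_0(y))$, and swapping roles for the uniqueness claim. One small step is missing: Theorem \ref{t:comparison2} requires $\|\nabla_x f\|_{L^\infty}, \|\nabla_x g\|_{L^\infty} < \frac{1}{\sqrt{2d-1}}$ as space-time bounds, whereas the corollary's hypothesis only controls the \emph{initial} slopes $\|\nabla_x f_0\|_{L^\infty}, \|\nabla_x g_0\|_{L^\infty}$; you assert $\|\nabla_x \tilde g\|_{L^\infty} < \frac{1}{\sqrt{2d-1}}$ without justification. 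The paper fills this by first invoking the maximum principle for the slope (Proposition \ref{p:maximumprinciple}) to propagate the initial bound to all later times; adding that citation completes your argument.
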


\begin{proof}
Proposition \ref{p:maximumprinciple} implies that if $f,g$ are two classical solutions of the Muskat equation with initial data $f_0.g_0$ satisfying 
\begin{equation}
||\nabla_x f_0||_{L^\infty},||\nabla_x g_0||_{L^\infty} <\frac{1}{\sqrt{2d-1}},
\end{equation}
then the same is true for later times $t>0$.  Taking advantage of the fact that solutions to the Muskat equation are closed under the addition of a constant $C\in \R$, we thus have by Theorem \ref{t:comparison} that 
\begin{equation}
f_0(x)\leq g_0(x) + \sup\limits_{y\in \R^{d-1}}f_0(y)-g_0(y) \qquad \Rightarrow \qquad f(t,x) \leq g(t,x) +  \sup\limits_{y\in \R^{d-1}}f_0(y)-g_0(y).
\end{equation}
\end{proof}

Under an assumption of uniformly bounded slope, it is possible to extend the proof of the comparison principle Theorem \ref{t:comparison2} to work for test functions with suitably small slope.  

\begin{proposition}\label{p:comparisongeneral}
Let $f:[0,T]\times \R^{d-1} \to \R$ be a classcial subsolution to the Muskat equation with $||\nabla_x f||_{L^\infty}\leq B$ for some fixed $B<\infty$.  Then there exists a constant $c(B)>0$ such that if $g:[0,T]\times \R^{d-1}\to \R$ is a classical supersolution to Muskat with $||\nabla_x g||_{L^\infty} < c(B)$ and $f(0,x)\leq g(0,x)$, we have
\begin{equation}
f(t,x)\leq g(t,x) \qquad \forall (t,x)\in [0,T]\times \R^{d-1}.
\end{equation}
Furthermore, for $B$ sufficiently large, we may take the constant $c(B) = \displaystyle\frac{B}{2(B^2+1)^{d/2}}$.  
\end{proposition}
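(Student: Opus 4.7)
The plan is to follow the proof of Theorem \ref{t:comparison2} almost verbatim, replacing the monotonicity Lemma \ref{l:comparison} by a generalized version adapted to the asymmetric slope bounds. First I would introduce the perturbed supersolution $g_\epsilon = g + \eta + \mu t + \epsilon \varphi$ with $\eta, \mu, \epsilon > 0$ small and $\varphi$ a smooth nonnegative sublinear weight exactly as in the proof of Theorem \ref{t:comparison2}. Since $||\nabla_x g||_{L^\infty} < c(B)$ strictly, taking $\epsilon$ small keeps $||\nabla_x g_\epsilon||_{L^\infty} < c(B)$, and the argument that $g_\epsilon$ is a strict supersolution on $[\delta, T] \times \R^{d-1}$ and that any first touching point must lie in a compact set uses only smoothness of $g, \varphi$, so it carries over unchanged. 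We therefore arrive at a first crossing point $(t_0, x_0)$ at which $\nabla_x f = \nabla_x g_\epsilon$, $\delta_h f \leq \delta_h g_\epsilon$ for every $h$, and $\partial_t f \geq \partial_t g_\epsilon$.

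To derive a contradiction I need the pointwise integrand inequality
\begin{equation*}
\frac{\delta_h f - \nabla_x f \cdot h}{(\delta_h f^2 + |h|^2)^{d/2}} \leq \frac{\delta_h g_\epsilon - \nabla_x g_\epsilon \cdot h}{(\delta_h g_\epsilon^2 + |h|^2)^{d/2}}
\end{equation*}
at $(t_0, x_0)$. Writing $t = \delta_h f / |h|$, $s = \delta_h g_\epsilon / |h|$, and $a = \nabla_x f(t_0, x_0) \cdot h / |h|$, this reduces to showing $\phi(t) \leq \phi(s)$ for $\phi(u) = (u - a)/(u^2 + 1)^{d/2}$, subject to the asymmetric constraints $-B \leq t \leq s$, $|s| \leq c(B) + O(\epsilon)$, and $|a| \leq c(B) + O(\epsilon)$. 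A direct calculation gives $\phi'(u) = ((1-d)u^2 + a d u + 1)/(u^2 + 1)^{d/2 + 1}$, so $\phi$ is decreasing on $(-\infty, u_-)$, increasing on $(u_-, u_+)$, and decreasing on $(u_+, \infty)$, where $u_\pm$ are the roots of the quadratic and $u_\pm \to \pm 1/\sqrt{d-1}$ as $|a| \to 0$. For $t \geq u_-$ the desired inequality is immediate from $\phi$ being increasing on $(u_-, u_+) \supseteq [-c(B), c(B)]$, which holds once $c(B) \leq 1/\sqrt{d-1}$ (automatic for $B$ large since $c(B) \to 0$). In the remaining case $t \in [-B, u_-)$, $\phi$ is decreasing, so $\phi(t) \leq \phi(-B)$, and it suffices to prove the endpoint bound $\phi(-B) \leq \phi(s)$ for all $s \in [-c(B), c(B)]$, reducing further to the worst case $\phi(-B) \leq \phi(-c(B))$.

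After cross-multiplication, the worst-case bound reads $(B + a)(c(B)^2 + 1)^{d/2} \geq (c(B) + a)(B^2 + 1)^{d/2}$ for all $|a| \leq c(B)$. Since the coefficient of $a$ on the right exceeds that on the left, the adversarial choice is $a = c(B)$; using $c(B)(B^2 + 1)^{d/2} = B/2$ it becomes $(B + c(B))(c(B)^2 + 1)^{d/2} \geq B$, which holds with slack as $(c(B)^2 + 1)^{d/2} \geq 1$ and $B + c(B) > B$. Integrating $\phi(t) \leq \phi(s)$ against $h \in \R^{d-1}$ contradicts $\partial_t f \geq \partial_t g_\epsilon$ at the crossing point, and sending $\epsilon, \mu, \eta \to 0^+$ yields $f \leq g$. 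The main obstacle is the generalized monotonicity claim in the regime $t < u_-$, where $\phi$ fails to be globally monotone and the asymmetric slope bounds become binding; the precise constant $c(B) = B/(2(B^2+1)^{d/2})$ emerges exactly from matching $\phi(-B)$ to $\phi(-c(B))$ with the adversarial choice $a = c(B)$, and smaller values of $B$ are handled by the simpler reduction to Theorem \ref{t:comparison2} with $c(B) = 1/\sqrt{2d-1}$.
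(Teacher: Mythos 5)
Your proposal is correct and follows essentially the same route as the paper: reduce to the pointwise scalar inequality at a first crossing point of the perturbed supersolution, then verify it for the extremal configuration $t=-B$, $s=-c(B)$, $a=c(B)$, which is exactly where $c(B)=B/(2(B^2+1)^{d/2})$ comes from. Your analysis of the three monotonicity regions of $\phi$ merely fills in the justification the paper compresses into "it suffices to consider the extremal case" (note only that the threshold guaranteeing $[-c(B),c(B)]\subseteq(u_-,u_+)$ for all $|a|\le c(B)$ is $1/\sqrt{2d-1}$ rather than $1/\sqrt{d-1}$, which is harmless since you only need $c(B)$ small).
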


Proposition \ref{p:comparisongeneral} allows for the possibility of employing barrier arguments to control the long range properties of solutions, so long as we assume an a priori bound on the slope.  Taking $g(t,x) \equiv \max f_0$ is one trivial example, implying the $L^\infty$ maximum principle for the interface $f$.  

\begin{proof}
For ease of proof, we assume $B>>1$.  In particular, $B>\displaystyle\frac{1}{\sqrt{2d-1}}$.  Following the argument in the proof of Theorem \ref{t:comparison2}, it suffices to show that if $(t,x)\in (0,T)\times \R^{d-1}$ is a first crossing point, then 
\begin{equation}\label{e:compintinequal}
\int\limits_{\R^{d-1}} \frac{\delta_hf(x) + \nabla_x f(x)\cdot h}{(\delta_hf(x)^2+|h|^2)^{d/2}}dh \leq\int\limits_{\R^{d-1}} \frac{\delta_hg(x) + \nabla_x g(x)\cdot h}{(\delta_hg(x)^2+|h|^2)^{d/2}}dh.
\end{equation}
In particular, it would suffice to show for every $h\not = 0$ that the we have 
\begin{equation}
 \frac{\delta_hf(x) + \nabla_x f(x)\cdot h}{(\delta_hf(x)^2+|h|^2)^{d/2}} \leq \frac{\delta_hg(x) + \nabla_x g(x)\cdot h}{(\delta_hg(x)^2+|h|^2)^{d/2}}.
\end{equation}
Note that $\nabla_x f(x) = \nabla_x g(x)$ at the crossing point.  Multiplying both sides by $|h|^{d-1}$, letting $t = \displaystyle\frac{\delta_hf(x)}{|h|}, s = \displaystyle\frac{\delta_hg(x)}{|h|}, a = \nabla_xf(x)\cdot \frac{h}{|h|} = \nabla_x g(x) \cdot \frac{h}{|h|}$, it suffices to prove that there exists a constant $c(B)$ such that 
\begin{equation}\label{e:compinequaltoprove}
\frac{t-a}{(t^2+1)^{d-2}} \leq \frac{s-a}{(s^2+1)^{d/2}}, \qquad \forall t\leq s, \quad t\in [-B,B], \quad s,a\in (-c(B),c(B)).
\end{equation}
We can of course assume that $c(B)\leq \displaystyle\frac{1}{\sqrt{2d-1}}$.  It then follows  that whenever $|t|<\displaystyle\frac{1}{\sqrt{2d-1}}$ by Lemma \ref{l:comparison} that 
\begin{equation}
\frac{t-a}{(t^2+1)^{d-2}} \leq \frac{s-a}{(s^2+1)^{d/2}}, \qquad \forall t\leq s, \quad t>-\displaystyle\frac{1}{\sqrt{2d-1}}, \quad s,a\in (-c(B),c(B)).
\end{equation}
We thus just need to prove the inequality in the case that $t\in [-B, \displaystyle\frac{1}{\sqrt{2d-1}}]$, $s,a\in (-c(B),c(B))$.  As $B>>1$ by assumption, it suffices to consider the extremal case $t=-B, s=-c(B), a=c(b)$.  Thus we just need to take $c(B)$ so that 
\begin{equation}
\frac{-B-c(B)}{(B^2+1)^{d/2}} \leq \frac{-2c(B)}{(c(B)^2+1)^{d/2}}, 
\end{equation}
As 
\begin{equation}
\frac{B+c(B)}{(B^2+1)^{d/2}}  \geq \frac{B}{(B^2+1)^{d/2}}, \qquad \frac{2c(B)}{(c(B)^2+1)^{d/2}}\leq 2c(B),
\end{equation}
taking $c(B) = \displaystyle\frac{B}{2(B^2+1)^{d/2}} $suffices.  Thus for this choice of constant $c(B)$, \eqref{e:compinequaltoprove} holds.  Hence, we have \eqref{e:compintinequal} for any crossing point $(t,x)\in (0,T)\times \R^{d-1}$.  Repeating the argument of Theorem \ref{t:comparison2}, we can guarantee the existence of a crossing point for an arbitrarily small perturbation of $g$.  Hence, Proposition \ref{p:comparisongeneral} follows.  
\end{proof}

%%%%%%%%%%%%%%%%%%%%%%%%%%%%%%%%%%%%%%%%%%%%%%%%%%%%%%%%%%%%%%%%%%%%%%%

\section{Regularity over time}\label{s:regularitytime}

With the construction of the modulus $\overline{\omega}$, we get universal Lipschitz bounds in space for $\nabla_x f(t,\cdot)$.  By the structure of \eqref{e:fequation}, we also get regularity in space for $\partial_t f$.
\begin{proposition}\label{p:regularityspace}
Let $f:[0,T]\times \R^{2} \to \R$ be a classical solution to \eqref{e:fequation} with $||\nabla_x f_0||_{L^\infty}<\displaystyle\frac{1}{\sqrt{5}}$, $||D^2_x f||_{L^\infty}(t)\lesssim 1/t.$, and growth bounds 
\begin{equation}
\sup\limits_{x} \max\limits_{|h|\leq R} |f_0(x+h)-f_0(x)|\leq \Omega(R), \qquad \int\limits_{1}^\infty  \frac{\Omega(R)}{R^2}dR < \infty.
\end{equation} 
Then $\partial_t f(t,\cdot)$ is Log-Lipschitz in space with
\begin{equation}
\begin{split}
||\partial_t f  ||_{L^\infty}(t) &\lesssim \inf\limits_{M>0} \frac{M}{t} + \int\limits_{M}^\infty \frac{\Omega(R)}{R^2}dR 
\\ |\partial_t f(t,x) -\partial_t f(t,y)| &\lesssim \frac{|x-y|}{t}\left(-\log\left(\frac{|x-y|}{t}\right) +\int\limits_{t}^\infty \frac{\Omega(R)}{R^2}dR\right), \qquad |x-y|\leq \frac{t}{2}.
\end{split}
\end{equation}
In particular, we have for any such solution $f$ and $0<t<1/2$ that 
\begin{equation}
\begin{split}
||\partial_t f ||_{L^\infty}(t)&\lesssim -\log(t) + \int\limits_{1}^\infty \frac{\Omega(R)}{R^2}dR,
\\ |\partial_t f(t,x)-\partial_t f(t,y)|&\lesssim \frac{|x-y|}{t}\left(-\log\left(\frac{|x-y|}{t}\right) -\log(t)+ \int\limits_1^\infty \frac{\Omega(R)}{R^2}dR \right), \qquad |x-y|\leq \frac{t}{2}.
\end{split}
\end{equation}
In the case that we can take $\Omega$ of the form $\Omega(R)=\Omega_0 R^{\alpha}$ for some $0\leq \alpha <1$, then we also get the large time decay 
\begin{equation}
||\partial_t f||_{L^\infty}(t) \lesssim \frac{\Omega_0^{\frac{1}{2-\alpha}}t^{-\frac{1-\alpha}{2-\alpha}}}{1-\alpha},
\end{equation}
\end{proposition}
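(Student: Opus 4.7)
The plan is to work directly with the integral formulation \eqref{e:fequation2} of $\partial_t f$, splitting the $h$-integration dyadically and using two different tools in the two regimes: the $C^{1,1}$ bound $\|D^2_x f\|_{L^\infty}(t) \lesssim 1/t$ controls small $|h|$, while the sublinear growth bound $\Omega$ controls large $|h|$, with the principal value symmetrization underlying Lemma \ref{l:asymmetry} bridging the two scales.

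\textbf{$L^\infty$ bound.} Fix $M>0$. On $|h|\leq M$, the Taylor inequality $|\delta_h f-\nabla f\cdot h|\leq \tfrac{1}{2}\|D^2_x f\|_{L^\infty}|h|^2\lesssim |h|^2/t$ together with $(\delta_hf^2+|h|^2)^{3/2}\geq |h|^3$ gives integrand $\lesssim 1/(t|h|)$, integrating to $\lesssim M/t$ in $\R^2$. On $|h|>M$, the integrand is only conditionally convergent at infinity because of the linear term $\nabla f\cdot h$, so I would pair $h$ with $-h$. Writing $A^\pm=(\delta_{\pm h}f(x)^2+|h|^2)^{-3/2}$ and using $|\delta_h f|\leq \Omega(|h|)$, the asymmetry estimates from Lemma \ref{l:asymmetry}, and the refined mean value bound $|A^+-A^-|\lesssim |\delta_h f-\delta_{-h}f|/|h|^4\lesssim \Omega(|h|)/|h|^4$, the symmetrized integrand satisfies $|F(h)+F(-h)|\lesssim \Omega(|h|)/|h|^3$, giving a contribution $\lesssim \int_M^\infty \Omega(R)/R^2\,dR$. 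Taking the infimum over $M$ yields the asserted bound. The $t<1/2$ specialization follows by setting $M=t$ and using the universal estimate $\Omega(R)\leq \|\nabla_x f_0\|_{L^\infty} R$ to bound $\int_t^1 \Omega(R)/R^2\,dR\lesssim -\log t$.

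\textbf{Log-Lipschitz bound.} For $|x-y|\leq t/2$, write $\partial_t f(t,x)-\partial_t f(t,y)=\int [F_x(h)-F_y(h)]\,dh$ with $F_z(h)=a_z(h)/B_z(h)$, $a_z=\delta_h f(z)-\nabla f(z)\cdot h$, $B_z=(\delta_h f(z)^2+|h|^2)^{3/2}$, and partition the $h$-integration into three regions. On $|h|\leq 2|x-y|$, the brute force bound $|F_x|+|F_y|\lesssim 1/(t|h|)$ from the $L^\infty$ argument integrates to $\lesssim |x-y|/t$. On $2|x-y|\leq |h|\leq t$, decompose
\begin{equation*}
F_x(h)-F_y(h)=\frac{a_x-a_y}{B_x}+a_y\Bigl(\tfrac{1}{B_x}-\tfrac{1}{B_y}\Bigr);
\end{equation*}
the Lipschitz bounds $|\delta_h f(x)-\delta_h f(y)|\lesssim (|h|/t)|x-y|$ and $|\nabla f(x)-\nabla f(y)|\lesssim |x-y|/t$ then yield $|a_x-a_y|\lesssim |x-y||h|/t$ and $|B_x-B_y|\lesssim |x-y||h|^3/t$, producing an integrand $\lesssim |x-y|/(t|h|^2)$ that integrates to $\lesssim (|x-y|/t)\log(t/|x-y|)$. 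On $|h|>t$, I would combine the principal value pairing of the $L^\infty$ argument with the Lipschitz bounds on $\delta_h f$ and $\nabla f$ applied to the symmetrized quantity $G_z(h)=F_z(h)+F_z(-h)$ to obtain $|G_x(h)-G_y(h)|\lesssim |x-y|\Omega(|h|)/(t|h|^3)$, giving a far-field contribution $\lesssim (|x-y|/t)\int_t^\infty \Omega(R)/R^2\,dR$. Summing the three contributions gives the stated bound; the $t<1/2$ specialization follows exactly as for the $L^\infty$ bound.

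\textbf{Algebraic decay and main obstacle.} For $\Omega(R)=\Omega_0 R^\alpha$ with $\alpha\in[0,1)$, the function $M\mapsto M/t+\Omega_0 M^{\alpha-1}/(1-\alpha)$ is minimized at $M=(\Omega_0 t)^{1/(2-\alpha)}$, yielding the claimed decay $\lesssim \Omega_0^{1/(2-\alpha)} t^{-(1-\alpha)/(2-\alpha)}/(1-\alpha)$ after a direct computation. The hard step is the far-field estimate in the Log-Lipschitz argument: one must simultaneously exploit the principal value structure (to obtain absolute integrability at infinity) \emph{and} the Lipschitz dependence on $z$ (to extract a factor of $|x-y|$). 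The needed bound $|G_x(h)-G_y(h)|\lesssim |x-y|\Omega(|h|)/(t|h|^3)$ amounts to differentiating in $z$ each of the three terms appearing in the decomposition of $F_z(h)+F_z(-h)$ used in the $L^\infty$ estimate, and tracking how the $C^{1,1}$ factor $1/t$ enters each piece without spoiling the $\Omega/|h|^3$ tail.
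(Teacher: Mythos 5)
Your argument is essentially the paper's proof: the $L^\infty$ bound via Taylor/$C^{1,1}$ for $|h|\leq M$ plus the $h\leftrightarrow -h$ symmetrization and growth bounds for $|h|>M$, the optimization in $M$, the three-region split $|h|\lesssim |x-y|$, $|x-y|\lesssim |h|\lesssim t$, $|h|\gtrsim t$ for the Log-Lipschitz estimate, and the choice $M=(\Omega_0 t)^{1/(2-\alpha)}$ are all exactly what the paper does, and your treatment of the first two regions matches the paper's $I_1$, $I_2$ line by line. The one place you diverge is the far field $|h|>t$: the paper does \emph{not} symmetrize the full difference $F_x-F_y$ there; it isolates the single conditionally convergent term $-(\nabla_x f(x)-\nabla_x f(y))\cdot h\,B_y^{-1}$, pulls out $|\nabla_x f(x)-\nabla_x f(y)|\lesssim |x-y|/t$, and symmetrizes only the remaining drift integral $\int_{|h|>t} h\,B_y^{-1}\,dh\lesssim\int_t^\infty\Omega(R)R^{-2}dR$, while the other far-field terms are handled with the crude bound $|\delta_h f(x)-\delta_h f(y)|\lesssim|x-y|$. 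Your alternative of symmetrizing first and then differencing in $z$ also works, but the asserted pointwise bound $|G_x(h)-G_y(h)|\lesssim |x-y|\,\Omega(|h|)/(t|h|^3)$ is too strong as stated: the piece obtained by differencing the even numerator, namely $\bigl[(\delta_hf(x)+\delta_{-h}f(x))-(\delta_hf(y)+\delta_{-h}f(y))\bigr]B_y(-h)^{-1}$, is only $O(|x-y|/|h|^3)$ (bounded slope gives no factor $\Omega(|h|)/t$), and $|x-y|\not\lesssim |x-y|\Omega(|h|)/t$ in general. This does not damage the conclusion, since $\int_{|h|>t}|x-y|\,|h|^{-3}dh\lesssim |x-y|/t$, which is absorbed by the $-\log(|x-y|/t)\geq\log 2$ factor in the target estimate; you should simply state the far-field bound as a sum $\lesssim |x-y|/|h|^3+|x-y|\Omega(|h|)/(t|h|^3)$ rather than the single product form.
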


\begin{proof}
Note that by Corollary \ref{c:growthbound}, we have for all $t>0$ that 
\begin{equation}
\sup\limits_{x\in \R^{2}}\max\limits_{|h|\leq R} |f(t,x+h)-f(t,x)|\leq \Omega(R).
\end{equation}
We have that
\begin{equation}\label{e:timederivativebound}
\begin{split}
|\partial_t f(t,x)| &= \bigg|\int\limits_{\R^{2}} \frac{\delta_hf(t,x) - \nabla_x f(t,x)\cdot h}{(\delta_hf(t,x)^2+|h|^2)^{3/2}} dh\bigg|  \leq \frac{1}{2}\bigg|\int\limits_{\R^{2}} \frac{\delta_hf(t,x) +\delta_{-h}f(t,x)}{(\delta_{-h}f(t,x)^2 +|h|^2)^{3/2}}dh\bigg|
\\&\qquad+ \frac{1}{2}\bigg|\int\limits_{\R^{2}} \frac{(\delta_hf(t,x)-\nabla_x f(t,x)\cdot h)\left[(\delta_hf(t,x)^2+|h|^2)^{3/2}-(\delta_{-h}f(t,x)^2+|h|^2)^{3/2}\right]}{(\delta_hf(t,x)^2+|h|^2)^{3/2}(\delta_{-h}f(t,x)^2 +|h|^2)^{3/2}} dh\bigg|
\end{split}
\end{equation}
We bound each of the above integrals in two ways.  For small $h$, we use our second derivative bounds to get 
\begin{equation}\label{e:timesmallhbound}
\begin{split}
 \bigg|\frac{\delta_hf(t,x) +\delta_{-h}f(t,x)}{(\delta_{-h}f(t,x)^2 +|h|^2)^{3/2}}\bigg| \lesssim \frac{|h|^2}{t}\frac{1}{|h|^{3}}= \frac{1}{t}\frac{1}{|h|},
\\ \bigg| \frac{(\delta_hf(t,x)-\nabla_x f(t,x)\cdot h)\left[(\delta_hf(t,x)^2+|h|^2)^{3/2}-(\delta_{-h}f(t,x)^2+|h|^2)^{3/2}\right]}{(\delta_hf(t,x)^2+|h|^2)^{3/2}(\delta_{-h}f(t,x)^2 +|h|^2)^{3/2}}\bigg| \lesssim \frac{|h|^2}{t} \frac{|h|^{3}}{|h|^{6}} = \frac{1}{t}\frac{1}{|h|}.
\end{split}
\end{equation}
Similarly for large $h$, we can use our growth bounds to get 
\begin{equation}\label{e:timelargehbound}
\begin{split}
 \bigg|\frac{\delta_hf(x) +\delta_{-h}f(x)}{(\delta_{-h}f(x)^2 +|h|^2)^{3/2}}\bigg| &\lesssim \frac{\Omega(|h|)}{|h|^{3}},
\\ \bigg| \frac{(\delta_hf(x)-\nabla_x f(x)\cdot h)\left[(\delta_hf(x)^2+|h|^2)^{3/2}-(\delta_{-h}f(x)^2+|h|^2)^{3/2}\right]}{(\delta_hf(x)^2+|h|^2)^{3/2}(\delta_{-h}f(x)^2 +|h|^2)^{3/2}}\bigg| &\lesssim \frac{|h|}{|h|^{6}} \bigg|\delta_h f(x)-\delta_{-h}f(x)\bigg| \cdot |h|^{2} 
\\&\lesssim \frac{\Omega(|h|)}{|h|^3}
\end{split}
\end{equation}
Hence for any $M>0$, plugging in \eqref{e:timesmallhbound} for $|h|\leq M$ and \eqref{e:timelargehbound} for $|h|\geq M$ into \eqref{e:timederivativebound} gives us 
\begin{equation}
|\partial_t f(t,x)| \lesssim \frac{M}{t} + \int\limits_{M}^\infty  \frac{\Omega(R)}{R^2}dR.  
\end{equation}
%\begin{equation}
%\begin{split}
%|\partial_t f(t,x)|&
% \lesssim \int\limits_{|h|\leq t} \frac{1}{t|h|^{d-2}}dh  + \int\limits_{t\leq |h|\leq 1}\frac{1}{|h|^{d-1}}dh   +  \int\limits_{|h|>1}\frac{\Omega(|h|)}{|h|^d}dh \\&\lesssim -\log(t) + 1.
%\end{split}
%\end{equation}
In particular for $t<\displaystyle\frac{1}{2}$, we may take $M = t$ and $\Omega(R) = ||\nabla_x f||_{L^\infty}R $ for $R<1$ to get 
\begin{equation}
|\partial_t f(t,x)| \lesssim 1 + \int\limits_{t}^1 \frac{1}{R}dR + \int\limits_{1}^\infty  \frac{\Omega(R)}{R^2}dR \lesssim -\log(t) + \int\limits_{1}^\infty \frac{\Omega(R)}{R^2}dR, \qquad 0<t<\frac{1}{2}.
\end{equation}
If $\Omega(R)$ is of the form $\Omega(R) = \Omega_0 R^{\alpha}$ for some $0\leq \alpha<1$, then taking $M = (\Omega_0 t)^{1/(2-\alpha)}$ gives
\begin{equation}
|\partial_t f(t,x)| \lesssim \frac{\Omega_0^{\frac{1}{2-\alpha}}t^{-\frac{1-\alpha}{2-\alpha}}}{1-\alpha},
\end{equation}
which is useful for large times $t$.

For regularity in space, fix some time $t>0$ and $x,y\in\R^{2}$ with $|x-y|\leq\displaystyle\frac{t}{2}$.  Then 
\begin{equation}
\begin{split}
|\partial_t f(t,x)  - \partial_t f(t,xy)| &= \bigg|\int\limits_{\R^{2}} \frac{\delta_h f(x) - \nabla_x f(x)\cdot h}{(\delta_h f(x)^2 + |h|^2)^{3/2}} -\frac{\delta_h f(y) - \nabla_x f(y)\cdot h}{(\delta_h f(y)^2 + |h|^2)^{3/2}} dh\bigg| 
\\&\leq \bigg|\int\limits_{|h|\leq |x-y|}\bigg| + \bigg| \int\limits_{|x-y|\leq |h|\leq t} \bigg| + \bigg| \int\limits_{|h|\geq t} \bigg| := I_1+I_2+I_3.
\end{split}
\end{equation}
For small $|h|\leq |x-y|$, it is best to bound each term separately with our second derivative bounds.  Thus similarly to above,
\begin{equation}
\begin{split}\label{e:timespaceregI1}
 \bigg|\int\limits_{|h|\leq |x-y|} \frac{\delta_h f(x) - \nabla_x f(x)\cdot h}{(\delta_h f(x)^2 + |h|^2)^{3/2}} dh\bigg|, \ \bigg|\int\limits_{|h| \leq |x-y|}\frac{\delta_h f(y) - \nabla_x f(y)\cdot h}{(\delta_h f(y)^2 + |h|^2)^{3/2}} dh\bigg| \lesssim \frac{|x-y|}{t},
 \\ \Rightarrow I_1 \lesssim \frac{|x-y|}{t}.
 \end{split}
\end{equation}

For midsize $|x-y|\leq |h|\leq t$, we split the integral into two pieces: 
\begin{equation}\label{e:midhfirstbound}
\begin{split}
I_2 & \leq \bigg| \int\limits_{ |x-y|\leq |h|\leq t} \frac{\delta_h f(t,x) -\nabla_x f(t,x)\cdot h - (\delta_hf(t,y)-\nabla_x f(t,y)\cdot h)}{(\delta_h f(t,y)^2+|h|^2)^{3/2}}dh\bigg|
\\& \ +\bigg|\int\limits_{|x-y|\leq |h|\leq t} \frac{(\delta_hf(t,x) -  \nabla_x f(t,x)\cdot h)\left[(\delta_hf(t,x)^2+|h|^2)^{3/2}-(\delta_hf(t,y)^2+|h|^2)^{3/2}\right])}{( \delta_hf(t,x)^2 + |h|^2)^{3/2}(\delta_hf(t,y)^2 + |h|^2)^{3/2}} dh\bigg|.
\end{split}
\end{equation}
Using our Lipschitz bounds on $\nabla_x f$, we get the bounds
\begin{equation}\begin{split}
\bigg|\delta_h f(t,x) - \nabla_x f(t,x)\cdot h - (\delta_hf(t,y)-\nabla_x f(t,y)\cdot h)\bigg| = \bigg|\left(\int\limits_0^1 \delta_{sh}\nabla_x f(t,x)- \delta_{sh} \nabla_x f(t,y)ds\right)\cdot h \bigg|\lesssim \frac{|x-y| \ |h|}{t},
\\ \bigg| \delta_hf(t,x) - \delta_hf(t,y)\bigg| = \bigg| \left(\int\limits_0^1 \nabla_x f(t,x+sh) - \nabla_x f(t,y+sh)ds\right)\cdot h\bigg|\lesssim \frac{|x-y|\ |h|}{t}.
\end{split}
\end{equation} 
Plugging these into \eqref{e:midhfirstbound} gives 
\begin{equation}
\begin{split}\label{e:timespaceregI2}
I_2 &\lesssim   \int\limits_{ |x-y|\leq |h|\leq t} \frac{\bigg|\delta_h f(x) - \nabla_x f(x)\cdot h - (\delta_hf(y)-\nabla_x f(y)\cdot h)\bigg| }{|h|^3}  +\frac{|h|}{|h|^{6}} \left[|\delta_h f(x)-\delta_hf(y)||h|^{4}\right] dh 
\\&\lesssim \frac{|x-y|}{t}\int\limits_{|x-y|\leq |h|\leq t}\frac{1}{|h|^{2}}dh = -\frac{|x-y|}{t}\log\left(\frac{|x-y|}{t}\right).
\end{split}
\end{equation}
Finally, to bound large $|h|\geq t$, we split our integral as 
\begin{equation}
\begin{split}\label{e:largehfirstbound}
I_3 &\leq \bigg|\int\limits_{|h|>t}  \frac{\delta_h f(x) - \delta_hf(y)}{(\delta_h f(y)^2+|h|^2)^{3/2}} + \frac{(\delta_hf(x) - \nabla_x f(x)\cdot h)\left[(\delta_hf(x)^2+|h|^2)^{3/2}-(\delta_hf(y)^2+|h|^2)^{3/2}\right]}{( \delta_hf(x)^2 + |h|^2)^{3/2}(\delta_hf(y)^2 + |h|^2)^{3/2}} dh\bigg|
\\&\qquad +|\nabla_x f(x)-\nabla_x f(y)| \ \bigg|\int\limits_{|h|>t} \frac{-h}{(\delta_hf(y)^2 +|h|^2)^{3/2}} dh \bigg|
\end{split}
\end{equation}
Similar to the mid $h$ bound, we can use our Lipschitz bounds on $f$ to get that 
\begin{equation}\label{e:largehfirstint}
\begin{split}
 \bigg|\int\limits_{|h|>t}  \frac{\delta_h f(x) - \delta_hf(y)}{(\delta_h f(y)^2+|h|^2)^{3/2}} &+ \frac{(\delta_hf(x) - \nabla_x f(x)\cdot h)\left[(\delta_hf(x)^2+|h|^2)^{3/2}-(\delta_hf(y)^2+|h|^2)^{3/2}\right]}{( \delta_hf(x)^2 + |h|^2)^{3/2}(\delta_hf(y)^2 + |h|^2)^{3/2}} dh\bigg|
 \\& \lesssim \int\limits_{|h|>t} \frac{|\delta_h f(x)-\delta_h f(y)|}{|h|^3} dh \lesssim \frac{|x-y|}{t}.
\end{split}
\end{equation}
For the second integral in \eqref{e:largehfirstbound}, we use our growth bounds on $f$ to get that
\begin{equation}
\begin{split}\label{e:largehsecondint}
\bigg|\int\limits_{|h|>t} & \frac{-h}{(\delta_hf(y)^2 +|h|^2)^{3/2}} dh \bigg| = \frac{1}{2}\bigg| \int\limits_{|h|>t}  \frac{-h}{(\delta_hf(y)^2 +|h|^2)^{3/2}}  +  \frac{h}{(\delta_{-h}f(y)^2 +|h|^2)^{3/2}} dh \bigg|
\\&\lesssim  \bigg| \int\limits_{|h|>t} \frac{|\delta_h f(y)- \delta_{-h}f(y)|}{|h|^3}dh \bigg| \lesssim \int\limits_{t}^\infty \frac{\Omega(R)}{R^2}dR.
\end{split}
\end{equation}
Plugging \eqref{e:largehfirstint} and \eqref{e:largehsecondint} into \eqref{e:largehfirstbound}, and using that $|\nabla_x f(t,x)-\nabla_xf(t,y)|\lesssim \displaystyle\frac{|x-y|}{t}$ gives 
\begin{equation}\label{e:timespaceregI3}
I_3 \lesssim \frac{|x-y|}{t}\left(1+\int\limits_t^\infty \frac{\Omega(R)}{R^2}dR.\right).
\end{equation}

Combining the various bounds \eqref{e:timespaceregI1}, \eqref{e:timespaceregI2}, \eqref{e:timespaceregI3} and using that $1\lesssim -\log\left(\displaystyle\frac{|x-y|}{t}\right)$ thus gives us 
\begin{equation}
|\partial_t f(t,x) -\partial_t f(t,y)|\lesssim \frac{|x-y|}{t}\left(-\log\left(\frac{|x-y|}{t}\right) + \int\limits_t^\infty \frac{\Omega(R)}{R^2}dR\right), \qquad |x-y|\leq \frac{t}{2}.
\end{equation}

For small times $0 < t\leq 1$, by taking $\Omega(R) = ||\nabla_x f||_{L^\infty}R$ for $R\leq 1$ we get that 
\begin{equation}
\begin{split}
|\partial_t f(t,x) -\partial_t f(t,y)|&\lesssim \frac{|x-y|}{t}\left(-\log\left(\frac{|x-y|}{t}\right) + \int\limits_t^\infty \frac{\Omega(R)}{R^2}dR\right)
\\&\lesssim  \frac{|x-y|}{t}\left(-\log\left(\frac{|x-y|}{t}\right) -\log(t)+ \int\limits_1^\infty \frac{\Omega(R)}{R^2}dR\right), \qquad |x-y|\leq \frac{t}{2}.
\end{split}
\end{equation}

\end{proof}

\begin{proposition}
Let $f:[0,T]\times \R^{2} \to \R$ be a classical solution to \eqref{e:fequation} with $||\nabla_x f_0||_{L^\infty}<\displaystyle\frac{1}{\sqrt{5}}$, and growth bounds 
\begin{equation}
\sup\limits_{x} \max\limits_{|h|\leq R} |f_0(x+h)-f_0(x)|\leq \Omega(R), \qquad \int\limits_{1}^\infty  \frac{\Omega(R)}{R^2}dR < \infty.
\end{equation} 
Without loss of generality, assume that $\Omega(R)$ is concave.  Then letting , $\overline{\Omega}(R) = \displaystyle\frac{\Omega(R)}{R}$, which is nonincreasing, we have that 
\begin{equation}
||\nabla_x f||_{L^\infty}(t)\leq 2\overline{\Omega}\left(
\frac{2||\nabla_x f||_{L^\infty}(t)}{||D^2_x f||_{L^\infty}(t)}\right).
\end{equation}
In particular, $\displaystyle\lim\limits_{t\to \infty} ||\nabla_x f||_{L^\infty}(t) = 0$ uniformly depending only on $\Omega, ||\nabla_x f_0||_{L^\infty}$, and dimension $d$.  In the case that $\Omega(R) = \Omega_0 R^{\alpha}$ for some $0\leq \alpha <1$, then 
\begin{equation}
||\nabla_x f||_{L^\infty}(t) \lesssim \frac{\Omega_0^{\frac{1}{2-\alpha}}}{t^{\frac{1-\alpha}{2-\alpha}}}.
\end{equation}
\end{proposition}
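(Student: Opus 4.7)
The plan is to combine the Lipschitz bound on $\nabla_x f$ coming from Corollary \ref{c:C11bound} with the propagated growth bound of Corollary \ref{c:growthbound} via a one-dimensional line integral argument. Abbreviate $M := ||\nabla_x f||_{L^\infty}(t)$ and $L := ||D^2_x f||_{L^\infty}(t)$. The idea is that if $\nabla_x f(t, x_0) \cdot e_0$ is close to $M$ at some base point $x_0$ and some unit direction $e_0$ (which is possible for any $\epsilon > 0$ by the definition of the supremum), then the Lipschitz bound forces $\nabla_x f(t, x_0 + r e_0) \cdot e_0 \geq (M - \epsilon) - L|r|$ for all $r$, so this inner product remains of order $M$ on an interval of length comparable to $M/L$.

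Concretely, I would set $s = (M-\epsilon)/L$ and integrate this lower bound over $r \in [-s, s]$, producing the value $(M-\epsilon)^2 / L$ exactly. The same integral is $f(t, x_0 + s e_0) - f(t, x_0 - s e_0)$, which is bounded from above by $\Omega(2s)$ via the propagated growth estimate of Corollary \ref{c:growthbound}. Dividing both sides by $2s$ gives $(M-\epsilon)/2 \leq \overline{\Omega}(2(M-\epsilon)/L)$, and sending $\epsilon \to 0$ using the continuity of $\overline{\Omega}$ (which follows from concavity of $\Omega$) yields the implicit inequality $M \leq 2\overline{\Omega}(2M/L)$.

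The qualitative decay $\lim\limits_{t\to\infty} M(t) = 0$ then follows from two observations: first, the integrability $\int_1^\infty \Omega(R)/R^2\, dR < \infty$ combined with the monotonicity of $\overline{\Omega}$ forces $\overline{\Omega}(R) \to 0$ as $R \to \infty$, since otherwise the integral would diverge logarithmically; second, by Corollary \ref{c:C11bound}, $L(t) \lesssim 1/t$, so $2M(t)/L(t) \gtrsim M(t)\, t$. If $M(t)$ failed to vanish along some sequence $t_n \to \infty$, then $2M(t_n)/L(t_n) \to \infty$, forcing $\overline{\Omega}$ of that argument to $0$, which contradicts the implicit inequality. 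For the quantitative power-law case $\Omega(R) = \Omega_0 R^\alpha$, $\overline{\Omega}(R) = \Omega_0 R^{\alpha-1}$, and substituting into the implicit inequality yields $M^{2-\alpha} \lesssim \Omega_0 L^{1-\alpha}$, which combined with $L \lesssim 1/t$ gives the stated rate.

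There is no real technical obstacle; the argument is purely mean-value-based. The only subtle point is choosing the correct length scale $s$: a one-sided integral over $[0, s]$ would produce only $M^2/(2L)$ and the weaker conclusion $M \leq 2\overline{\Omega}(M/L)$, so it is essential to use a symmetric interval around $x_0$ in order to extract the factor $2M/L$ inside $\overline{\Omega}$ that appears in the stated inequality.
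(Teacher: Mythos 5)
Your proposal is correct and is essentially the paper's own argument: the paper likewise bounds $\mathrm{osc}_{B_R(x)} f$ from below by $2|\nabla_x f(t,x)|R - \|D_x^2 f\|_{L^\infty}R^2$ with $R = |\nabla_x f(t,x)|/\|D^2_x f\|_{L^\infty}$ (your symmetric line integral, evaluated at the same length scale), compares with $\Omega(2R)$ from Corollary \ref{c:growthbound}, and rearranges. The only cosmetic difference is that you pass to a near-maximizing point and send $\epsilon \to 0$, whereas the paper derives the pointwise inequality and then takes the supremum in $x$; the decay arguments and the power-law computation are identical.
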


\begin{proof}
Again by Corollary \ref{c:growthbound} we have for all $t>0$ that 
\begin{equation}
\sup\limits_{x\in \R^{2}}\max\limits_{|h|\leq R} |f(t,x+h)-f(t,x)|\leq \Omega(R).
\end{equation}
Fix some time $t>0$ and $x\in \R^{2}$.  Without loss of generality, assume that $\nabla_x f(t,x)\not = 0$.  Taking
$R = R(t,x) = \displaystyle\frac{|\nabla_x f(t,x)|}{||D_x^2 f||_{L^\infty}(t)}$, we then have that 
\begin{equation}
\frac{|\nabla_x f(t,x)|^2}{||D^2_x f||_{L^\infty}(t)}  = |\nabla_x f(t,x)| R  = 2 |\nabla_x f(t,x)| R - ||D_x^2 f||_{L^\infty}(t)R^2 \leq \osc_{B_R(x)} f(t,\cdot) \leq \Omega(2R).
\end{equation}
Rearranging, we get 
\begin{equation}
|\nabla_x f(t,x)| \leq 2\frac{\Omega(2R)}{2R} = 2\overline{\Omega}\left(
\frac{2|\nabla_x f(t,x)|}{||D^2_x f||_{L^\infty}(t)}\right).
\end{equation}
Taking the supremum in $x\in \R^{2}$, we thus have that 
\begin{equation}\label{e:slopeupperbound}
||\nabla_x f||_{L^\infty}(t)\leq 2\overline{\Omega}\left(
\frac{2||\nabla_x f||_{L^\infty}(t)}{||D^2_x f||_{L^\infty}(t)}\right).
\end{equation}
Note that since $\Omega:[0,\infty)\to [0,\infty)$ is concave with $\displaystyle\int\limits_1^\infty \frac{\Omega(R)}{R^2}dR <\infty$, it follows that $R \to \overline{\Omega}(R)$ is nonincreasing with $\displaystyle\lim\limits_{R\to \infty} \overline{\Omega}(R) = 0$.  Thus \eqref{e:slopeupperbound} implicitly gives an upperbound on the slope $||\nabla_x f||_{L^\infty}(t)$ in terms of $\Omega$ and $||D_x^2 f||_{L^\infty}(t)$.  As $\displaystyle\lim\limits_{t\to \infty} ||D^2_x f||_{L^\infty}(t) = 0$ with a rate only depending on $||\nabla_x f_0||_{L^\infty}$ and dimension, and $\overline{\Omega}(R) \to 0$ as $R\to \infty$ with a rate depending only on $\Omega$, we thus have that 
\begin{equation}
\lim\limits_{t\to \infty} ||\nabla_x f||_{L^\infty} (t) = 0,
\end{equation}
with a rate depending only on those quantities as well.  

Finally, in the case that $\Omega(R) = \Omega(R) = \Omega_0 R^\alpha$ for some $0\leq \alpha<1$, then using that $||D^2_x f||_{L^\infty}(t) \lesssim \displaystyle\frac{1}{t}$ and rearranging \eqref{e:slopeupperbound} gives 
\begin{equation}
||\nabla_x f||_{L^\infty}(t) \lesssim \frac{\Omega_0^{\frac{1}{2-\alpha}}}{t^{\frac{1-\alpha}{2-\alpha}}}.
\end{equation}
\end{proof}

\begin{proposition}\label{p:C1alphabound}
Let $f:[0,T]\times \R^{2} \to \R$ be a classical solution to \eqref{e:fequation} with $||\nabla_x f_0||_{L^\infty}<\displaystyle\frac{1}{\sqrt{5}}$, and growth bounds 
\begin{equation}
\sup\limits_{x} \max\limits_{|h|\leq R} |f_0(x+h)-f_0(x)|\leq \Omega(R), \qquad \int\limits_{1}^\infty  \frac{\Omega(R)}{R^2}dR < \infty.
\end{equation} 
Then $\nabla_x f \in C^{\alpha}_{loc}((0,T]\times \R^{2}; \R^{2})$ and $\partial_t f\in C^{\alpha}_{loc}((0,T]\times \R^{2})$ with
\begin{equation}
||\nabla_x f||_{C^{\alpha}(Q_{t/4}(t,x))}, ||\partial_t f||_{C^{\alpha}(Q_{t/4}(t,x))}\leq C(||\nabla_x f_0||_{L^\infty}, \Omega, ||D^2_x f ||_{L^\infty((t/2,3t/2)\times \R^{2})}) \max\{t^{-\alpha},1\},
\end{equation}
where $Q_r(s,y) = (s-r,s]\times B_r(y)$, and $\alpha>0$ depends only on $||\nabla_x f_0||_{L^\infty}$ and dimension.
\end{proposition}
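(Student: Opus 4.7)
By the scale invariance of \eqref{e:fequation} under $f \mapsto R^{-1} f(R\,\cdot, R\,\cdot)$, it suffices to prove the estimate on the unit parabolic cylinder $Q_{1/4}(1,0)$ under the assumptions $||\nabla_x f_0||_{L^\infty} < 1/\sqrt{5}$ and $||D_x^2 f||_{L^\infty((1/2,3/2)\times \R^2)} \leq M$; the estimate on $Q_{t/4}(t,x_0)$ then follows with $R = t/2$, and the parabolic scaling $\partial_s = R\partial_t$, $\nabla_y = R\nabla_x$ produces the factor $\max\{t^{-\alpha},1\}$ when undoing the rescaling. After this reduction the proof has two parts: establishing $C^\alpha$ regularity for $\nabla_x f$, then bootstrapping to $C^\alpha$ regularity for $\partial_t f$.

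For the first part, fix $e \in S^1$ and consider $f_e = \nabla_x f \cdot e$, which solves the linear nonlocal parabolic equation \eqref{e:feequation}. By Proposition \ref{p:maximumprinciple} the kernel $K$ is uniformly elliptic of order one, $\lambda|h|^{-3} \leq K \leq \Lambda|h|^{-3}$, with constants depending only on $||\nabla_x f_0||_{L^\infty}$. I would rewrite this equation in the L\'evy--Khintchine form
\begin{equation*}
\partial_t f_e(t,x) = \int_{\R^2} \bigl[f_e(t,x+h) - f_e(t,x) - \nabla_x f_e(t,x)\cdot h\,\1_{\{|h|\leq 1\}}\bigr] K(t,x,h) \,dh + c(t,x)\cdot \nabla_x f_e(t,x),
\end{equation*}
where $c(t,x)$ collects the drift from \eqref{e:feequation} together with the truncation correction for $K$; using the asymmetry estimate of Lemma \ref{l:asymmetry} on short scales and the growth bound $\Omega$ on long scales, $c$ is bounded in $L^\infty(Q_1(1,0))$ in terms of $M$ and $\Omega$. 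I then invoke a Silvestre-type $C^\alpha$ interior regularity theorem for parabolic integro-differential equations of order one with bounded measurable coefficients and drift, obtaining $||f_e||_{C^\alpha(Q_{1/2}(1,0))} \leq C$ for an exponent $\alpha > 0$ depending only on the ellipticity constants.

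For the second part, I revisit \eqref{e:fequation2} and split the integral at $|h|=1$. On $|h|\leq 1$ the numerator $\delta_h f - \nabla_x f\cdot h$ is pointwise $O(M|h|^2)$, and the joint $C^\alpha$ modulus of $(f, \nabla_x f)$ from the first part transfers through the composition to give an integrable $C^\alpha(Q_{1/2}(1,0))$-bounded contribution. On $|h|>1$ the asymmetry cancellations of Lemma \ref{l:asymmetry} together with the growth bound $\Omega$ and the $C^\alpha$ modulus of $\nabla_x f$ yield a $C^\alpha$ estimate, essentially as in the proof of Proposition \ref{p:regularityspace}. The main obstacle is the first part: both the drift in \eqref{e:feequation} and the antisymmetric part of $K$ are critical relative to the order-one diffusion, so assembling them into a bounded drift $c(t,x)$ after an appropriate symmetric reformulation of the equation is the most delicate calculation; once this is done, applying the regularity theorem and transferring the resulting H\"older modulus to $\partial_t f$ are essentially mechanical.
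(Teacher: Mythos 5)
Your proposal is correct and follows essentially the same route as the paper: view $f_e$ as a solution of the uniformly elliptic order-one parabolic integro-differential equation \eqref{e:feequation}, use the asymmetry bounds of Lemma \ref{l:asymmetry} (with $\omega(r)\lesssim \min\{r\,||D^2_xf||_{L^\infty},1\}$) together with the growth bound $\Omega$ to control the drift and the antisymmetric part of $K$, apply the Silvestre-type H\"older estimate after parabolic rescaling to get the $\max\{t^{-\alpha},1\}$ factor, and then read off the $C^\alpha$ bound for $\partial_t f$ from the equation. The only cosmetic difference is that the paper symmetrizes $K$ and absorbs the drift and antisymmetric contributions into a bounded \emph{source} term $F(t,x)$ (using that $\nabla_x f_e$ is bounded by $||D^2_xf||_{L^\infty}$), whereas you package them as a bounded \emph{drift} $c(t,x)\cdot\nabla_x f_e$, which requires invoking the critical ($\sigma=1$) drift version of the regularity theorem rather than the bounded-right-hand-side version; both are available and the estimates needed to justify either reduction are identical.
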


\begin{proof}

Let $e\in S^{1}$ be arbitrary.  Then we have that $f_e$ satisfies the equation 
\begin{equation}
\partial_t f_e (t,x) =  \nabla_x f_{e}(t,x)\cdot  \int\limits_{\R^{2}}\frac{-h}{(\delta_hf(t,x)^2+|h|^2)^{3/2}} dh + \int\limits_{\R^{2}} \delta_hf_e(t,x) K(t,x,h) dh,
\end{equation}
where $K$ as defined in \eqref{e:Kdefn}.  Since $||\nabla_x f_0||_{L^\infty}<\displaystyle\frac{1}{\sqrt{5}}$, it follows by proposition \ref{p:maximumprinciple} that $K$ is uniformly elliptic with ellipticity constants depending only on $||\nabla_x f_0||_{L^\infty}$.  Rewriting this equation slightly, we have that 
\begin{equation}\label{e:gequationsymmetric}
\begin{split}
\partial_t f_e - \int\limits_{\R^{2}} \delta_h f_e(t,x) \left(\frac{K(t,x,h)+K(t,x,-h)}{2}\right) dh &=  \nabla_x f_{e}(t,x) \cdot \int\limits_{\R^{2}}\frac{-h}{(\delta_hf(t,x)^2+|h|^2)^{3/2}} dh
\\&\qquad+ \int\limits_{\R^{2}} \delta_h f_e(t,x) \left(\frac{K(t,x,h)-K(t,x,-h)}{2}\right) dh.
\end{split}
\end{equation}

Let $F(t,x)$ denote the righthand side of \eqref{e:gequationsymmetric}.  Then we claim that $|F(t,\cdot)|$ is bounded uniformly in terms of $||D_x^2 f||_{L^\infty}(t)$ and the growth rate $\Omega$.  To see this, note that we can bound the drift term 
\begin{equation}\label{e:driftLinftybound}
\begin{split}
\bigg|\int\limits_{\R^{2}}\frac{-h}{(\delta_hf(t,x)^2+|h|^2)^{3/2}} dh \bigg|&= \bigg|\frac{1}{2}\int\limits_{\R^{2}}  h \frac{(\delta_hf(t,x)^2+|h|^2)^{3/2} - (\delta_{-h}f(t,x)^2+|h|^2)^{3/2}}{(\delta_hf(t,x)^2 +|h|^2)^{3/2}(\delta_{-h}f(t,x)^2 +|h|^2)^{3/2}} dh\bigg|
\\&\lesssim \int\limits_{\R^{d-1}} \frac{|\delta_h f(t,x)+\delta_{-h}f(t,x)|}{|h|^3}dh 
\\&\lesssim \int\limits_{|h|\leq 1} \frac{||D^2_x f||_{L^\infty}(t)}{|h|}dh + \int\limits_{|h|\geq 1} \frac{\Omega(|h|)}{|h|^3}dh 
\\&\lesssim ||D^2_x f||_{L^\infty}(t) + \int\limits_1^\infty \frac{\Omega(R)}{R^2}dR.
\end{split}
\end{equation}
Similarly, the asymmetry bounds on $K$ proven in Lemma \ref{l:asymmetry} combined with the ellipticity bounds give us 
\begin{equation}\label{e:asymetricLinftybound}
\begin{split}
\bigg|\int\limits_{\R^{2}} \delta_h f_e(t,x) \left(\frac{K(t,x,h)-K(t,x,-h)}{2}\right) dh \bigg| &\leq  \bigg|\int\limits_{|h|\leq 1} \delta_h f_e(t,x) \left(\frac{K(t,x,h)-K(t,x,-h)}{2}\right) dh\bigg| 
\\& \ + \bigg|\int\limits_{|h|\geq 1} \delta_h f_e(t,x) \left(\frac{K(t,x,h)-K(t,x,-h)}{2}\right) dh\bigg|
\\&\lesssim \int\limits_{|h|\leq 1} \frac{||D_x^2 f||_{L^\infty}(t)^2 }{|h|} dh  + \int\limits_{|h|\geq 1} \frac{1}{|h|^3} dh 
\\& \lesssim ||D^2_x f||_{L^\infty}(t)^2 + 1.
\end{split}
\end{equation}
Combining \eqref{e:driftLinftybound} and \eqref{e:asymetricLinftybound}, we thus have that our directional derivative $f_e$ solves the equation 
\begin{equation}
\begin{split}
\partial_t f_e(t,x) - \int\limits_{\R^{2}} \delta_h f_e(t,x) \left(\frac{K(t,x,h)+K(t,x,-h)}{2}\right) dh = F(t,x), \qquad (t,x)\in (0,T]\times \R^{2}, 
\\ \frac{\lambda}{|h|^d} \leq \frac{K(t,x,h)-K(t,x,-h)}{2}\leq \frac{\Lambda}{|h|^3}, \qquad |F(t,x)|\lesssim 1+ ||D^2_x f||_{L^\infty}^2(t) + \int\limits_1^\infty \frac{\Omega(R)}{R^2}dR.
\end{split}
\end{equation}

Thus our directional derivative $f_e$ solves a parabolic equation with uniformly elliptic, symmetric kernel $\displaystyle\frac{K(t,x,h)+K(t,x,-h)}{2}$ and right hand side $F\in L^\infty([\epsilon, T]\times \R^{2})$ for positive $\epsilon>0$.  Rescaling and applying the results of \cite{SilvestreHolder}, we thus have that there is an $\alpha>0$ depending only on the ellipticity constants and dimension such that for any $(t,x)\in (0,T]\times \R^{2}$, 
\begin{equation}
 || f_e ||_{C^{\alpha}(Q_{t/4}(t,x))}\leq C \max\{t^{-\alpha},1\}(||f_e||_{L^{\infty}(Q_{t/2}(t,x))} + ||F||_{L^\infty(Q_{t/2}(t,x))}),
\end{equation}
where $Q_r(s,y) = (s-r,s]\times B_r(y)$.  As $e\in S^{1}$ was arbitrary, we've thus proven the $C^{\alpha}$ bound for $\nabla_x f$.  

To get the $C^\alpha$ estimate for $\partial_t f$, we simply rely on the equation our original equation
\begin{equation}
\partial_t f(t,x)  = \int\limits_{\R^{2}} \frac{\delta_h f(t,x) - \nabla_x f(t,x)\cdot h}{(\delta_h f(t,x)^2+|h^2)^{3/2}}dh.
\end{equation}
Doing similar bounds as in Proposition \ref{p:regularityspace}, control over $||D^2_x f||_{L^\infty}$, $\Omega$, and a $C^\alpha$ estimate for $\nabla_x f$ that is uniform in space similarly gives a $C^\alpha$ estimate for $\partial_t f$.

\end{proof}

%%%%%%%%%%%%%%%%%%%%%%%%%%%%%%%%%%%%%%%%%%%%%%%%%%%%%%%%%%%%%%%%%%%%%%

\section{Existence of solutions for all time}\label{s:globalexistence}

So far, our results and a priori estimates have been for sufficiently regular, classical solutions $f:[0,T]\times \R^{2}\to \R$ to the Muskat equation \eqref{e:fequation}.  While we have formally only derived these for smooth solutions, the estimates only depend quantitatively on $||\nabla_x f_0||_{L^\infty}<\displaystyle\frac{1}{\sqrt{5}}$ and integrably sublinear growth bounds on $f_0$, i.e. 
\begin{equation}
\sup\limits_{x\in \R^{2}}\max\limits_{|h|\leq R} |f_0(x+h)-f_0(x)|\leq \Omega(R), \qquad \int\limits_1^\infty \frac{\Omega(R)}{R^2}dR < \infty.
\end{equation} 

Notably, none of these estimates depending on the time of existence $T$.  We shall now show that under these assumptions on the initial data $f_0$, there exists a unique classical solution $f:[0,\infty)\times \R^{2}\to \R$ to \eqref{e:fequation} with initial data $f_0$.  

To prove this, we first show that the same is true for solutions to an $\epsilon$-viscious Muskat equation, 
\begin{equation}\label{e:epsilonsystem}
\partial_t f(t,x) - \epsilon \Delta_x f(t,x) = \int\limits_{\R^{2}}\frac{\delta_h f(t,x) - \nabla_x f(t,x)\cdot h}{(\delta_h f(t,x)^2+|h|^2)^{3/2}}dh. 
\end{equation}

\begin{lemma}\label{l:epsilonaprioriestimates}
Let $f: [0,T]\times \R^{2}\to \R$ be a sufficiently regular solution of the $\epsilon$-viscous Muskat equation \eqref{e:epsilonsystem} with initial data $f_0$ satisfying $||\nabla_x f_0||_{L^\infty}<\displaystyle\frac{1}{\sqrt{5}}$ and growth bounds 
\begin{equation}
\sup\limits_{x\in \R^{2}}\max\limits_{|h|\leq R} |f_0(x+h)-f_0(x)|\leq \Omega(R), \qquad \int\limits_1^\infty \frac{\Omega(R)}{R^2}dR < \infty.
\end{equation} 
Then solutions to the $\epsilon$-system satisfy the comparison principle and hence the growth bounds of Corollary \ref{c:growthbound},
\begin{equation}
\sup\limits_{x\in \R^2} f(t,x+h)-f(t,x) \leq \sup\limits_{x\in \R^2} f_0(x+h)-f_0(x), \qquad \forall h\in \R^2,
\end{equation}
as well as the a priori estimates of Corollary \ref{c:C11bound}, Proposition \ref{p:regularityspace}, and Proposition \ref{p:C1alphabound}
\begin{equation}
\begin{split}
||D_x^2 f||_{L^\infty}(t)\lesssim \min\left\{||D_x^2 f_0||_{L^\infty}, \frac{1}{t}\right\},
\\
||\partial_t f - \epsilon \Delta_x f||_{L^\infty}(t)\lesssim -\log(t), \qquad t<\frac{1}{2},
\\
||\nabla_x f||_{C^{\alpha}(Q_{t/4}(t,x))}, ||\partial_t f||_{C^{\alpha}(Q_{t/4}(t,x))}\leq C(||\nabla_x f_0||_{L^\infty}, \Omega,t).  
\end{split}
\end{equation}
with constants independent of $\epsilon>0$.  
\end{lemma}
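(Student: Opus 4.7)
The strategy is to revisit each a priori estimate from Sections \ref{s:maxprinciple}--\ref{s:regularitytime} and verify that the additional term $-\eps\Delta_x f$ in \eqref{e:epsilonsystem} is compatible with every argument and never contributes with an adverse sign, so that the previously established bounds transfer verbatim with constants independent of $\eps$.

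\textbf{Comparison principle and slope maximum principle.} Differentiating \eqref{e:epsilonsystem} in the direction $e\in S^1$ shows that $f_e$ solves
\begin{equation*}
\partial_t f_e - \eps\Delta_x f_e = \int_{\R^2}\frac{-\nabla f_e\cdot h}{(\delta_h f^2+|h|^2)^{3/2}}dh + \int_{\R^2}\delta_h f_e\, K(t,x,h)\,dh,
\end{equation*}
with $K$ as in \eqref{e:Kdefn}. Since the classical Laplacian respects maximum principles, the proof of Proposition \ref{p:maximumprinciple} goes through unchanged, yielding $\|\nabla_x f\|_{L^\infty}(t)\leq \|\nabla_x f_0\|_{L^\infty}$ and the uniform ellipticity of $K$. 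For the comparison principle, replay the argument in Theorem \ref{t:comparison2}: at a putative first crossing point $(t_0,x_0)$ where $f$ touches $g_\eps$ from below, one has $\Delta_x(f-g_\eps)(t_0,x_0)\leq 0$ in addition to the conditions in \eqref{e:crossingpointbounds}, so
\begin{equation*}
\partial_t f - \eps\Delta_x f \;\geq\; \partial_t g_\eps - \eps\Delta_x g_\eps \quad\text{at }(t_0,x_0),
\end{equation*}
and Lemma \ref{l:comparison} then forces a contradiction just as before. The growth bounds follow as in Corollary \ref{c:growthbound}.

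\textbf{Generation of the modulus and the $C^{1,1}$ estimate.} The breakthrough argument of Section \ref{s:breakthrough} reduces everything to showing \eqref{e:finalinequality} at a would-be first touching point $(t_0,\xi/2,-\xi/2)$ where \eqref{e:omegabound} holds. For the $\eps$-viscous equation, the time derivative in \eqref{e:timederivboundproof} acquires the extra contribution $\eps\bigl(\Delta_x f_e(\xi/2)-\Delta_x f_e(-\xi/2)\bigr)$. But the crossing assumption gives $\delta_\xi f_e(t_0,x)\leq\omega(|\xi|)$ for every $x\in\R^2$, with equality at $x=-\xi/2$, so the smooth function
\begin{equation*}
x\longmapsto f_e(t_0,x+\xi/2)-f_e(t_0,x-\xi/2)
\end{equation*}
attains its global maximum at $x=0$. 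Consequently its Laplacian at $0$ is non-positive, i.e.
\begin{equation*}
\Delta_x f_e(t_0,\xi/2)-\Delta_x f_e(t_0,-\xi/2)\leq 0,
\end{equation*}
so the viscous term only strengthens the bound in Lemma \ref{l:timederivbound}. Lemmas \ref{l:omegacalc}--\ref{l:generationrho} then apply without modification, and we recover $\|D_x^2 f\|_{L^\infty}(t)\lesssim \min\{\|D_x^2 f_0\|_{L^\infty},1/t\}$ with constants depending only on $\|\nabla_x f_0\|_{L^\infty}$.

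\textbf{Bound on $\partial_t f-\eps\Delta_x f$ and $C^\alpha$ regularity.} By \eqref{e:epsilonsystem}, $\partial_t f-\eps\Delta_x f$ \emph{equals} the Muskat integral, so the $L^\infty$ bound of Proposition \ref{p:regularityspace} applies directly using only the uniform $C^{1,1}$ estimate and the growth bound $\Omega$, both $\eps$-independent. For the Hölder estimates, symmetrize the $f_e$ equation exactly as in \eqref{e:gequationsymmetric} to obtain
\begin{equation*}
\partial_t f_e - \eps\Delta_x f_e - \int_{\R^2}\delta_h f_e(t,x)\,\tfrac{K(t,x,h)+K(t,x,-h)}{2}\,dh = F(t,x),
\end{equation*}
where $\|F\|_{L^\infty}$ is controlled via the asymmetry and growth bounds of Lemmas \ref{l:asymmetry}--\ref{l:continuity} and Corollary \ref{c:C11bound}, uniformly in $\eps$. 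The operator on the left is the sum of a uniformly elliptic nonlocal operator and a nonnegative local diffusion; its ellipticity is therefore no worse than the nonlocal piece alone. Silvestre's Hölder estimates for parabolic integro-differential operators (as used in Proposition \ref{p:C1alphabound}) then yield a $C^\alpha$ bound on $f_e$ with $\alpha$ and constants depending only on the nonlocal ellipticity constants, dimension, and $\|F\|_{L^\infty}$. Feeding this back into the Muskat integral and using $\partial_t f = \eps\Delta_x f + (\text{Muskat integral})$ would produce an $\eps$-dependent contribution through $\Delta_x f$; the cleanest route is to show directly that the right-hand side of \eqref{e:epsilonsystem} is Hölder in $(t,x)$ by repeating the calculation in Proposition \ref{p:C1alphabound}, and then use that equality to conclude. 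The main obstacle is precisely this last point: ensuring that the $C^\alpha$ regularity statements one cites handle mixed local/nonlocal operators uniformly as $\eps\to 0^+$, which amounts to observing that adding a nonnegative local diffusion cannot degrade the nonlocal Hölder estimate.
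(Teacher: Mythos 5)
Your proposal is correct and supplies exactly the verification the paper omits: Lemma \ref{l:epsilonaprioriestimates} is stated without proof, the implicit claim being that the viscous term never contributes with an adverse sign in any of the maximum-principle arguments, which is precisely what you check at the relevant touching points (for the comparison principle, for the slope maximum principle, and for the modulus-of-continuity breakthrough, where the observation that $x\mapsto f_e(t_0,x+\xi/2)-f_e(t_0,x-\xi/2)$ is maximized at $x=0$ is the key step). The one issue you flag --- uniformity in $\eps$ of the H\"older estimate for the mixed local/nonlocal operator --- is genuine but standard: the interior estimates invoked in Proposition \ref{p:C1alphabound} hold for any operator lying between the extremal operators of a class that contains the uniformly elliptic order-one kernels together with an arbitrary added nonnegative diffusion, so the constants depend only on the nonlocal lower ellipticity bound and are independent of $\eps$.
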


\begin{lemma}\label{l:epsilonglobalexistence}
Let $f_0\in C^{\infty}_c(\R^{2})$ be such that $||\nabla_x f_0||_{L^\infty}<\displaystyle\frac{1}{\sqrt{5}}$ and  
\begin{equation}
\sup\limits_{x\in \R^{2}}\max\limits_{|h|\leq R} |f_0(x+h)-f_0(x)|\leq \Omega(R), \qquad \int\limits_1^\infty \frac{\Omega(R)}{R^2}dR < \infty.
\end{equation} 
Then for any $\epsilon>0$, there exists a unique solution $f:[0,\infty)\times \R^{2}\to \R$ to the $\epsilon$-viscous Muskat equation \eqref{e:epsilonsystem}.  
\end{lemma}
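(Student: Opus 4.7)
The plan is the standard continuation scheme: prove short-time existence in a high-regularity class by Picard iteration, then use the $\epsilon$-uniform (and in particular $T$-uniform) a priori estimates of Lemma \ref{l:epsilonaprioriestimates} together with the $C^{2,\delta}$ continuation criterion of \cite{Localwellpose} to preclude blowup.

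First, since $f_0\in C^\infty_c(\R^2)\subseteq H^k(\R^2)$ for every $k$, I would apply a contraction argument to the mild formulation
\begin{equation}
f(t) = e^{t\epsilon\Delta}f_0 + \int_0^t e^{(t-s)\epsilon\Delta} N[f(s)]\,ds,
\end{equation}
where $N[f]$ is the Muskat integrand on the right hand side of \eqref{e:epsilonsystem}. The smoothing of the viscous semigroup $e^{t\epsilon \Delta}$ together with the local Lipschitz bound on $N$ in $H^k$ (for $k$ large, say $k\geq 4$ to match the framework of \cite{Localwellpose}) yields a unique classical solution $f\in C([0,T_\epsilon); H^k)\cap C^\infty((0,T_\epsilon)\times \R^2)$ on some short interval, with $T_\epsilon>0$ possibly shrinking with $\epsilon$. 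Uniqueness in the class of classical solutions is immediate from the comparison principle asserted in Lemma \ref{l:epsilonaprioriestimates}, applied symmetrically to any two such solutions.

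Now let $T^*\in (0,\infty]$ denote the maximal time of existence. On any $[\tau,T]\subset (0,T^*)$, Lemma \ref{l:epsilonaprioriestimates} supplies the $T$-uniform bounds $\|\nabla_x f\|_{L^\infty}\leq \|\nabla_x f_0\|_{L^\infty}<1/\sqrt 5$, $\|D_x^2 f\|_{L^\infty}\lesssim 1/\tau$, the growth bound $\Omega$, and a $C^\alpha$ bound on $\nabla_x f$ depending only on $\tau$, $\Omega$, and $\|\nabla_x f_0\|_{L^\infty}$. Rewriting \eqref{e:epsilonsystem} as the inhomogeneous heat equation
\begin{equation}
\partial_t f - \epsilon\Delta f = N[f],
\end{equation}
these estimates (using the $C^{1,1}$ spatial bound to tame the principal value near the diagonal and the growth bound $\Omega$ to handle the tail) place $N[f]$ in $L^\infty_t C^\alpha_x$ on $[\tau,T]$ with constants independent of $T<T^*$. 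Parabolic Schauder theory for $\partial_t-\epsilon\Delta$ then upgrades $f$ to $C^{1+\alpha/2,\,2+\alpha}_{t,x}$ on $[\tau,T]$, and differentiating the equation and iterating promotes $f$ to $C^\infty$ in space, again with constants independent of $T<T^*$.

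The main obstacle is precisely this bootstrap: one must check that the nonlocal term $N[f]$ really does inherit $C^\alpha$ regularity from the a priori bounds on $\nabla_x f$ and $\Omega$, with constants that do not blow up as $T\to T^*$; this splits the integral at an intermediate scale and controls the near-field by $\|D_x^2 f\|_{L^\infty}\lesssim 1/\tau$ and the far-field by $\int_1^\infty \Omega(R)/R^2\,dR<\infty$. Once the $C^{2,\delta}$ norm of $f(t,\cdot)$ is bounded uniformly as $t\to T^*$, the continuation criterion of \cite{Localwellpose} (which applies equally well to the $\epsilon$-regularized system, since adding $\epsilon\Delta$ only improves parabolic regularity) forces $T^*=\infty$, giving the global solution.
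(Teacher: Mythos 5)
Your proposal is correct and follows essentially the same route as the paper: local existence in a high-regularity class, the $T$-independent a priori estimates of Lemma \ref{l:epsilonaprioriestimates} to place the nonlocal term in $C^\alpha$, parabolic Schauder theory for $\partial_t - \epsilon\Delta$ to bound $\|f\|_{C^{2,\alpha}}$ (allowing $\epsilon$-dependence, since one only needs finiteness), and the continuation criterion of \cite{Localwellpose} to conclude $T^*=\infty$. The only cosmetic difference is that you build the local solution by Picard iteration on the mild formulation, whereas the paper simply observes that the $H^k$ local wellposedness and $C^{2,\beta}$ continuation criterion of \cite{Localwellpose}, being based on $L^2$ energy estimates, carry over verbatim to the $\epsilon$-viscous system.
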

\begin{proof}
From \cite{Localwellpose}, we have that the Muskat equation \eqref{e:fequation} is locally wellposed in $H^k(\R^{2})$ for $k\geq 4$.  Furthermore, they establish the continuation criteria that if $T = T(f_0,k)$ is the maximal time of existence for a solution $f(t,\cdot)\in H^k(\R^{2})$, then
\begin{equation}
T(f_0,k)<\infty \qquad \Rightarrow \qquad \lim\limits_{t\to T-} ||f||_{C^{2,\beta}(\R^{2})}(t) = \infty,
\end{equation}
for any $\beta>0$.  Their proof is all based on $L^2$-energy estimates for the Muskat equation.  In particular, all of these results still hold for the $\epsilon$-viscous Muskat equation.  

Thus for given $f_0\in C^\infty_c(\R^{2})$, we have that there is some maximal time $T = T(f_0, k)>0$ such that there exists a smooth solution $f: [0,T)\times \R^{2}\to \R$ to the $\epsilon$-viscous Muskat equation.  The goal of the lemma then is to show in fact $T = \infty$.  

By Lemma \ref{l:epsilonaprioriestimates}, we have that for any time $t>0$ and $x\in \R^{2}$ that 
\begin{equation}
||f||_{C^{1,\alpha}(Q_{t/4}(t,x))} \leq C(t, f_0)<\infty,
\end{equation}
for some $\alpha>0$ depending only on $||\nabla_x f_0||_{L^\infty}$.  The bound $C(t,f_0)$ can be taken to be nonincreasing in time $t>0$.  

Now consider the $\epsilon$-Muskat equation.  We have for any $0<\delta < T(f_0)$ that 
\begin{equation}
\partial_t f(t,x) - \epsilon \Delta_x f(t,x) = \left(\int\limits_{\R^{2}} \frac{\delta_h f(t,x) - \nabla_x f(t,x)\cdot h}{(\delta_h f(t,x)^2+|h|^2)^{3/2}} dh \right)\in C^{\alpha}([\delta, T)\times \R^{2}), 
\end{equation}
with $C^\alpha$ norm depending only on $\delta$ and $f_0$.  Thus $f$ solves the heat equation with a uniformly bounded $C^\alpha$ source term, so we get that 
\begin{equation}
||f||_{C^{2, \alpha}} \leq C(f_0, \delta, \alpha, \epsilon)<\infty.
\end{equation}
Though this $C^{2,\alpha}$ bound does depend on $\epsilon>0$, what is important is that the $C^{2,\alpha}$ norm of $f$ cannot blow up.  Thus by the continuation criteria, we have that in fact $T = T(f_0) = \infty$ proving the lemma.  
\end{proof}

\begin{lemma}\label{l:smoothglobalexistence}
Let $f_0\in C^{\infty}_c(\R^{2})$ satisfy the assumptions of Lemma \ref{l:epsilonglobalexistence}. Then there exists a global classical solution $f\in C([0,\infty)\times \R^2)\cap C^{1,\alpha}_{loc}((0,\infty)\times \R^2)\cap L^\infty ([0,\infty); C^{1,1}(\R^2))$ to the Muskat equation \eqref{e:fequation}.  
\end{lemma}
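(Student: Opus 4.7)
The approach is the vanishing viscosity method. For each $\epsilon \in (0,1]$, Lemma \ref{l:epsilonglobalexistence} supplies a global smooth solution $f^\epsilon:[0,\infty)\times\R^2\to\R$ of the viscous system \eqref{e:epsilonsystem} with initial data $f_0$, and Lemma \ref{l:epsilonaprioriestimates} provides $\epsilon$-uniform control: the slope maximum principle $||\nabla_x f^\epsilon||_{L^\infty}(t) \leq ||\nabla_x f_0||_{L^\infty}$, propagation of the growth modulus $\Omega$, the $C^{1,1}$ bound $||D_x^2 f^\epsilon||_{L^\infty}(t) \lesssim \min\{||D_x^2 f_0||_{L^\infty}, 1/t\}$, the log-Lipschitz time estimate $||\partial_t f^\epsilon - \epsilon\Delta_x f^\epsilon||_{L^\infty}(t) \lesssim -\log t$ on $(0,1/2]$, and the parabolic $C^\alpha$ regularity of Proposition \ref{p:C1alphabound}.

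First I would extract a limit. Since $f_0 \in C_c^\infty(\R^2)$ one has $\epsilon\, ||\Delta_x f^\epsilon||_{L^\infty}(t) \leq \epsilon\, ||D_x^2 f_0||_{L^\infty}$ uniformly in $\epsilon \leq 1$, so combining with the log-Lipschitz bound yields $||\partial_t f^\epsilon||_{L^\infty}(t) \lesssim 1 - \log t$ uniformly on $(0,1/2]$. Together with the $\epsilon$-uniform $L^\infty$ bound on $f^\epsilon$ (compact support of $f_0$ plus slope control) this gives equicontinuity of $\{f^\epsilon\}$ on every compact subset of $[0,\infty)\times\R^2$, including down to $t=0$; the parabolic $C^\alpha$ estimate of Proposition \ref{p:C1alphabound} upgrades this to equicontinuity of $\{\nabla_x f^\epsilon\}$ on every compact subset of $(0,\infty)\times\R^2$. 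An Arzela-Ascoli and diagonal argument then produces a sequence $\epsilon_n \to 0$ and a function $f \in C([0,\infty)\times\R^2) \cap C^{1,\alpha}_{loc}((0,\infty)\times\R^2)$ with $f^{\epsilon_n} \to f$ locally uniformly on $[0,\infty)\times\R^2$, $\nabla_x f^{\epsilon_n} \to \nabla_x f$ locally uniformly on $(0,\infty)\times\R^2$, and $f(0,\cdot) = f_0$. The limit inherits the slope bound $||\nabla_x f||_{L^\infty}< 1/\sqrt{5}$, the growth modulus $\Omega$, and the uniform $C^{1,1}$ bound $||D_x^2 f||_{L^\infty}(t) \leq ||D_x^2 f_0||_{L^\infty}$, placing $f$ in $L^\infty([0,\infty); C^{1,1}(\R^2))$.

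The main technical step is verifying that $f$ is a classical solution of \eqref{e:fequation}. Fix $(t,x)\in(0,\infty)\times\R^2$. First, $\epsilon_n\,||\Delta_x f^{\epsilon_n}||_{L^\infty}(t)\lesssim \epsilon_n/t \to 0$, so the viscous term vanishes in the limit. For the nonlocal term I would apply dominated convergence after symmetrizing around $h\mapsto -h$ exactly as in \eqref{e:timederivativebound}: on $|h|\leq 1$ the symmetrized numerator is $O(||D_x^2 f^{\epsilon_n}||_{L^\infty}(t)|h|^2) = O(|h|^2/t)$, giving an $\epsilon$-uniform dominating function $C(t)|h|^{-1}$; on $|h|\geq 1$ the symmetrization together with the pointwise bound $|\delta_h f^{\epsilon_n}(t,x)|\leq \Omega(|h|)$ gives an $\epsilon$-uniform dominating function $C\Omega(|h|)/|h|^3$, integrable by hypothesis. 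Pointwise convergence of the integrand is immediate from the locally uniform convergence of $(f^{\epsilon_n},\nabla_x f^{\epsilon_n})$, so the right-hand side of \eqref{e:epsilonsystem} at $(t,x)$ converges to the corresponding integral in \eqref{e:fequation}. Combined with the vanishing of the viscous term this identifies the limit of $\partial_t f^{\epsilon_n}(t,x)$, while the $C^\alpha$ control on $\partial_t f^{\epsilon_n} - \epsilon_n\Delta_x f^{\epsilon_n}$ promotes this to a genuine classical time derivative of $f$.

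The chief obstacle is securing this $\epsilon$-uniform tail control on the nonlocal operator. This is exactly what the comparison-principle-based propagation of the sublinear modulus $\Omega$ from Corollary \ref{c:growthbound}, and its viscous analogue in Lemma \ref{l:epsilonaprioriestimates}, provide; the symmetrization trick then converts growth information into an integrable dominating function independent of $\epsilon$. Every other ingredient, namely compactness extraction and dominated convergence on the principal-value integral, is routine.
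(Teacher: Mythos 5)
Your proposal is correct and follows essentially the same route as the paper: extract a limit of the viscous solutions $f^{(\epsilon)}$ via the $\epsilon$-uniform a priori estimates of Lemma \ref{l:epsilonaprioriestimates} and pass to the limit in \eqref{e:epsilonsystem}. The paper states this compactness-and-limit step in a few sentences without detail, whereas you supply the supporting arguments (equicontinuity down to $t=0$ from the integrable $-\log t$ bound, and the symmetrized dominated-convergence argument for the nonlocal term), all of which are sound.
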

\begin{proof}
For every $\epsilon>0$, we have that there exists a global smooth solution $f^{(\epsilon)}$ to the $\epsilon$-regularized Muskat equation \eqref{e:epsilonsystem}.  By the a priori estimates given in Lemma \ref{l:epsilonaprioriestimates}, we have that the sequence $(f^{(\epsilon)})_{\epsilon>0}$ is precompact in $C^1_{loc}([0,\infty)\times \R^2)$.  Hence there is a sequence $\epsilon_k\to 0$ and $f\in C([0,\infty)\times \R^2)\cap C^{1,\alpha}_{loc}((0,\infty)\times \R^2)\cap L^\infty ([0,\infty); C^{1,1}(\R^2))$ such that $f^{(\epsilon_k)} \to f$ in $C^1_{loc}$.  As $f^{(\epsilon)}$ solve the $\epsilon$-system \eqref{e:epsilonsystem} with initial value $f^{(\epsilon)}(0,x) = f_0(x)$ for all $\epsilon>0$, we thus have that the limit $f$ solves the Muskat equation \eqref{e:fequation} with $f(0,x) = f_0(x)$ as well.  
\end{proof}

\begin{theorem}
Let $f_0\in \dot{W}^{1,\infty}(\R^2)$ satisfy $||\nabla_x f_0||_{L^\infty} < \displaystyle\frac{1}{\sqrt{5}}$ and the growth bounds 
\begin{equation}
\sup\limits_{x\in \R^{2}}\max\limits_{|h|\leq R} |f_0(x+h)-f_0(x)|\leq \Omega(R), \qquad \int\limits_1^\infty \frac{\Omega(R)}{R^2}dR < \infty.
\end{equation} 
Then there exists a unique classical solution $f:[0,\infty)\times \R^{2}\to \R$ to the Muskat equation \eqref{e:fequation} satisfying the previous estimates of sections \ref{s:modulusrho} and \ref{s:regularitytime}.
\end{theorem}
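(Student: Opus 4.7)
The plan is to approximate the rough initial data by smooth, compactly supported data, invoke Lemma \ref{l:smoothglobalexistence} on each approximation, and then use the uniform a priori estimates of sections \ref{s:modulusrho} and \ref{s:regularitytime} to pass to the limit. First I would construct a sequence $f_0^{(n)}\in C^\infty_c(\R^2)$ with $\|\nabla_x f_0^{(n)}\|_{L^\infty}<1/\sqrt{5}$, with growth bounds
\begin{equation*}
\sup_x \max_{|h|\leq R}|f_0^{(n)}(x+h)-f_0^{(n)}(x)|\leq \tilde\Omega(R),\qquad \int_1^\infty \tilde\Omega(R)/R^2\,dR<\infty,
\end{equation*}
where $\tilde\Omega$ is independent of $n$, and with $f_0^{(n)}\to f_0$ pointwise. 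The standard recipe $f_0^{(n)}=\chi(\cdot/n)\bigl((f_0-f_0(0))*\eta_{1/n}\bigr)$, for a fixed cutoff $\chi$ equal to $1$ on $B_1$ and $0$ outside $B_2$ and a standard mollifier $\eta_{1/n}$, works because mollification does not increase the slope and the cutoff error is controlled by $\|\nabla\chi\|_{L^\infty}\Omega(2n)/n$; since WLOG $\Omega$ is concave, the condition $\int \Omega(R)/R^2\,dR<\infty$ forces $\Omega(R)/R\to 0$, so this error vanishes as $n\to\infty$.

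For each $n$, Lemma \ref{l:smoothglobalexistence} supplies a global classical solution $f^{(n)}\in C([0,\infty)\times\R^2)\cap C^{1,\alpha}_{loc}((0,\infty)\times\R^2)\cap L^\infty([0,\infty);C^{1,1})$. The key point is that all a priori estimates proved earlier—the slope maximum principle (Proposition \ref{p:maximumprinciple}), the growth bound preservation (Corollary \ref{c:growthbound}), the $C^{1,1}$ decay $\|D_x^2 f^{(n)}\|_{L^\infty}(t)\lesssim 1/t$ (Corollary \ref{c:C11bound}), the space-time $C^{1,\alpha}$ bound (Proposition \ref{p:C1alphabound}), and the time regularity $\|\partial_t f^{(n)}\|_{L^\infty}(t)\lesssim -\log t + \int_1^\infty \tilde\Omega(R)/R^2\,dR$ (Proposition \ref{p:regularityspace})—depend only on $\|\nabla_x f_0\|_{L^\infty}$ and $\tilde\Omega$, hence are uniform in $n$. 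By Arzel\`a–Ascoli there is a subsequence (not relabeled) converging in $C^1_{loc}((0,\infty)\times\R^2)$ to a limit $f\in C^{1,\alpha}_{loc}((0,\infty)\times\R^2)\cap L^\infty_{loc}((0,\infty);C^{1,1})$ which inherits all the uniform estimates.

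To verify that $f$ solves \eqref{e:fequation} in the classical sense I would pass to the limit pointwise in the integrand at each $(t,x)$ with $t>0$. For $|h|\leq 1$, the uniform $C^{1,1}$ bound gives $|\delta_h f^{(n)}(t,x)-\nabla_x f^{(n)}(t,x)\cdot h|\lesssim |h|^2/t$, producing an integrable majorant $\lesssim |h|^{-1}/t$; for $|h|\geq 1$ the uniform growth bound gives a majorant $\lesssim \tilde\Omega(|h|)/|h|^3$. Dominated convergence then delivers the integro-differential equality for $f$. For the initial condition, the uniform time-derivative bound yields
\begin{equation*}
\|f^{(n)}(t,\cdot)-f_0^{(n)}\|_{L^\infty}\leq \int_0^t \|\partial_s f^{(n)}\|_{L^\infty}\,ds \to 0\quad \text{as } t\to 0^+,
\end{equation*}
uniformly in $n$, so passing to the limit pointwise (using $f^{(n)}\to f$ and $f_0^{(n)}\to f_0$ pointwise) gives $\|f(t,\cdot)-f_0\|_{L^\infty}\to 0$. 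Uniqueness is immediate from Corollary \ref{c:uniqueness}.

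The main obstacle is the approximation step: one must simultaneously maintain the strict slope bound $<1/\sqrt{5}$, keep the growth bound uniform in $n$ despite $f_0$ being possibly unbounded, and arrange pointwise convergence. The decay $\Omega(R)/R\to 0$, which is exactly the consequence of $\int \Omega(R)/R^2\,dR<\infty$ under concavity, is what makes the cutoff error negligible. Everything else is a standard compactness-and-limit argument, with the nontrivial $C^{1,\alpha}$ interior regularity supplied by the earlier parabolic theory.
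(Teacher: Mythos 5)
Your proposal is correct and follows essentially the same route as the paper: cut off and mollify the initial data while tracking that the slope bound and the integrably sublinear growth bound survive the approximation (using $\Omega(R)/R\to 0$), invoke Lemma \ref{l:smoothglobalexistence} for each approximant, extract a $C^1_{loc}$ limit via the uniform a priori estimates, and recover the initial condition from the integrable $-\log t$ bound on $\partial_t f$, with uniqueness from Corollary \ref{c:uniqueness}. The only cosmetic differences are that you keep a single $n$-independent modulus $\tilde\Omega$ where the paper passes to a subsequence along which the growth integrals converge, and you spell out the dominated-convergence step showing the limit solves the equation, which the paper leaves implicit.
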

\begin{proof}
Without loss of generality, assume that $f_0(0) = 0$.  In particular, this implies that 
\begin{equation}
||f_0||_{L^\infty(B_R)} \leq \Omega(R).
\end{equation}
Let $\phi\in C^{\infty}_c(B_1; [0,1])$ be a smooth cutoff function with $\phi\equiv 1$ on $B_{1/2}$.  Then for any $M>0$, we have that $f_0(x)\phi\left(\frac{x}{M}\right)\in W^{1,\infty}(\R^{d-1})$.  Furthermore whenever $|x|\leq M$ and $y\in \R^{d-1}$, we can bound the difference
\begin{equation}
\begin{split}
\bigg|f_0(x)\phi\left(\frac{x}{M}\right)- f_0(y)\phi\left(\frac{y}{M}\right)\bigg| &\leq |f_0(x)-f_0(y)| \phi\left(\frac{ y}{M}\right) + |f_0(x) |  \bigg|\phi\left(\frac{ x}{M}\right) - \phi\left(\frac{ y}{M}\right)\bigg| 
\\&\leq \Omega(|x-y|) +  \Omega(M) \min\left\{1, C\frac{ |x-y|}{M}\right\}.
\end{split}
\end{equation}
Thus as $f_0(x)\phi\left(\frac{ x}{M}\right) \equiv 0$ outside of $B_{M}$, we have that for all $x,y\in \R^{d-1}$
\begin{equation}\label{e:Omegaepsilon}
\bigg|f_0(x)\phi\left(\frac{x}{M}\right)- f_0(y)\phi\left(\frac{y}{M}\right)\bigg| \leq  \Omega(|x-y|) +  \Omega(M) \min\left\{1, C\frac{ |x-y|}{M}\right\} =: \Omega^{(M)}(|x-y|).
\end{equation}
Note that for any fixed $\delta>0$, we have that 
\begin{equation}
\int\limits_\delta^\infty \frac{\Omega^{(M)}(R)}{R^2}dR = \int\limits_\delta^\infty \frac{\Omega(R)}{R^2}dR + C\frac{\Omega(M)}{M}\left(1+\log\left(\frac{M}{C \delta}\right)\right).
\end{equation}
As $\Omega$ is integrably sublinear, we have that 
\begin{equation}
\int\limits_1^\infty \frac{\Omega(R)}{R^2}dR < \infty \qquad \Rightarrow \qquad \liminf\limits_{R\to \infty} \frac{\Omega(R)\log(R)}{R} = 0.
\end{equation}
Thus it follows that 
\begin{equation}
\liminf\limits_{M\to \infty} \int\limits_\delta^\infty \frac{\Omega^{(M)}(R)}{R^2}dR = \int\limits_\delta^\infty \frac{\Omega(R)}{R^2}dR .
\end{equation}
for any fixed $\delta>0$.  Furthermore, the same reasoning shows
\begin{equation}\label{e:epsilonslopebound}
\liminf\limits_{M\to \infty} ||\nabla_x (f_0(\cdot)\phi\left( \frac{\cdot}{M}\right))||_{L^\infty} \leq ||\nabla_x f_0||_{L^\infty} + \liminf\limits_{M\to \infty} C\frac{ \Omega(M)}{M} = ||\nabla_x f_0||_{L^\infty}.
\end{equation}

Now let $\eta\in C^{\infty}_c(B_1; [0,\infty))$ be a smooth mollifier with $\int \eta dx  = 1$.  For any $M>0$, let $\eta_M(x) =M^{2}\eta\left(Mx\right)$.  Define the smooth initial data 
\begin{equation}
f_0^{(M)}(x) =\left[f_0(\cdot)\phi\left( \frac{\cdot}{M}\right)\right]*\eta_M(x)  \in C^{\infty}_c(\R^{2}).
\end{equation}
Then by basic properties of mollifiers, \eqref{e:Omegaepsilon} and \eqref{e:epsilonslopebound} imply that 
\begin{equation}
|f^{(M)}_0(x) - f_0^{(M)}(y)|\leq \Omega^{(M)}(|x-y|), \qquad ||\nabla_x f_0^{(M)}||_{L^\infty} \leq ||\nabla_x \left[f_0(\cdot)\phi\left(\frac{\cdot}{M}\right)\right]||_{L^\infty}.
\end{equation}

Thus for any $M>0$, we have by Lemma \ref{l:smoothglobalexistence} that there exists a classical solution $f^{(M)}: [0,\infty)\times \R^{2}\to \R$ to the Muskat equation \eqref{e:fequation} with initial data $f_0^{(M)}$.  Passing to a subsequence $M_k\to \infty$ we can assume that 
\begin{equation}
||\nabla_x f_0^{(M_k)}||_{L^\infty} \to ||\nabla_x f_0||_{L^\infty}, \qquad \int\limits_{\delta}^\infty  \frac{\Omega^{(M_k)}(R)}{R^2}dR \to \int\limits_{\delta}^\infty \frac{\Omega(R)}{R^2}dR, \qquad \forall \delta>0.
\end{equation}
It thus follows by the a priori estimates of Sections \ref{s:modulusrho} and \ref{s:regularitytime} that the sequence $(f^{(M_k)}
)_{k}$ is uniformly bounded in $C([0,\infty)\times \R^2)\cap C^{1,\alpha}_{loc}((0,\infty)\times \R^2)\cap L^\infty_{loc}((0,\infty); C^{1,1}(\R^2))$.    Hence by passing to a further subsequence, we have that there exists a function $f:(0,\infty)\times \R^{2}$ solving the Muskat equation \eqref{e:fequation} such that 
\begin{equation}
f^{(M_k)} \to f \ \text{ in }C^{1}_{loc}((0,\infty)\times \R^{2}).
\end{equation}
Furthermore, this solution $f$ will satisfy all the previous a priori estimates.  By Proposition \ref{p:regularityspace} we have that for $k$ sufficiently large and $t<1/2$, 
\begin{equation}
|f_0^{(M_k)}(x) - f^{(M_k)}(t,x)|\leq Ct (-\log(t) + \int\limits_1^\infty \frac{\Omega(R)}{R^2}dR ), 
\end{equation}
it follows that 
\begin{equation}
\begin{split}
|f_0(x) - f(t,x)| &\leq Ct \left(-\log(t) + \int\limits_1^\infty \frac{\Omega(R)}{R^2}dR \right) + \liminf\limits_{k\to \infty } |f_0(x)-f_0^{(M_k)}(x)| + |f^{(M_k)}(t,x) - f(t,x)| 
\\&= Ct \left(-\log(t) + \int\limits_1^\infty \frac{\Omega(R)}{R^2}dR \right).
\end{split}
\end{equation}
Thus 
\begin{equation}
\lim\limits_{t\to 0} ||f_0(x) - f(t,x)||_{L^\infty} = 0,
\end{equation}
so $f:[0,\infty)\times \R^{2}\to \R$ is a solution to the Muskat problem with initial data $f_0$.  

The estimates proven in previous propositions are enough to guarantee that $f$ is a classical solution of the Muskat equation in the sense of definition \ref{d:classical}, with $||\nabla_x f||_{L^\infty([0,\infty)\times \R^{2})} = ||\nabla_x f_0||_{L^\infty(\R^{2})}<\displaystyle\frac{1}{\sqrt{5}}$.  So by Corollary \ref{c:uniqueness} $f$ is the unique such solution.  

\end{proof}

\section*{Acknowledgements}  
The author was partially supported by an NSF postdoctoral fellowship, NSF DMS 1902750.

\bibliography{Muskat-References}{}
\bibliographystyle{alpha}

\end{document}